\newtheorem{theorem}{Theorem}[section]
\newtheorem{lemma}[theorem]{Lemma}
\newtheorem{proposition}[theorem]{Proposition}
\newtheorem{remark}[theorem]{Remark}
\numberwithin{equation}{section}
\title[New solutions for the Lane-Emden problem in planar domains]{New solutions for the Lane-Emden problem\\ in planar domains}
\author[L. Battaglia]{Luca Battaglia}
\address{Luca Battaglia, Dipartimento di Matematica e Fisica, Universit\`a degli Studi Roma Tre, Via della Vasca Navale 84, 00146 Roma (Italy)}
\email{luca.battaglia@uniroma3.it}
\author[I. Ianni]{Isabella Ianni}
\address{Isabella Ianni, Dipartimento di Scienze di Base e Applicate per l'Ingegneria, Sapienza Universit\`a di Roma, Via Antonio Scarpa 10, 00161 Roma (Italy)}
\email{isabella.ianni@uniroma1.it}
\author[A. Pistoia]{Angela Pistoia}
\address{Angela Pistoia, Dipartimento di Scienze di Base e Applicate per l'Ingegneria, Sapienza Universit\`a di Roma, Via Antonio Scarpa 10, 00161 Roma (Italy)}
\email{angela.pistoia@uniroma1.it}
\begin{document}

\everymath{\displaystyle}
\allowdisplaybreaks

\date\today
\subjclass[2020]{35J25,35B40,35B44}

\keywords{Lane-Emden problem, concentration phenomena, Ljapunov-Schmidt reduction}

\thanks{ L. Battaglia is partially supported by MUR-PRIN-2022AKNSE4 ``Variational and Analytical aspects of Geometric PDEs''. I. Ianni and A. Pistoia are partially supported by the MUR-PRIN-20227HX33Z ``Pattern formation in nonlinear phenomena''. L. Battaglia and A. Pistoia are also partially supported by INDAM-GNAMPA project ``Problemi di doppia curvatura su varietà a bordo e legami con le EDP di tipo ellittico''.
{L. Battaglia and I. Ianni are also partially supported by the INDAM-GNAMPA project ``Proprietà qualitative delle soluzioni di equazioni ellittiche''. }
}

\begin{abstract}
We consider the Lane-Emden problem
$$-\Delta u=|u|^{p-1} u\ \hbox{in}\ \Omega,\ u=0\ \hbox{on} \ \partial\Omega,$$
where $\Omega\subset\mathbb R^2$ is a smooth bounded domain. When the exponent $p$ is large, the existence and multiplicity of solutions strongly depend on the geometric properties of the domain, which also deeply affect their qualitative behaviour. Remarkably, a wide variety of solutions, both positive and sign-changing, have been found when $p$ is sufficiently large. In this paper, we focus on this topic and find new sign-changing solutions that exhibit an unexpected concentration phenomenon as $p$ approaches $+\infty.$
\end{abstract}

\maketitle\

\section{Introduction}\
We consider the Lane-Emden problem
\begin{equation}
\label{eqp}
 -\Delta u=|u|^{p-1}u\ \hbox{in}\ \Omega,\quad u=0\ \hbox{on}\ \partial\Omega.
 \end{equation}
where $\Omega\subset\mathbb R^2$ is a smooth bounded domain and $p>1$.\\

The existence of both positive and sign-changing solutions for \eqref{eqp} may be found by standard variational methods. Despite the seemingly simple appearance of this equation, the set of solutions can be very rich. In particular, when the exponent $p$ is large the existence and multiplicity of solutions strongly depends on the geometric properties of the domain which also deeply affect their qualitative behaviour. Remarkably, a wide variety of solutions both positive and sign-changing to \eqref{eqp} can be found when $p$ is sufficiently large. In this paper we focus on this topic and we find a new sign-changing solution which exhibit an unexpected concentration phenomena when $p$ approaches $+\infty.$
\\


Recently in \cite{DeMarchisIanniPacellaJFPTA} the behavior of any positive solution $u$ of \eqref{eqp}, in any smooth bounded domain $\Omega$, has been fully characterized when $p$ is large enough, under uniform energy bounds. Indeed in \cite{DeMarchisIanniPacellaJFPTA} (see also \cite{DeMarchisGrossiIanniPacellaLinfty,Thizy, DeMarchisIanniPacellaJEMS, AdimurthiGrossi,RenWeiTAMS1994,RenWeiPAMS1996} for partial results) it has been shown that when $p\rightarrow +\infty$ the positive solutions necessarily concentrate at a finite number of interior points in $\Omega$ and vanish locally uniformly outside the
concentration set. Around each concentration point moreover, the local maximum value of the solution converges to $\sqrt e$ (we stress that positive solutions are also uniformly bounded in $p$, see \cite{KamburovSirakov}), the concentration is simple and the asymptotic profile of the solution (suitably rescaled around the concentration point) is given by the \emph{bubble} $U$ which is a radial solution of the Liouville equation $-\Delta U=e^U$ in the whole $\mathbb R^2$. 
{Furthermore, if the solution concentrates at the $\kappa$ points
$ \xi_1,\dots,\xi_\kappa$, called {\em peaks}, then (see \cite{DeMarchisIanniPacellaJFPTA}) the total energy of the solution converges to $\kappa\cdot8\pi e$ and $(\xi_1,\dots,\xi_\kappa)$ must be a critical point of the {\em Kirchoff-Routh} function 
\begin{equation}\label{kirko}\mathscr K(x_1,\dots,x_\kappa):=\sum\limits_i
\Lambda_i^2 H(x_i,x_i)-\sum\limits_{i\not=j}\Lambda_i\Lambda_j G(x_i,x_j),\end{equation} 
where all the parameters $\Lambda_i$'s are equal to $+1.$ Here $H(x,x)$ is the Robin function, being
 {$H(x,y)$} the regular part of the Green's function of $-\Delta$ with Dirichlet boundary conditions, i.e. {$G(x,y)=-\frac1{2\pi}\ln {|x-y|}-H(x,y)$}.\\
Conversely, in \cite{EspositoMussoPistoiaJDE2006} the authors proved that if $(\xi_1,\dots,\xi_\kappa)$ is a {non-degenerate critical point} of the {Kirchoff-Routh} function
then if $p$ is large enough the problem \eqref{eqp} has a solution which concentrate at $\xi_1,\dots,\xi_\kappa$. It is worthwhile to point out that for generic domains $\Omega$ the {Kirchoff-Routh} function is a Morse function (see \cite{bartsch-miche-pis,miche-pis}).
Therefore, the existence of concentrating positive solutions turns to be linked with the existence of critical points of the {Kirchoff-Routh} function. By \cite{grossi-takahashi} we deduce that in a convex domain there are no $\kappa-$peak solutions whenever $\kappa\ge2$, and furthermore the (one-peak) solution is unique for large $p$ (see \cite{DeMarchisGrossiIanniPacella, GrossiIanniLuoYan,LinMANMATH1994}). On the other hand if the domain is not simply connected by \cite{delpino-kow-musso} we deduce that for any $\kappa\ge1$ it does exist a $\kappa-$peak solution. By \cite{egp} we also deduce that there exists a $\kappa-$peaks solution in a simply connected domain whose shape is a $\kappa-$dumbell (namely it is obtained by connecting $\kappa$ disjoint domains with small channels). }
\\

Concerning sign-changing solutions, it is known (see \cite{DeMarchisIanniPacellaJEMS}) that also in this case, under uniform energy bounds in $p$, nodal solutions are uniformly bounded in
$p$, concentrate at a finite number of points and converge to zero locally uniformly outside the
concentration set as $p\rightarrow +\infty$, although a complete characterization of the concentration behavior is not yet available. In particular, the concentration may be not simple, and different phenomena may appear in the limit as $p\rightarrow +\infty$, depending on the asymptotic behavior of the nodal line of the solution (see \cite{GrossiGrumiauPacellaRadial, IanniSaldana, GrossiGrumiauPacellaLowEnergy, DeMarchisIanniPacellaJEMS}). \\

Examples of sign-changing solutions with simple concentration, positive at an interior point and negative at another, can be found in \cite{ GrossiGrumiauPacellaLowEnergy}, where it is shown that the asymptotic profile (after scaling) around each concentration point is again given by the radial bubble $U$, each bubble carrying the same amount of energy, that is $8\pi e$. Moreover, in \cite{EspositoMussoPistoiaPLMS}, the authors build 
solutions concentrating at $\kappa$ different points $\xi_1,\dots,\xi_\kappa$ provided that $(\xi_1,\dots,\xi_\kappa)$ is a non-degenerate critical point of the {Kirchoff-Routh} function \eqref{kirko} when the $\Lambda_i$'s are equal to $\pm1$. More precisely, if $\Lambda_i=+1$, the peak $\xi_i$ is positive, whereas if $\Lambda_i=-1$, the peak $\xi_i$ is negative. In particular, they prove that in any domain there exists at least two pairs of sign-changing solutions, each with one positive and one negative peak. Multiplicity of solutions with peaks of different signs has been obtained in \cite{bartsch-pistoia-weth}. In particular, if the domain is axially symmetric for any $\kappa\ge1$, there exists a sign-changing solution with $\kappa$ negative peaks 
 alternating with $\kappa$ positive peaks. All the solutions considered above have {\em simple and isolated} peaks.
 \\
 
 A different asymptotic behavior belongs instead to the nodal radial solutions. In \cite{IanniSaldana} (see also \cite{GrossiGrumiauPacellaRadial}) it has been proved that, when $\Omega$ is a ball, any radial sign-changing solution concentrates only at the origin and the concentration is \emph{non-simple}. Indeed a \emph{tower} of $\kappa$ \emph{different bubbles} with alternating sign and all centered at the origin appears, $\kappa$ being the number of the nodal regions of the solution. Specifically, all the $\kappa-1$ nodal circles shrink to zero at diminishing speeds from the innermost circle to the outermost, and the restriction of the solution to each nodal region, suitably rescaled, converges to a radial solution $U_i$ of the singular Liouville equation
 $-\Delta U_i=|x|^{c_i}e^{U_i} $ in $\mathbb R^2$, where $c_i$, $i=1,\ldots, \kappa$, are explicitly given (and $c_0=0$, namely $U_0=U$). Hence the solution looks like the superposition of $k$ different bubbles $U_i$, with alternating signs, concentrating at the origin at different scales as $p\rightarrow +\infty$. This \emph{tower of bubbles} phenomenon happens also in non-radial cases: in \cite{DeMarchisIanniPacellaJEMS} a tower of two bubbles for solutions in symmetric domains more general than balls are shown. Recently in \cite{pi-ric}, the authors construct solutions to \eqref{eqp} for $p$ large enough whose profile resembles a tower of $\kappa$ bubbles, for any $\kappa\geq 2$, with alternating sign and all centered at the origin, in a more general but still symmetric domain. We emphasize that in all these examples, the origin is an {\em isolated and non-simple} peak. \\

The question whether there are other types of concentration phenomena for sign-changing solutions of \eqref{eqp} different from the tower of bubbles, naturally arises. This question is still unanswered, so far.\\

Recently, in \cite{GladialiIanni} in the case when $\Omega$ is a ball, unexpected sign-changing solutions are found which are 
invariant by the action of the discrete group of symmetries given by the reflection with respect to an axis and the rotations of angle $\frac{2\pi}{k}$, with $k=3,4,5$. These solutions are non-radial, and their nodal line does not intersect the boundary of the domain, hence they have been called \emph{quasi-radial}. Numerical approximations later done in \cite{FazekasPacellaPlum} seem to suggest that these solutions concentrate only at the center of the ball in the limit as $p\rightarrow +\infty$, but they should not behave like a tower of bubbles. {Actually, the origin looks like a \emph{non-simple} and \emph{non-isolated} concentration point. } More precisely the profile of these solutions could be the one of a \emph{cluster of bubbles}. 
The expression \emph{cluster of bubbles} is commonly referred to a superposition of bubbles centered and concentrating at a finite number of different points, when these points collapse towards each other, with a speed of collision slower than the speeds of concentration.\\ 

This naturally leads to the question:

{\em is it possible to find a solution to \eqref{eqp} whose profile resembles a \underline{cluster of bubbles}?}\\

In this paper we give a positive answer to this open question. 
Indeed we build sign-changing solutions to \eqref{eqp}
 having a \emph{non-simple} concentration point in $\Omega$ (which we may assume to be the origin w.l.g.) and vanishing elsewhere in the limit as $p\rightarrow +\infty$, and whose asymptotic behavior is the one of a \emph{cluster of bubbles}.
Specifically the solutions that we found look like the superposition of two different profiles (at a first order): one standard radial bubble $U$ concentrating positively at the origin, and a second \emph{different profile}, non-radial but with the discrete $k$-symmetry around the origin itself, concentrating negatively at $k$ different symmetric points with a slower speed, furthermore these $k$ points collapse towards the origin with a speed which is slower than both the speeds of concentration of the two profiles.
We obtain this result under some symmetry assumptions on the domain $\Omega$ (which keep the origin fixed) and with a natural restriction on $k$.\\

In order to state it rigorously 
let us assume $0\in\Omega$ and $\Omega$ to be invariant with respect to reflection across the $x_1$ axis:
\begin{equation}\label{firstAssumptionOmega}x=(x_1,x_2)\in\Omega\qquad\iff\qquad\overline x:=(x_1,-x_2)\in\Omega.\end{equation}
We also ask $\Omega$ to be a {\em $k$-symmetric domain}, for some integer $k$, that is invariant under a rotation of angle $\frac{2\pi}k$ with respect to the origin:
\begin{equation}\label{secondAssumptionOmega}x\in\Omega\,\,\,\,\,\iff\,\,\,\,\,e^{\mathrm i\frac{2\pi}k}x:=\left(x_1\cos\frac{2\pi}k-x_2\sin\frac{2\pi}k,x_1\sin\frac{2\pi}k+x_2\cos\frac{2\pi}k\right)\in\Omega.\end{equation}
Obviously the ball satisfies both the assumption \eqref{firstAssumptionOmega} and \eqref{secondAssumptionOmega}.\\
Let us also introduce the following space of {\em $k-$symmetric functions}:
\begin{equation}\label{def:H10sim}H^1_{0,k}(\Omega):=\left\{u\in H^1_0(\Omega):\,u(x)=u\left(\overline x\right)=u\left(e^{\mathrm i\frac{2\pi}k}x\right)\,\mbox{for a.e. }x\in\Omega\right\}.\end{equation}

 Our main result reads as follows.

\begin{theorem}\label{teop} Let $\Omega\subset\mathbb R^2$ be a smooth bounded domain such that $0\in\Omega$ which satisfies assumption \eqref{firstAssumptionOmega}. Assume $k=4,5.$ If $\Omega$ is $k-$symmetric (see \eqref{secondAssumptionOmega}), then there exists $p_k>1$ such that, for $p>p_k$ there exists a nodal, $k$-symmetric solution $u_{k,p}\in H^1_{0,k}(\Omega)$ to \eqref{eqp} such that:
\begin{equation}\label{solutionInTheorem}u_{k,p}=\tau_{p}\left(PU_{1,p}-\eta_pPU_{2,p}\right)+\ o\left(\frac{1}{p}\right) \quad\mbox{ in } {L^{\infty}_{loc}(\overline\Omega\setminus 0)}\end{equation}
as $p\rightarrow +\infty$, where $PU_{1,p}$ and $PU_{2,p}$ are the projections on $H^{1}_{0}(\Omega)$ (see \eqref{proie}) of
\[U_{1,p}(x):=\log \frac{8\alpha_{p}^2}{\left(|x|^2+\alpha_{p}^2\right)^2},\ \hbox{which solves}\ -\Delta U_{1,p}=e^{U_{1,p}}\ \hbox{in}\ \mathbb R^2 \]
and \[U_{2,p}(x):=\log \frac{8k^2\beta_{p}^{2k}}{\left(\left|x^k-\rho_{p}^k\right|^2+\beta_{p}^{2k}\right)^2},\ \hbox{which solves}\ -\Delta U_{2,p}=|x|^{2k-2}e^{U_{2,p}}\ \hbox{in}\ \mathbb R^2. \]
Here we agree that the point $\rho_p^k:=\left(\rho_p^k,0\right)\in\mathbb C$. All the real parameters, $\eta_p$,
$\tau_{p}$, $\alpha_{p}$, $\beta_{p}$, $\rho_{p}$ are positive and as $p\rightarrow +\infty$ satisfy
$$\tau_{p}\sim \displaystyle\frac{ t_k}p,\quad \eta_p\sim \eta_{\infty,k}:=\frac{2}{k-1},$$ $$\alpha_{p}\sim A_ke^{- \displaystyle{( a_k+o(1))}p},
 \quad\beta_{p}\sim B_ke^{- \displaystyle{( b_k+o(1))}p}
 \ \hbox{ and }\ 
 \rho_{p}\sim R_ke^{-\displaystyle {( r_k+o(1))}p},
 $$
 where $ A_k,$ $ B_k$ and $ R_k$ are positive constants which only depend on $k$, and \\
 \begin{equation}\label{parax}\begin{aligned}& 
 t_k:= 
 \sqrt e \left( \frac{2}{k-1}\right)^{-\frac{4k}{(k+1)^2}}\\
 & a_k:= 
\frac14\left(1+\frac{8k}{(k+1)^2}\ln\frac{k-1}2\right)\\
 & b_k:=
\frac1{4k}\left(1+2\frac{(k-1)^2}{(k+1)^2}\ln\frac{k-1}2\right)\\
 & r_k:=
 \frac{k-1}{(k+1)^2}\ln\frac{k-1}2.\end{aligned}\end{equation}
 Moreover, as $p\to+\infty$, the positive part and the negative part of $u_{k,p}$ satisfy
 $$
p\int_\Omega{(u_{k,p}^+)}^{p+1}\sim 8\pi e\left(\frac{k-1}2\right)^\frac{8k}{(k+1)^2}\quad \hbox{and}\quad p\int_\Omega(u_{k,p}^-)^{p+1}\sim 8\pi ek\left(\frac2{k-1}\right)^\frac{2(k-1)^2}{(k+1)^2}.$$
Finally
$$p|u_{k,p}|^{p+1} \to0\qquad\mbox{in }L^\infty_{\mathrm{loc}}\left(\overline\Omega\setminus\{0\}\right).$$
\end{theorem}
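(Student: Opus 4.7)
The plan is to perform a Lyapunov-Schmidt finite dimensional reduction in the symmetric space $H^1_{0,k}(\Omega)$. The ansatz is the two-profile approximation $W_p := \tau_p\bigl(PU_{1,p}-\eta_p PU_{2,p}\bigr)$, where the five real parameters $\alpha_p,\beta_p,\rho_p,\eta_p,\tau_p$ are treated as unknowns lying in small neighbourhoods of the claimed asymptotic values. Notice first that $U_{2,p}$ is engineered so that under the complex substitution $w=x^k$ it becomes a standard Liouville bubble centred at $\rho_p^k$; consequently, near each of the $k$ roots $\xi_j=\rho_p\,e^{\mathrm i 2\pi j/k}$ of $\rho_p^k$ the function $U_{2,p}$ is well-approximated by a standard bubble with concentration scale $\beta_p^k/(k\rho_p^{k-1})$. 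The $k$-symmetry is crucial both to encode the $k$ negative peaks into a single function and to kill out translation kernels, turning a potentially large symmetry-free reduction into a three-parameter problem.

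First I would write $u=W_p+\phi$ with $\phi\in H^1_{0,k}(\Omega)$ small, and estimate the error $R_p:=-\Delta W_p-|W_p|^{p-1}W_p$ in a weighted $L^q(\Omega)$ norm. Using the Green-function expansion of the projections $PU_{i,p}$ (with $G$ and the Robin part $H$ of the Green function) and the explicit profiles, the error splits into local contributions at each peak, where the profile equations nearly annihilate the nonlinearity, and a global interaction term controlled by $H(0,0)$, $H(\xi_j,\xi_j)$ and $G(0,\xi_j)$. Next I would analyse the linearised operator $L_p\phi:=-\Delta\phi-p|W_p|^{p-1}\phi$ on $H^1_{0,k}(\Omega)$. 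The $\mathbb{R}^2$-limit kernel contains scaling modes $\partial_\alpha U_{1,p}$ and $\partial_\beta U_{2,p}$ and translation modes, but the $k$-symmetry eliminates all translation modes at $0$ and all tangential/rotational modes at the $\xi_j$, leaving only the radial direction $\partial_\rho U_{2,p}$. Working orthogonally to the three-dimensional span of these surviving kernel functions, $L_p$ is uniformly invertible, and a standard contraction on the nonlinear remainder yields $\phi_p$ of size $o(1/p)$, depending smoothly on $(\alpha_p,\beta_p,\rho_p,\eta_p,\tau_p)$.

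The bifurcation equation is then the reduced system obtained by pairing the residual with the three kernel directions and imposing the normalisation identities that determine $\tau_p$ and $\eta_p$ (the former fixes the scale of $W_p$ against the $L^\infty$ behaviour, the latter balances positive and negative mass). Expanding each reduced equation in powers of $1/p$ using the bubble integrals on $\mathbb R^2$ and the expansions of $PU_{i,p}$ via $G$ and $H$, the leading terms give a closed algebraic system in $(a_k,b_k,r_k,\eta_{\infty,k},t_k)$. The mass-balance condition forces $\eta_{\infty,k}=2/(k-1)$; the two scaling identities (one per profile) pin down $a_k$ and $b_k$ as explicit multiples of $\ln\tfrac{k-1}{2}$; and the radial condition for $\rho_p$ produces $r_k=\tfrac{k-1}{(k+1)^2}\ln\tfrac{k-1}{2}$. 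The energy asymptotics for $u_{k,p}^\pm$ and the vanishing $p|u_{k,p}|^{p+1}\to 0$ away from the origin then follow from the ansatz together with direct evaluation of the bubble integrals.

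The hardest point is the nested hierarchy of scales $\alpha_p\ll\beta_p\ll\rho_p\to 0$, all exponentially small in $p$ with rates that are themselves part of the unknown data: this self-consistency makes the reduced system strongly coupled and delicate, since leading and subleading interactions must be balanced against each other rather than against explicit forcing terms. The restriction $k=4,5$ enters exactly here: it is the range in which the algebraic system admits a genuine cluster solution (i.e.\ $r_k>0$, $b_k>r_k$, $a_k>b_k$) with the correct sign structure for $\eta_{\infty,k}$, ensuring both that $\rho_p$ does not collapse onto $\beta_p$ and that the reduced map is non-degenerate in all three directions. Verifying this uniform non-degeneracy, combined with the control of the remainder $\phi_p$ in the presence of three distinct exponential scales, is the technical core of the argument.
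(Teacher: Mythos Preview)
Your outline captures the broad architecture (Lyapunov--Schmidt in $H^1_{0,k}$, identification of the surviving kernel, reduced finite-dimensional problem), but there is a concrete gap in the ansatz that would make the contraction step fail.

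\textbf{The first-order ansatz is not accurate enough.} With $W_p=\tau_p(PU_{1,p}-\eta_pPU_{2,p})$ alone, the error near the positive peak is, after rescaling $x=\alpha y$,
\[
\mathscr E(\alpha y)=\frac{\tau}{\alpha^2}\Bigl(-e^{U(y)}+\Bigl(1+\tfrac{U(y)}{p}+\cdots\Bigr)^p\Bigr)=\frac{\tau}{\alpha^2}e^{U(y)}\Bigl(-\tfrac{U(y)^2}{2p}+O\bigl(\tfrac{\log^4}{p^2}\bigr)\Bigr),
\]
so $\|\mathscr E\|_*=O(1/p^2)$ only. Since the linear estimate loses a factor $p$ (i.e.\ $\|\phi\|_\infty\le Cp\|h\|_*$), this gives at best $\|\phi\|_\infty=O(1/p)$, and then the quadratic remainder satisfies $\|\mathscr N(\phi)\|_*\le Cp\|\phi\|_\infty^2=O(1/p)$, which is \emph{larger} than $\|\mathscr E\|_*$. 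The fixed-point map is therefore not a contraction on any ball of radius $o(1)$, and your claimed bound $\phi_p=o(1/p)$ cannot be obtained this way. The paper resolves this by refining the ansatz to third order,
\[
\Upsilon=\tau\Bigl(PU_1+\tfrac{PV_1}{p}+\tfrac{PW_1}{p^2}\Bigr)-\tau\eta\Bigl(PU_2+\tfrac{PV_2}{p}+\tfrac{PW_2}{p^2}\Bigr),
\]
where $V,W$ solve $\Delta V+e^UV=e^Uf_1(U)$ and $\Delta W+e^UW=e^Uf_2(U,V)$ with $f_1,f_2$ chosen precisely to cancel the $1/p$ and $1/p^2$ terms in the Taylor expansion of $(1+U/p+\cdots)^p$. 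This pushes the error down to $\|\mathscr E\|_*=O(1/p^4)$, after which the contraction yields $\|\phi\|_\infty=O(1/p^3)$.

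\textbf{The reduction is one-dimensional, not three-dimensional.} Rather than keeping the two scaling modes $\partial_\alpha,\partial_\beta$ as orthogonality constraints, the paper fixes $\alpha,\beta,\rho,\tau$ \emph{a priori} as explicit functions of $\eta$ through four algebraic relations (the system \eqref{param} in the paper), chosen so that the $Z_0$-components of the rescaled limit vanish automatically; this is verified via test functions $PQ_1,PQ_2$ built from explicit solutions of $\Delta S+e^US=e^UZ_0$. Only the single constraint orthogonal to $\widetilde Z=Z_{1,2}$ (your $\partial_\rho$ direction) survives, and the reduced problem is solved by minimising the one-variable energy $F_p(\eta)\sim\frac{4\pi}{ep}\varphi_k(\eta)$, whose strict minimum at $\eta_{\infty,k}=\frac{2}{k-1}$ gives the critical point. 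Your three-dimensional reduced system is not wrong in spirit, but without the refined ansatz you cannot expand it accurately enough to locate the solution.
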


Above we agree that $f\sim g$ means $\lim\limits_{p\to+\infty}\frac{f(p)}{g(p)}\in\mathbb R\setminus\{0\}.$\\

The solutions described in the theorem are rather delicate to be captured, indeed the proof of Theorem \ref{teop} relies on a particularly challenging Ljapunov-Schmidt procedure. A first issue is the choice of the correct ansatz for the approximation. In the proof, we need to add to the first order term given by the two bubbles $U_{1,p}$ and $U_{2,p}$ in \eqref{solutionInTheorem}, two extra terms up to the third order, involving four other profiles $V_{1,p}$, $V_{2,p}$, $W_{1,p}$, $W_{2,p}$ (see Subsection \ref{subsection:ansatzTotal} for the complete ansatz). This higher order expansion is essentially due to the fact that the pure power $u^p$ is not close enough to the exponential $e^u$ as $p\to +\infty$. Secondly, for the error to be small, we must precisely select all the parameters in the involved profiles (see \eqref{parax}), and this turns out to be quite subtle (see Section \ref{3} for the details). As a matter of fact, the estimate of the error size is
 the most delicate and technical part of the paper, for which we must distinguish among the regime close to the positive peak which is the origin, the one close to the $k$ negative peaks, which are collapsing to the origin, and far away from the peaks. 
\\

Now, let us make some comments on the result.\\

\begin{remark} \rm The parameters satisfy \eqref{parax} and so it is straightforward to check that the choice $k=4$ or $k=5$ 
ensures \[a_k> b_k> r_k>0.\] Such a condition implies that the concentration rate $\alpha_p$ of the positive bubble is faster than the concentration rate $\beta_p$ of the negative bubble, and both are faster than the rate $\rho_p$ at which the negative peaks (i.e. the $k$th-roots $z_{j,p}:=\rho_pe^{i\frac{2\pi j}{k}}\in\mathbb C$, $j=1,\ldots, k$ of the complex number $\rho_p^k$) are approaching the positive one (i.e. the origin). \end{remark}

\begin{remark}\rm 
 The clustering behaviour requires in particular that {the speed at which the negative peaks approach the positive one is slower than the rate of concentration of the negative bubble}, i.e. in \eqref{parax} $ b_k> r_k>0$, namely
$$ 
 \frac12-\frac{k-1}{k+1}\log \frac{k-1}2>0 \quad \hbox{ and }\quad (k-1)\ln{k-1\over2}>0.
 $$
 A direct computation shows that this is possible whenever
 $3<k< k_0$ where $k_0\sim 5.187$ is the only solution to the equation
\begin{equation}\label{kzero}
\frac12-\frac{k_0-1}{k_0+1}\log \frac{k_0-1}2=0.
\end{equation}
This forces the choice $k=4, 5$ which complies with the result found in \cite{GladialiIanni}. We notice that in \cite{GladialiIanni} the case $k=3$ is also considered, in the next remark we comment on the reason why this case is not covered by Theorem \ref{teop}. \\
We strongly believe that if $k\ge6$ the clustering phenomena do not occur. The proof should require a subtle understanding of the interactions between the bubbling profiles, and rely on sharp pointwise estimates around all the peaks. This subject could be a challenging topic for future research.\end{remark}

 \begin{remark} \rm In this theorem we consider only the values $k=4,5$. It is immediate to verify that if $k=3$, then $r_k=0$ in \eqref{parax}, which is not possible. 
 We strongly believe that the case $k=3$ can be managed, provided that all parameters are chosen up to the second order. 
 Clearly, this makes the whole reduction procedure much more difficult. However,
 we believe that all the heavy technicalities required to address this problem could also be used to produce solutions with a non-simple blow-up point for the widely studied sinh-Poisson equation, as predicted in \cite{esposito-wei} and eventually not found due to an error in the reduced energy (i.e. the leading term vanishes).
All this could be an interesting line of research.
\end{remark}
 
\begin{remark} \rm
The total energy of the solutions found in Theorem \ref{teop} as $p\to+\infty$ satisfies
$$p\int_\Omega|u_{k,p}|^{p+1} \sim 8\pi e\left(\frac{k+1}{k-1}\right)^2\left(\frac{k-1}2\right)^\frac{8k}{(k+1)^2}=:E(k).$$
On the other hand, when $\Omega=B_1(0)$ is the unit ball, the least energy nodal radial solution $u_p$ was shown in \cite[Theorem 2.4]{IanniSaldana} (see also \cite{GrossiGrumiauPacellaRadial}) to satisfy
$$p\int_\Omega|u_p|^{p+1}\sim E(k_0),$$
with $k_0$ as in \eqref{kzero}. Since the real function $E(k)$ has its strict maximum in $k=k_0$, Theorem \ref{teop} shows in particular that the least energy nodal $k$-symmetric solution is \emph{not} radial for $k=4,5$ for $p$ large. This is consistent with the analysis in \cite[Theorem 1.3]{GladialiIanni}.
\end{remark}

\begin{remark} \rm We conjecture that a solution 
resembling a similar {\em cluster of bubbles} as $p$ approaches $+\infty$ should exist in a general domain $\Omega$ without any symmetry assumption, with the non-simple concentration point being a possibly non-degenerate critical point of the Robin's function. The proof is undoubtedly challenging and cannot be immediately deduced by our result. Our symmetry assumptions simplify both the construction of the ansatz and the study of the linear theory. \end{remark}

\begin{remark} \rm
In higher dimension (i.e. 
$\Omega\subset\mathbb R^n$ and $n\ge3$), solutions to \eqref{eqp} having simple and isolated positive and/or negative blow-up peaks when the exponent $p$ approaches the critical Sobolev exponent from below, i.e. $p=(n+2)/(n-2)-\epsilon$ with $\epsilon\to 0^+,$ have been found in \cite{bahri-li-rey,bartsch-miche-pistoia,bartsch-daprile-pistoia1,bartsch-daprile-pistoia2}. Moreover, sign-changing solutions whose profile resembles 
towers of bubbles with alternating sign have been built in \cite{musso-pistoia,pistoia-weth}.
In this case, the profiles of the bubbles in the towers, as well as the energies they carry, are all the same, unlike the $2$-dimensional case (\cite{IanniSaldana,GrossiGrumiauPacellaRadial,DeMarchisIanniPacellaJEMS,pi-ric}).
The question whether, in higher dimension, there are other types of non-simple concentration phenomena for nodal solutions of \eqref{eqp}, different from both the simple blow-ups and from the tower of bubbles, for instance cluster of bubbles, naturally arises and remains still unanswered, as far as we know.\end{remark}

The paper is organized as follows.\\
In Section \ref{2}, we describe the ansatz for the solution we are looking for. As already observed, we need to add to the first order term (described in Subsection \ref{subsection:ansatzFirst}), two extra terms up to the third order (introduced in Subsection \ref{subsection:ansatzSecond}). Here we follow some ideas introduced in \cite{EspositoMussoPistoiaJDE2006,pi-ric}. The final ansatz is given in Subsection \ref{subsection:ansatzTotal}. \\
Next, in Section \ref{3} we shall compute the error size.  The central result of the section is Proposition \ref{r}. In order to derive it, we need first to 
adjust all the parameters in the ansatz in order to have a sufficiently good 
approximation of the solution; this delicate part is done in Subsection \ref{subsection:parametersineta}. Some preliminary estimates are also provided in Subsections \ref{subsection:preliminaryEta}, \ref{subsection:ansatzStime} and \ref{upversuseu}. Then, we compute the error size, first close to the positive peak in Subsection \ref{Section:errorOrigin}, then close to the negative peaks in Subsection
\ref{Section:errorRoots} and, at last, far away from the peaks in Subsection
 \ref{Section:errorFaraway}. In Subsection \ref{Section:errorTotale} we finally measure the global size of the error in an appropriate weighted norm, getting  Proposition \ref{r}. \\ The choice of the weighted norm is crucial also in the study of the linear theory, which is developed in Section \ref{4}. The main result of this section is the invertibility stated in Proposition \ref{linear}. We stress that sections \ref{3} and \ref{4} contain the most delicate and technical parts of the paper, indeed careful estimates are needed around the positive peak which is the origin, and around the negative peaks which are $k$ points that are collapsing to the origin.\\
In Section \ref{5} we complete the proof of Theorem \ref{teop}. First, via a classical contraction mapping argument (see Subsection \ref{subsection:contraction}), we reduce \eqref{eqp} to a finite dimensional problem whose solutions are critical points of a function of one real variable (see 
 Subsection \ref{subsection:reduced}), which actually has a minimum point (see Subsection \ref{subsection:minimo}). Finally, all the properties owned by the solution built in this way and listed in Theorem \ref{teop} are proved in Subsection \ref{subsection:fine}.\\

 { \em Notations.} In this paper we will use $c$ or $C$ to denote a positive constant whose value may change from line to line.
 The symbol $g= O(f)$ will be used to mean $|g|\le c|f|$ when comparing two quantities. Moreover, the symbol $g= O_{\leq 0}(f)$ means $-c|f|\le g\le 0.$

\section{The ansatz}\label{2}

{In this section we introduce the main building blocks which will be used to construct the solution we are looking for, hence make the general ansatz for the solution. We conclude the section by introducing the classical reduction process. }

\subsection{The first order term of the ansatz: the bubbles}\label{subsection:ansatzFirst}
We consider the \emph{standard bubble}
\begin{equation}\label{bubble}
U(x):=\log \frac8{\left(|x|^2+1\right)^2},
\end{equation}
which solves
\begin{equation}\label{liouvilleq}
-\Delta U(x)=e^{U(x)},\qquad x\in\mathbb R^2.
\end{equation}
We know that, for any $\alpha>0$, equation \eqref{liouvilleq} is also solved by
$$U_1(x):=U\left(\frac x\alpha\right)-2\log \alpha=\log \frac{8\alpha^2}{\left(|x|^2+\alpha^2\right)^2}.$$
We also know that, for $\beta,\rho>0,k\in\mathbb N$, the function
$$U_2(x):=U\left(\frac{x^k-\rho^k}{\beta^k}\right)+\log \left(k^2\right)-2k\log \beta=\log \frac{8k^2\beta^{2k}}{\left(\left|x^k-\rho^k\right|^2+\beta^{2k}\right)^2}$$
solves
\begin{equation}\label{liouvillesing}
-\Delta U_2(x)=|x|^{2k-2}e^{U_2(x)},\qquad x\in\mathbb R^2.
\end{equation}
Both $U_{1}$ and $U_{2}$ will be used in our ansatz for the solution, to approximate it at a first order (see Subsection \ref{subsection:ansatzTotal}).

\

Here and in the rest of the paper we use the complex notation to mean $$x^k=\left(\Re\left((x_1+\mathrm ix_2)^k\right),\Im\left((x_1+\mathrm ix_2)^k\right)\right)\in\mathbb R^2,\qquad\mbox{for }x=(x_1,x_2)\in\mathbb R^2.$$
We also agree to identify any positive real number $t>0$ with its corresponding point on the positive real half-line
$$t=t+\mathrm i\cdot0=(t,0)\in\mathbb R^2.$$
For any $y\in\mathbb R^2$, we will write $\sqrt[k]y$ to mean \emph{any} root $x\in\mathbb R^2$ of the equation $x^k=y$. 
It is useful to point out that since we will consider symmetric functions $u\in H^1_{0,k}(\Omega)$ (see the definition in \eqref{def:H10sim}) in particular 
$u(x)=u(\widetilde x)$ if $x^{k}={\widetilde x}^{k}$, hence the choice between one root and another will make no difference.

\subsection{A refinement of the ansatz: the second and the third order terms}\label{subsection:ansatzSecond}

In order to have an error term sufficiently small {(which will be done in the next subsection)} we need to get a better approximation of the solution, namely we need to add some extra lower-order terms in the spirit of \cite{EspositoMussoPistoiaJDE2006} and \cite{pi-ric}. 

\

Hence we consider entire solutions to the following problems in the whole $\mathbb R^2$:
\begin{equation}
\label{vw}\Delta V+e^UV=e^Uf_1(U)\quad \hbox{ and }\quad \Delta W+e^UW=e^Uf_2(U,V),
\end{equation}
where $U$ is the bubble defined in \eqref{bubble}, 
$$f_1(u):=\frac{u^2}2\quad \hbox{ and }\quad f_2(u,v):=uv-\frac{u^3}2-\frac12\left(v-\frac{u^2}2\right)^2.$$
{The choice of $f_1$ and $f_2$ is done in order to control the error. We refer to Section \ref{3} for a complete discussion about this, here we only stress that it is strictly linked to the use of the Taylor expansion in Lemma \ref{taylor} (see in particular Subsections \ref{Section:errorOrigin} and \ref{Section:errorRoots}, where Lemma \ref{taylor} is needed).}

\

The existence of suitable radial $V,W$ with logaritmic behavior at infinity follows as in \cite{EspositoMussoPistoiaJDE2006,pi-ric} (see also \cite{ci}). In particular, $V$ has the following asymptotic behavior, as $|x|$ goes to $+\infty$:
\begin{equation}\label{vlog}
V(x)=\mathcal C_1\log |x|+O\left(\frac1{|x|}\right),\qquad\nabla V(x)=\mathcal C_1\frac x{|x|^2}+O\left(\frac1{|x|^2}\right),
\end{equation}
where
$$\mathcal C_1=\int_0^{+\infty}r\frac{r^2-1}{r^2+1}e^{U(r)}f_1(U(r))\mathrm dr=12(1-\log 2),$$
and similarly
\begin{equation}\label{wlog}
W(x)=\mathcal C_2\log |x|+O\left(\frac1{|x|}\right),\qquad\nabla W(x)=\mathcal C_2\frac x{|x|^2}+O\left(\frac1{|x|^2}\right),
\end{equation}
with
$$\mathcal C_2=\int_0^{+\infty}r\frac{r^2-1}{r^2+1}e^{U(r)}f_2(U(r),V(r))\mathrm dr;$$
$\mathcal C_2$ does not have an explicit form, but the integral is finite because, from \eqref{bubble} and \eqref{vlog}, both $U$ and $V$ grow logaritmically at infinity, whereas $f$ grows polynomially and $e^{U(r)}$ decays as $\frac1{r^4}$.\\
The suitable rescalements
$$ 
V_1(x):=V\left(\frac x\alpha\right),\ W_1(x):=W\left(\frac x\alpha\right),\
V_2(x):=V\left(\frac{x^k-\rho^k}{\beta^k}\right),\ W_2(x):=W\left(\frac{x^k-\rho^k}{\beta^k}\right)
 $$
solve, respectively, for $x\in\mathbb R^2$:
\begin{eqnarray*}
&&\Delta V_1(x)+e^{U_1(x)}V_1(x)=e^{U_1(x)}f_1\left(U\left(\frac x\alpha\right)\right);\\
&&\Delta W_1(x)+e^{U_1(x)}W_1(x)=e^{U_1(x)}f_2\left(U\left(\frac x\alpha\right),V\left(\frac x\alpha\right)\right);\\
&&\Delta V_2(x)+|x|^{2k-2}e^{U_2(x)}V_2(x)=|x|^{2k-2}e^{U_2(x)}f_1\left(U\left(\frac{x^k-\rho^k}{\beta^k}\right)\right);\\
&&\Delta W_2(x)+|x|^{2k-2}e^{U_2(x)}W_2(x)=|x|^{2k-2}e^{U_2(x)}f_2\left(U\left(\frac{x^k-\rho^k}{\beta^k}\right),V\left(\frac{x^k-\rho^k}{\beta^k}\right)\right).
\end{eqnarray*}

They will be all used to get extra lower order terms for the approximation of the solution (see Subsection \ref{subsection:ansatzTotal} below).

\subsection{The final ansatz: fixing the Dirichlet boundary conditions}\label{subsection:ansatzTotal}

We will look for $k-$symmetric solutions to \eqref{eqp} (i.e. solutions in the space defined in \eqref{def:H10sim}) in the form
\begin{equation}\label{uphi}
 {u=\Upsilon+\phi},
\end{equation}
where $\Upsilon$ is explicit and given by:
\begin{equation}\label{upsp}
 {\Upsilon:=\underbrace{\tau\left(\mathrm PU_1+\frac{\mathrm PV_1}p+\frac{\mathrm PW_1}{p^2}\right)}_{=:\Upsilon_1}-\underbrace{\tau\eta\left(\mathrm PU_2+\frac{\mathrm PV_2}p+\frac{\mathrm PW_2}{p^2}\right)}_{:=\Upsilon_2}},
\end{equation}
for some $\tau,\eta>0$. Here $U_{1}$ and $U_{2}$ are the functions in Subsection \ref{subsection:ansatzFirst}, while
$V_{1}, V_{2}, W_{1}$ and $W_{2}$ are the ones defined in Subsection 
\ref{subsection:ansatzSecond}, and we are considering their projections on $H^{1}_{0}(\Omega)$. Recall that for any function $\varphi\in H^{1}(\Omega)$ its projection $\mathrm P \varphi$ on $H^{1}_{0}(\Omega)$ is defined as the solution of 
\begin{equation}\label{proie} -\Delta(\mathrm P\varphi)=-\Delta \varphi\ \mbox{in}\ \Omega,\quad P\varphi=0\ \mbox{on}\ \partial\Omega.\end{equation}
The function $\Upsilon$ is clearly $k-$symmetric. 
Moreover, it is important to point out that
 since $\Upsilon_1$ depends on the positive parameters $\alpha, \tau$ and $\Upsilon_2$ depends on $k\in\mathbb N$ and on the positive parameters $\beta, \rho, \tau, \eta$, then $\Upsilon$ depends on five parameters. Whereas $k$ is fixed, $\alpha,\beta,\rho,\tau$ and $\eta$ all depend on $p$. The parameters $\alpha,\beta,\rho,\tau$ will all go to $0$ as $p$ goes to $+\infty$, while the parameter $\eta$ will be close to 
 \begin{equation}\label{eta0}
 \eta_{\infty,k}:=\frac2{k-1}.\end{equation}
 At this stage the choice of $\eta$ could sound baffling but it will be clear in the last step of the finite-dimensional reduction (see Proposition \ref{reduction}).
 All the five parameters shall satisfy the four conditions \eqref{param} in Section \ref{3}, so that only one parameter remains free to be chosen, as we will discuss later on in Section \ref{3} (see in particular \eqref{param} and Lemma \ref{asymptotic}).
 This fact suggests that the higher order term $\phi\in H^1_{0,k}(\Omega)$ (see \eqref{def:H10sim}) in \eqref{uphi} must belong to a suitable space 
 of codimension $1$. Actually, it shall satisfy an orthogonality condition (see \eqref{orto} in Section \ref{4}).
 
\subsection{The reduction process}\label{SubsectionReduction}
 We aim to find a solution to \eqref{eqp} as in \eqref{uphi} where the higher order term $\phi\in H^1_{0,k}(\Omega)$ solves the equation
\begin{equation}\label{rln}
\mathscr E+\mathscr L\phi+\mathscr N(\phi)=0,
\end{equation}
where $\mathscr E$ is the \emph{error term}:
\begin{equation}\label{erre}\mathscr E:=\Delta\Upsilon+|\Upsilon|^{p-1}\Upsilon,\end{equation}
$\mathscr L \phi$ is the \emph{linear term}:
\begin{equation}\label{elle}\mathscr L\phi:=\Delta\phi+p|\Upsilon|^{p-1}\phi\end{equation}
and $\mathscr N(\phi)$ is the \emph{nonlinear term}:
\begin{equation}\label{enne}\mathscr N(\phi):=|\Upsilon+\phi|^{p-1}(\Upsilon+\phi)-|\Upsilon|^{p-1}\Upsilon-p|\Upsilon|^{p-1}\phi.\end{equation}

Throughout all the paper we will assume that $p$ is a \emph{large} parameter, namely we will fix once and for all some $p_0>1$ and we will assume $p>p_0$; the estimates explicitly depending on $p$ are meant for $p>p_0$. Similarly, since $\alpha,\beta,\rho,\tau$ vanish as $p$ grows, the estimates explicitly depending on such parameters are meant for small values of the parameters. We will make no further comments on this.

 \section{The choice of the parameters and the size of the error}\label{3}
We need to choose all the positive parameters $\alpha,\beta,\rho,\tau,\eta$ in the ansatz \eqref{upsp}, which depend on $p$, so that the error term $\mathscr E$ given in \eqref{erre} is small enough as $p\to+\infty.$ 
We keep all the parameters as functions of the parameter $\eta$ (Subsection \ref{subsection:parametersineta}). The free parameter $\eta$ will be derived in the last step of the finite-dimensional reduction (see Proposition \ref{reduction}) and found in a neighbourhood of $\eta_{\infty,k}$ defined in \eqref{eta0}. Hence in Subsection \ref{subsection:preliminaryEta} we collect some preliminary estimates on the parameters $\alpha,\beta,\rho$, assuming that $\eta$ is close enough to $\eta_{\infty,k}$ (see Lemma \ref{alphabetarho}). In Subsection \ref{subsection:ansatzStime} we also collect some preliminary estimates on the profiles in the ansatz in term of the parameters $\alpha,\beta,\rho$.
Finally, exploiting these preliminary results, we estimate the size of the error $\mathscr E$ in terms of $p$, assuming that $\eta$ is close to $\eta_{\infty,k}.$ More precisely, we first estimate the error close to the positive peak (Subsection \ref{Section:errorOrigin}), next close to the negative peaks (see Subsection \ref{Section:errorRoots}), and finally far away from the peaks (Subsection \ref{Section:errorFaraway}). The final estimate of $\mathscr E$ is contained in Proposition \ref{r} (see Subsection \ref{Section:errorTotale}).

 \subsection{The choice of parameters as functions of $\eta$}\label{subsection:parametersineta}
The function $\Upsilon$ in \eqref{upsp} depends on the integer $k\in\mathbb N$, which is fixed, and the parameters $\mathscr E$, which all depend on $p$. In particular, such parameters are choosen by the following relations:
\begin{equation}\label{param}
\left\{\begin{array}{l}\frac\tau{\alpha^2}=(p\tau)^p\\\tau\eta\frac{k^2\rho^{2k-2}}{\beta^{2k}}=(p\tau\eta)^p\\\left(4-\frac{\mathcal C_1}p-\frac{\mathcal C_2}{p^2}\right)\log \frac{\rho^{k\eta}}{\alpha}-\log 8-2(k-1)\pi\left(4-\frac{\mathcal C_1}p-\frac{\mathcal C_2}{p^2}\right)H(0,0)=p
\\
\left(4-\frac{\mathcal C_1}p-\frac{\mathcal C_2}{p^2}\right)\log \frac{\rho^{\frac{1}{\eta}}}{\beta^k}-\log 8+2(k-1)\pi\left(4-\frac{\mathcal C_1}p-\frac{\mathcal C_2}{p^2}\right)H(0,0)=p\end{array}\right.
\end{equation}
where $\mathcal C_1,\mathcal C_2$ are given by \eqref{vlog} and \eqref{wlog}.\\
By means of the implicit function theorem, one easily sees that, for large $p$, the solutions to the $4$ equations in $5$ variables in \eqref{param} are a $1$-parameter family where the dependence of $\alpha,\beta,\rho,\tau$ with respect of $\eta$ can be made explicit. 
 {We agree that we keep all the parameters as functions of the parameter $\eta$ which depends on $p$. In the following Lemma we give their asymptotic values, in dependence of $\eta$ as $p\to+\infty$. It is important to point out that from now on the parameter $\eta$ must be far away from the value 1.
\begin{lemma}\label{asymptotic} 
The parameters $\tau,\alpha,\beta,\rho$ satisfy the following:
$$\left\{\begin{array}{l}\tau=\frac{\sqrt e}p\eta^{-\frac{k\eta^2}{(k\eta-1)(\eta+1)}}+O\left(\frac{\log p}{p^2}\right)\\\rho=e^{\frac{\eta\log \eta}{2(k\eta-1)(\eta+1)}p}\left(K_1(\eta)+O\left(\frac1p\right)\right)\\\alpha=e^{-\frac p4\left(1-\frac{2k\eta^2\log \eta}{(k\eta-1)(\eta+1)}\right)}\left(K_2(\eta)+O\left(\frac1p\right)\right)\\\beta=e^{-\frac p{4k}\left(1-\frac{2\log \eta}{(k\eta-1)(\eta+1)}\right)}\left(K_3(\eta)+O\left(\frac1p\right)\right)\end{array}\right.$$
Here, $K_i(\eta)$ are positive constants, smoothly depending on $\eta$, which are uniformly bounded when $\frac1k<C\le\eta\le C'<1$.\\
The estimates are uniform for $\eta$ in such a range and the same estimates hold true for the derivative in $\eta$ of all parameters.
\end{lemma}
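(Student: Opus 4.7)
The plan is to take logarithms of the four equations in \eqref{param}, observe that the resulting system is linear in $\log\alpha,\log\beta,\log\rho,\log\tau$ (with $\eta$- and $p$-dependent coefficients), solve it explicitly at leading order, and then apply the implicit function theorem around $1/p=0$ to secure smooth dependence on $\eta$ together with the $O(1/p)$ control of the remainder.

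First, taking logarithms in the first two equations of \eqref{param} immediately yields
\begin{equation*}
-2\log\alpha = p\log p + (p-1)\log\tau, \qquad 2k\log\beta-(2k-2)\log\rho = -p\log p-(p-1)\log(\tau\eta)+2\log k,
\end{equation*}
which express $\log\alpha$ and $\log\beta$ linearly in terms of $\log\tau,\log\rho,\eta$. Substituting them into the third and fourth equations produces a $2\times 2$ linear system for $\log\rho,\log\tau$ with coefficients involving $\sigma_p:=4-\mathcal C_1/p-\mathcal C_2/p^2$. The decisive observation is that subtracting the transformed fourth from the transformed third eliminates simultaneously the divergent $p$- and $\sigma_p p\log p$-terms and the whole $\sigma_p$-prefactor, leaving the closed exact equation
\begin{equation*}
\frac{(k\eta-1)(\eta+1)}{\eta}\log\rho-\frac{p-1}{2}\log\eta = 4(k-1)\pi H(0,0)-\log k,
\end{equation*}
where the coefficient of $\log\rho$ comes from the identity $k\eta^2+(k-1)\eta-1=(k\eta-1)(\eta+1)$. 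Solving and exponentiating produces the claimed expansion of $\rho$, and $K_1(\eta)$ is read off the $O(1)$-part. Back-substituting into the third equation and expanding $1/\sigma_p=1/4+\mathcal C_1/(16p)+O(1/p^2)$ yields
\begin{equation*}
\log\tau = -\log p+\tfrac12-\frac{k\eta^2\log\eta}{(k\eta-1)(\eta+1)}+O\!\left(\tfrac{\log p}{p}\right),
\end{equation*}
equivalent to the claimed expansion of $\tau$. Feeding $\log\tau,\log\rho$ back into the two logarithmic relations above then produces the expansions of $\alpha$ and $\beta$; the simplification in the exponent of $\beta$ uses the further identity $(k-1)\eta-(1-x)(k\eta-1)(\eta+1)=1$ with $x=k\eta^2/((k\eta-1)(\eta+1))$, which yields the coefficient $2/((k\eta-1)(\eta+1))$ inside $b_k$.

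To make the argument rigorous uniformly in $p$ and $\eta$, I would renormalise by setting $\widetilde\tau:=p\tau$ and letting $\widetilde\alpha,\widetilde\beta,\widetilde\rho$ equal $\alpha,\beta,\rho$ divided by their leading exponential factors. The system \eqref{param} then takes the form of a smooth vector equation $F(\widetilde\alpha,\widetilde\beta,\widetilde\rho,\widetilde\tau;\eta,\varepsilon)=0$ with $\varepsilon=1/p$, solved at $\varepsilon=0$ by the explicit quadruple derived above. The Jacobian of $F$ in the tilde-variables at $\varepsilon=0$ is essentially upper-triangular with nonzero diagonal, hence invertible as long as $(k\eta-1)(\eta+1)/\eta$ stays bounded away from zero; the standing assumption $1/k<C\le\eta\le C'<1$ guarantees this uniformly. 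The implicit function theorem then delivers a smooth solution for all large $p$, with the $O(1/p)$ remainder uniform in $\eta$; the corresponding bound on $\partial_\eta$ of each parameter follows by differentiating $F=0$ and inverting the same Jacobian.

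The main technical obstacle I foresee is the delicate tracking of the $\sigma_p$- and $p\log p$-corrections needed to reach the stated precision: the identity $p\log p-(p-1)\log p=\log p$ injects a mild logarithmic defect into $\log\alpha$ and $\log\beta$ through the $O(\log p/p)$ error in $\log\tau$, and these corrections must be carefully absorbed into the prefactors $K_2(\eta)$ and $K_3(\eta)$ in order to match the stated $O(1/p)$-form.
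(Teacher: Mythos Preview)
Your proposal is correct and follows essentially the same route as the paper: linearise by taking logarithms, eliminate to isolate $\log\rho$, then back-substitute for $\tau,\alpha,\beta$, and finally appeal to the implicit function theorem for the smooth dependence on $\eta$ and the derivative estimates. Your elimination is in fact marginally cleaner than the paper's, since by keeping the full $\sigma_p$-prefactor until after the subtraction you obtain an \emph{exact} equation for $\log\rho$, whereas the paper expands $1/\sigma_p$ first and carries an unnecessary $O(1/p)$ through that step; this does not affect the final statement.
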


\begin{proof} 
We can write, from \eqref{param},
\begin{equation}\label{system}
\left\{\begin{array}{l}-2\log \alpha=p\log p+(p-1)\log \tau\\(2k-2)\log \rho-2k\log \beta=p\log p+(p-1)(\log \tau+\log \eta)-\log \left(k^2\right)\\4k\eta\log \rho-4\log \alpha=p+\frac{\mathcal C_1}4+8(k-1)\pi H(0,0)+\log 8+O\left(\frac1p\right)\\\frac4\eta\log \rho-4k\log \beta=p+\frac{\mathcal C_1}4-8(k-1)\pi H(0,0)+\log 8+O\left(\frac1p\right)\end{array}\right..
\end{equation}
Subtracting from the third equation twice the first equation, and then from the fourth equation twice the second equation, we get the two equalities:
$$\left\{\begin{array}{l}4k\eta\log \rho=p-2p\log p-2(p-1)\log \tau+\frac{\mathcal C_1}4+8(k-1)\pi H(0,0)\\\qquad+\log 8+O\left(\frac1p\right)\\\left(\frac4\eta-4k+4\right)\log \rho=p-2p\log p-2(p-1)(\log \tau+\log \eta)\\\qquad-8(k-1)\pi H(0,0)+\frac{\mathcal C_1}4+\log 8+4\log k+O\left(\frac1p\right)\end{array}\right.;$$
therefore,
\begin{eqnarray*}
\log \rho&=&\frac{2(p-1)\log \eta+16\pi(k-1)H(0,0)-4\log k+O\left(\frac1p\right)}{4k\eta-\frac4\eta +4k -4}\\
&=&(p-1)\frac{\eta\log \eta}{2(k\eta-1)(\eta+1)}+\frac{4(k-1)\eta\pi H(0,0)-\eta\log k}{(k\eta-1)(\eta+1)}+O\left(\frac1p\right),
\end{eqnarray*}
which gives the estimate for $\rho$.
Then, from \eqref{system}, we get
\begin{eqnarray*}
\alpha&=&\rho^{k\eta}e^{-\frac p4 -\frac{C_1}{16}-2(k-1)\pi H(0,0)-\frac{\log 8}4+O\left(\frac1p\right)}\\
&=&e^{\frac{k\eta^2\log \eta}{2(k\eta-1)(\eta+1)}(p-1)}e^{-\frac p4}e^{-\frac{C_1}{16}-2(k-1)\pi H(0,0)-\frac{\log 8}4}\left(K_1(\eta)+O\left(\frac1p\right)\right)\left(1+O\left(\frac1p\right)\right)\\
&=&e^{-\frac p4\left(1-\frac{2k\eta^2\log \eta}{(k\eta-1)(\eta+1)}\right)}\left(K_2(\eta)+O\left(\frac1p\right)\right),
\end{eqnarray*}
\begin{eqnarray*}
\beta&=&\rho^\frac1{k\eta}e^{-\frac p{4k}-\frac{C_1}{16}+2\frac{k-1}k\pi H(0,0)-\frac{\log 8}{4k}+O\left(\frac1p\right)}\\
&=&e^{\frac{\log \eta}{2k(k\eta-1)(\eta+1)}(p-1)}e^{-\frac p{4k}}e^{-\frac{C_1}{16}+2\frac{k-1}k\pi H(0,0)-\frac{\log 8}{4k}}\left(K_1(\eta)+O\left(\frac1p\right)\right)\left(1+O\left(\frac1p\right)\right)\\
&=&e^{-\frac p{4k}\left(1-\frac{2\log \eta}{(k\eta-1)(\eta+1)}\right)}\left(K_3(\eta)+O\left(\frac1p\right)\right),
\end{eqnarray*}
and
\begin{eqnarray*}
\tau&=&p^{-\frac p{p-1}}\alpha^{-\frac2{p-1}}\\
&=&p^{-\frac p{p-1}}e^{\frac p{2(p-1)}\left(1-\frac{2k\eta^2\log \eta}{(k\eta-1)(\eta+1)}\right)}\left(1+O\left(\frac1p\right)\right)\\
&=&\frac{\sqrt e}p\eta^{-\frac{k\eta^2}{(k\eta-1)(\eta+1)}}+O\left(\frac{\log p}{p^2}\right).
\end{eqnarray*}
The uniformity of the limits with respect to $\eta$ is straightforward. The convergence of the derivatives follows from the smoothness of equations in \eqref{param} and the implicit function theorem.
\end{proof}

\subsection{Preliminaries estimates when $\eta$ is close to $\eta_{\infty,k}$}\label{subsection:preliminaryEta}

We choose the parameter $\eta$ in a neighbourhood of $\eta_{\infty,k}$ given in \eqref{eta0}
 and we prove a couple of technical but crucial estimates.
 It is important to point out that if $k=3$ then $\eta_{\infty,3}=1$ and Lemma \ref{asymptotic} cannot be applied.\\

\begin{lemma}\label{condeta} 
If $k=4,5$, then there exists $\epsilon>0$ such that if 
\begin{equation}\label{eta}
\eta_{\infty,k}-\epsilon\le\eta\le\eta_{\infty,k}+\epsilon,
\end{equation}
where $\eta_{k}$ is defined in \eqref{eta0}, then
\begin{eqnarray}
\label{eta1} &\frac2k<\eta<1,\\
\label{etacond}& \frac12+\frac{\log \eta}{\eta+1}>0,\\
\label{etaalpha} &\frac12+\frac{k\eta^2}{(k\eta-1)(\eta+1)}\log \eta>0,\\
\label{etak} &\frac12+\frac{(k+1)\eta}{2(k\eta-1)(\eta+1)}\log \eta>0.
\end{eqnarray}
\end{lemma}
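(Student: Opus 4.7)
Since each of the four inequalities \eqref{eta1}--\eqref{etak} is a strict inequality involving a function of $\eta$ that is continuous at $\eta=\eta_{\infty,k}=\frac{2}{k-1}$, by continuity it is enough to verify each of them for $\eta=\eta_{\infty,k}$: the existence of the required $\epsilon>0$ then follows automatically, after possibly taking the minimum of the $\epsilon$'s obtained for each inequality. Therefore, the plan is simply to substitute $\eta=\frac{2}{k-1}$ into the four expressions and check positivity separately for $k=4$ and $k=5$.

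The first inequality \eqref{eta1} is immediate: $\frac{2}{k}<\frac{2}{k-1}<1$ is equivalent to $1<k-1<k$, which clearly holds for $k=4,5$. For the remaining three inequalities, the key algebraic simplifications are the identities
\[
k\eta-1=\frac{k+1}{k-1},\qquad \eta+1=\frac{k+1}{k-1},\qquad (k\eta-1)(\eta+1)=\frac{(k+1)^2}{(k-1)^2},
\]
valid at $\eta=\frac{2}{k-1}$, so that $\frac{k\eta^2}{(k\eta-1)(\eta+1)}=\frac{4k}{(k+1)^2}$ and $\frac{(k+1)\eta}{2(k\eta-1)(\eta+1)}=\frac{k-1}{k+1}$. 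Inequality \eqref{etacond} at $\eta=\frac{2}{k-1}$ becomes $\frac{1}{2}-\frac{k-1}{k+1}\log\frac{k-1}{2}>0$ multiplied by $\frac{2}{k+1}$ (equivalently $\frac{k+1}{2(k-1)}>\log\frac{k-1}{2}$); inequality \eqref{etaalpha} becomes $\frac{1}{2}-\frac{4k}{(k+1)^2}\log\frac{k-1}{2}>0$; and inequality \eqref{etak} is exactly $\frac{1}{2}-\frac{k-1}{k+1}\log\frac{k-1}{2}>0$, which is precisely the condition that defines $k_0$ in \eqref{kzero}.

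Plugging in $k=4$ gives $\log\frac{k-1}{2}=\log\frac{3}{2}\approx 0.405$, and one checks:
\[
\tfrac{5}{6}-\log\tfrac{3}{2}\approx 0.428,\qquad \tfrac12-\tfrac{16}{25}\log\tfrac{3}{2}\approx 0.241,\qquad \tfrac12-\tfrac35\log\tfrac{3}{2}\approx 0.257,
\]
all strictly positive. For $k=5$ one has $\log\frac{k-1}{2}=\log 2\approx 0.693$ and
\[
\tfrac{3}{4}-\log 2\approx 0.057,\qquad \tfrac12-\tfrac{20}{36}\log 2\approx 0.115,\qquad \tfrac12-\tfrac23\log 2\approx 0.038,
\]
again all strictly positive. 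Since the last quantity coincides with the defining expression of $k_0$, the inequality for $k=5$ holds precisely because $5<k_0\approx 5.187$.

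The only real content of the proof is thus the numerical verification at $k=4,5$; the main (and only mild) obstacle is that the last inequality at $k=5$ is fairly close to zero, and it is exactly the obstruction that prevents the construction from going through for $k\ge 6$, consistently with the discussion following \eqref{kzero}.
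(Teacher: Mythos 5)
Your argument is correct and takes essentially the same approach as the paper: reduce by continuity to the single point $\eta=\eta_{\infty,k}$, simplify using $k\eta-1=\eta+1=\frac{k+1}{k-1}$, and verify positivity of the three resulting constants. The only difference is in how that last step is closed: the paper notes that \eqref{etacond} and \eqref{etak} coincide at $\eta_{\infty,k}$ with the $k_0$-defining expression in \eqref{kzero} and treats \eqref{etaalpha} by the monotonicity of $k\mapsto\frac{4k}{(k+1)^2}\log\frac2{k-1}$ together with $k_0>2+\sqrt5$, while you check the same quantities numerically for $k=4,5$ --- an equally valid and somewhat more elementary finish. Two small remarks: the parenthetical ``multiplied by $\frac2{k+1}$'' is a slip --- the expression \eqref{etacond} at $\eta_{\infty,k}$ is \emph{exactly} $\frac12-\frac{k-1}{k+1}\log\frac{k-1}2$, and the equivalent form $\frac{k+1}{2(k-1)}>\log\frac{k-1}2$ you actually test is obtained by multiplying by $\frac{k+1}{k-1}$, not $\frac2{k+1}$; and your first and third numerical checks are the same inequality written in two equivalent forms (since $\frac{(k+1)\eta_{\infty,k}}{2(k\eta_{\infty,k}-1)(\eta_{\infty,k}+1)}=\frac1{\eta_{\infty,k}+1}=\frac{k-1}{k+1}$), which is why the paper treats \eqref{etacond} and \eqref{etak} in one stroke. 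Neither affects correctness.
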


\begin{proof} 
One clearly suffices to show that all inequalities hold for $\eta=\eta_{\infty,k}$ and then argue by continuity; \eqref{eta1} is immediate to be verified. \eqref{etacond} holds true for $\eta_{\infty,k}$ because
$$\frac12+\frac{\log \eta_{\infty,k}}{\eta_{\infty,k}+1}=\frac12-\frac{k-1}{k+1}\log \frac{k-1}2>0,$$
since $k<k_0$, $k_0$ solves \eqref{kzero} and $k\mapsto\frac12-\frac{k-1}{k+1}\log \frac{k-1}2$ strictly decreases; \eqref{etak} also holds true because
$$\frac12+\frac{(k+1)\eta_{\infty,k}}{2(k\eta_{\infty,k}-1)(\eta_{\infty,k}+1)}\log \eta_{\infty,k}=\frac12+\frac{\log \eta_{\infty,k}}{\eta_{\infty,k}+1}>0.$$
Finally, $\eta_{\infty,k}$ satisfies \eqref{etaalpha} because, being $k\mapsto\frac{4k}{(k+1)^2}\log \frac2{k-1}$ decreasing, then
\begin{eqnarray*}
\frac12+\frac{k\eta_{\infty,k}^2}{(k\eta_{\infty,k}-1)(\eta_{\infty,k}+1)}\log \eta_{\infty,k}&=&\frac12+\frac{4k}{(k+1)^2}\log \frac2{k-1}\\
&\ge&\frac12+\frac{4k_0}{(k_0+1)^2}\log \frac2{k_0-1}\\
&>&\frac12+\frac{k_0-1}{k_0+1}\log \frac2{k_0-1}\\
&=&0.
\end{eqnarray*}
\end{proof}

Under this choice, not only the assumptions on $\eta$ of Lemma \ref{asymptotic} are satisfied but the concentration rates $\alpha,\beta$ of the \emph{positive} and \emph{negative} bubbles are both faster than the rate $\rho$ at which bubbling are approaching each other. 

\begin{lemma}\label{alphabetarho} 
Let $k=4,5$, and let $\eta$ satisfy the assumption \eqref{eta} in Lemma \ref{condeta}.
Then there exists $\varepsilon>0$ such that, for $p$ large enough,
\begin{eqnarray}
\label{alpharho}\frac\alpha\rho&\le&e^{-\varepsilon p},\\
\label{betarho}\frac\beta\rho&\le&e^{-\varepsilon p},\\
\label{alpharhok}\frac\alpha{\rho^{2k\eta}}&\le&e^{-\varepsilon p}.
\end{eqnarray}
Moreover, there exists $\theta_0>0$ such that, if $0<\theta\le\theta_0$, then
\begin{eqnarray}
\label{alphatheta}\frac{\alpha^{1-\theta}}\rho&\le&e^{-\frac\varepsilon2p},\\
\label{betatheta}\frac{\beta^{1-\theta}}\rho&\le&e^{-\frac\varepsilon2p}.
\end{eqnarray}
\end{lemma}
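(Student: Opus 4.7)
The plan is to substitute the asymptotic expansions from Lemma \ref{asymptotic} directly into the logarithm of each ratio, use the algebraic identities $k\eta^2-\eta=\eta(k\eta-1)$ and $\tfrac1k-\eta=-(k\eta-1)/k$ to cancel the factor $(k\eta-1)$ that appears in the denominator of $\log\rho$, and then reduce each inequality to one of the positivity statements furnished by Lemma \ref{condeta}. Since all the parameters are known explicitly as $\e^{(\text{constant})\cdot p}$ modulo subexponential factors, it suffices to track the leading exponent.

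For \eqref{alpharho} I compute
\[
\log\alpha - \log\rho = -\frac{p}{4} + \frac{p\log\eta}{2(k\eta-1)(\eta+1)}\bigl(k\eta^2 - \eta\bigr) + O(1) = -\frac{p}{2}\left(\frac12 - \frac{\eta\log\eta}{\eta+1}\right) + O(1),
\]
and since $\log\eta<0$ by \eqref{eta1}, the parenthesis exceeds $\tfrac12$, so the bound holds with any $\varepsilon<1/4$. For \eqref{betarho} the same algebra produces
\[
\log\beta - \log\rho = -\frac{p}{2k}\left(\frac12 + \frac{\log\eta}{\eta+1}\right) + O(1),
\]
whose bracket is strictly positive by condition \eqref{etacond}. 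For \eqref{alpharhok} I get
\[
\log\alpha - 2k\eta\log\rho = -\frac{p}{2}\left(\frac12 + \frac{k\eta^2\log\eta}{(k\eta-1)(\eta+1)}\right) + O(1),
\]
where the bracket is strictly positive by \eqref{etaalpha}. In each case the constants $K_i(\eta)$ in Lemma \ref{asymptotic} are bounded, so they are absorbed into the $O(1)$ and do not affect the exponential rate.

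The perturbed estimates \eqref{alphatheta}--\eqref{betatheta} follow at once from \eqref{alpharho}--\eqref{betarho}. Writing
\[
\log\frac{\alpha^{1-\theta}}{\rho} = \log\frac{\alpha}{\rho} - \theta\log\alpha,
\]
Lemma \ref{asymptotic} gives $-\log\alpha \le Cp$ for a constant $C=C(\eta)$ uniform in the range of Lemma \ref{condeta}; choosing $\theta_0 := \varepsilon/(2C)$ therefore makes the correction $-\theta\log\alpha$ no larger than $\varepsilon p/2$ for all $0<\theta\le\theta_0$, and the claimed bound $\alpha^{1-\theta}/\rho\le \e^{-\varepsilon p/2}$ follows. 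The argument for $\beta^{1-\theta}/\rho$ is identical, starting from \eqref{betarho}.

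There is no real obstacle in this proof: it is essentially bookkeeping. The only step one must be careful with is the algebraic cancellation of $(k\eta-1)$ in the first two exponent simplifications, which converts what looks like a two-parameter condition into the single-variable conditions \eqref{etacond} and \eqref{etaalpha} already established in Lemma \ref{condeta}. Note also that condition \eqref{etak} is not needed here; it will be used elsewhere in the paper.
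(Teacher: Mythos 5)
Your proof is correct and follows essentially the same route as the paper's: both substitute the exponential rates from Lemma \ref{asymptotic}, simplify the exponents (your explicit use of $k\eta^2-\eta=\eta(k\eta-1)$ and $\tfrac1k-\eta=-(k\eta-1)/k$ is exactly the cancellation the paper performs implicitly), invoke \eqref{eta1}, \eqref{etacond}, \eqref{etaalpha} to sign the leading coefficients, and for the $\theta$-versions trade a small multiple of $\varepsilon p$ for the subexponential loss in $\alpha^{-\theta}$, $\beta^{-\theta}$. The only cosmetic difference is that the paper manipulates powers of $\rho$ directly (e.g.\ $\alpha/\rho\le C\rho^{k\eta-1}e^{-p/4}$) rather than taking logarithms, and fixes a single common $\varepsilon$ as the minimum of the three rates at the end; your remark that \eqref{etak} is unused here is also accurate.
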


\begin{proof} 
From Lemma \ref{asymptotic} we get
$$\frac\alpha\rho\le C\rho^{k\eta-1}e^{-\frac p4}\le C\rho^{k(\eta_{\infty,k}-\epsilon)-1}e^{-\frac p4}\le e^{-\frac p4}$$
for large $p$, since $k(\eta_{\infty,k}-\epsilon)-1>0$ by \eqref{eta1}, which gets \eqref{alpharho}.\\
Using again Lemma \ref{asymptotic} we get
$$\frac\beta\rho\le C\frac{e^{-\frac p{4k}}}{\rho^\frac{k\eta-1}{k\eta}}\le Ce^{-\frac p{2k}\left(\frac12+\frac{\log \eta}{\eta+1}\right)}\le Ce^{-\frac1{2k}\left(\frac12+\frac{\log (\eta_{\infty,k}+\epsilon)}{\eta_{\infty,k}+\epsilon+1}\right)p}\le e^{-\frac1{4k}\left(\frac12+\frac{\log (\eta_{\infty,k}+\epsilon)}{\eta_{\infty,k}+\epsilon+1}\right)p},$$
since $\frac12+\frac{\log (\eta_{\infty,k}+\epsilon)}{\eta_{\infty,k}+\epsilon+1}>0$ from \eqref{etacond}.\\
Similarly, in view of \eqref{etaalpha},
\begin{eqnarray*}
\frac\alpha{\rho^{2k\eta}}&\le&Ce^{-\frac p2\left(\frac12+\frac{k\eta^2}{(k\eta-1)(\eta+1)}\right)\log \eta}\\
&\le&Ce^{-\frac p2\left(\frac12+\frac{k(\eta_{\infty,k}+\epsilon)^2}{(k(\eta_{\infty,k}+\epsilon)-1)(\eta_{\infty,k}+\epsilon+1)}\log (\eta_{\infty,k}+\epsilon)\right)}\\
&\le&e^{-\frac p4\left(\frac12+\frac{k(\eta_{\infty,k}+\epsilon)^2}{(k(\eta_{\infty,k}+\epsilon)-1)(\eta_{\infty,k}+\epsilon+1)}\log (\eta_{\infty,k}+\epsilon)\right)},
\end{eqnarray*}
therefore, to prove the first part of the Lemma, we suffice to choose
$$\varepsilon:=\frac14\min\left\{1,\frac1k\left(\frac12+\frac{\log (\eta_{\infty,k}+\epsilon)}{\eta_{\infty,k}+\epsilon+1}\right),\frac12+\frac{k(\eta_{\infty,k}+\epsilon)^2}{(k(\eta_{\infty,k}+\epsilon)-1)(\eta_{\infty,k}+\epsilon+1)}\log (\eta_{\infty,k}+\epsilon)\right\}.$$

For the second part, we first observe that, since $\alpha,\beta<1$, both \eqref{alphatheta} and \eqref{betatheta} just suffice to be shown in the case $\theta=\theta_0$.\\
From Lemma \ref{asymptotic} we get
$$\alpha^{\theta_0}\ge\frac1Ce^{-\frac p4\left(1-\frac{2k\eta^2\log \eta}{(k\eta-1)(\eta+1)}\right)\theta_0}\ge\frac1Ce^{-p\left(1-\frac{2k(\eta_{\infty,k}-\epsilon)^2\log (\eta_{\infty,k}-\epsilon)}{(k(\eta_{\infty,k}-\epsilon)-1)(\eta_{\infty,k}+\epsilon+1)}\right)\frac{\theta_0}4}.$$
Therefore, choosing $\theta_0$ so small that
$$\left(1-\frac{2k(\eta_{\infty,k}-\epsilon)^2\log (\eta_{\infty,k}-\epsilon)}{(k(\eta_{\infty,k}-\epsilon)-1)(\eta_{\infty,k}+\epsilon+1)}\right)\frac{\theta_0}4\le\frac\varepsilon4$$
we get
$$\frac{\alpha^{1-\theta_0}}\beta\le Ce^{-\frac34\varepsilon p}\le e^{-\frac\varepsilon2p}.$$
\eqref{betatheta} follows similarly from \eqref{betarho} and Lemma \ref{asymptotic}.
\end{proof}

 From now on we agree that $\eta$ is chosen as in \eqref{eta} so that Lemma \ref{alphabetarho} holds true.

\subsection{Preliminaries estimates for the ansatz}\label{subsection:ansatzStime} We introduce some estimates which will be crucial in the computation of the error size.
For $x,y\in\Omega$ we denote as $G(x,y)$ the Green's function for $-\Delta$ on $\Omega$ and as $H(x,y)$ its regular part, that is:
$$\left\{\begin{array}{ll}-\Delta_xG(x,y)=\delta_y(x)&x\in\Omega\\G(x,y)=0&x\in\partial\Omega\end{array}\right.,\qquad H(x,y):=G(x,y)+\frac1{2\pi}\log |x-y|;$$
we point out that $H$ solves
\begin{equation}\label{h}
\left\{\begin{array}{ll}-\Delta_xH(x,y)=0&x\in\Omega\\H(x,y)=\frac1{2\pi}\log |x-y|&x\in\partial\Omega\end{array}\right..
\end{equation}

\

The following estimates will be very convenient in several parts of this work:
\begin{lemma}\label{pupvpw} Let $U_{1}$ and $U_{2}$ be the profiles introduced in Subsection \ref{subsection:ansatzFirst} and $V_{1}, V_{2}, W_{1}$ and $W_{2}$ be the ones defined in Subsection 
\ref{subsection:ansatzSecond}. 
The following estimates for their projections on $H^1_0(\Omega)$ (see \eqref{proie} for the definition) hold uniformly in $x\in\Omega$:
\begin{eqnarray}
\label{pu1}\mathrm PU_1(x)&=&U\left(\frac x\alpha\right)+8\pi H(0,0)-\log 8-4\log \alpha+O\left(|x|+\alpha^2\right),\\
\nonumber\mathrm PV_1(x)&=&V\left(\frac x\alpha\right)-2\mathcal C_1\pi H(0,0)-\mathcal C_1\log \alpha+O(|x|+\alpha),\\
\nonumber\mathrm PW_1(x)&=&W\left(\frac x\alpha\right)-2\mathcal C_2\pi H(0,0)-\mathcal C_2\log \alpha+O(|x|+\alpha),\\
\label{pu2out}\mathrm PU_2(x)&=&U\left(\frac{x^k-\rho^k}{\beta^k}\right)+8k\pi H(0,0)-\log \left(8k^2\right)-4k\log \beta\\
\nonumber&+&O\left(\left|x^k-\rho^k\right|+\beta^{2k}+\rho^k\right),\\
\nonumber\mathrm PV_2(x)&=&V\left(\frac{x^k-\rho^k}{\beta^k}\right)-2k\mathcal C_1\pi H(0,0)-k\mathcal C_1\log \beta\\
\nonumber&+&O\left(\left|x^k-\rho^k\right|+\beta^k+\rho^k\right),\\
\nonumber\mathrm PW_2(x)&=&W\left(\frac{x^k-\rho^k}{\beta^k}\right)-2k\mathcal C_2\pi H(0,0)-k\mathcal C_2\log \beta\\
\nonumber&+&O\left(\left|x^k-\rho^k\right|+\beta^k+\rho^k\right),
\end{eqnarray}
where $U,V,W$ are the functions defined in \eqref{bubble} and \eqref{vw}.
Moreover, if $|y|\le\frac12\frac\rho\alpha$, then:
\begin{eqnarray}
\label{pu2}\mathrm PU_2(\alpha y)&=&-4k\log \rho+8k\pi H(0,0)+O\left(\frac{\alpha^k}{\rho^k}|y|^k+\frac{\beta^{2k}}{\rho^{2k}}+\rho^k\right);\\
\nonumber\mathrm PV_2(\alpha y)&=&k\mathcal C_1\log \rho-2k\mathcal C_1\pi H(0,0)+O\left(\frac{\alpha^k}{\rho^k}|y|^k+\frac{\beta^k}{\rho^k}+\rho^k\right);\\
\nonumber\mathrm PW_2(\alpha y)&=&k\mathcal C_2\log \rho-2k\mathcal C_2\pi H(0,0)+O\left(\frac{\alpha^k}{\rho^k}|y|^k+\frac{\beta^k}{\rho^k}+\rho^k\right).
\end{eqnarray}
If $|y|\le\frac12\frac{\rho^k}{\beta^k}$, then:
\begin{eqnarray}
\label{pu1out}\mathrm PU_1\left(\sqrt[k]{\beta^ky+\rho^k}\right)&=&-4\log \rho+8\pi H(0,0)+O\left(\frac{\beta^k}{\rho^k}|y|+\frac{\alpha^2}{\rho^2}+\rho\right);\\
\nonumber\mathrm PV_1\left(\sqrt[k]{\beta^ky+\rho^k}\right)&=&\mathcal C_1\log \rho-2\mathcal C_1\pi H(0,0)+O\left(\frac{\beta^k}{\rho^k}|y|+\frac\alpha\rho+\rho\right);\\
\nonumber\mathrm PW_1\left(\sqrt[k]{\beta^ky+\rho^k}\right)&=&\mathcal C_2\log \rho-2\mathcal C_2\pi H(0,0)+O\left(\frac{\beta^k}{\rho^k}|y|+\frac\alpha\rho+\rho\right).
\end{eqnarray}
\end{lemma}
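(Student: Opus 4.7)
The scheme is uniform for all six projections: for any $\varphi \in \{U_1,V_1,W_1,U_2,V_2,W_2\}$, the difference $\mathrm P\varphi - \varphi$ is, by \eqref{proie}, harmonic in $\Omega$ with boundary value $-\varphi|_{\partial\Omega}$. The plan is therefore to Taylor-expand each $\varphi$ on $\partial\Omega$ (where $|x|$ and $|x-z_j|$ are bounded away from zero since all concentration points sit in the interior), isolate its leading logarithmic singularities, match them with a suitable linear combination of $H(\cdot, z)$ using the fundamental identity $H(x,y) = \frac1{2\pi}\log|x-y|$ for $x\in\partial\Omega$ coming from \eqref{h}, and then use the maximum principle to transfer the matching into the interior. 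A final Taylor expansion of $H$ around the relevant peaks produces the constant $H(0,0)$ appearing in the statement.

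For the three profiles concentrating at the origin, one computes on $\partial\Omega$
\begin{align*}
U_1(x) &= \log 8 + 2\log\alpha - 4\log|x| + O(\alpha^2),\\
V_1(x) &= \mathcal C_1\log|x| - \mathcal C_1\log\alpha + O(\alpha),\qquad W_1(x) = \mathcal C_2\log|x| - \mathcal C_2\log\alpha + O(\alpha),
\end{align*}
using the explicit form of $U_1$ and the logarithmic asymptotics \eqref{vlog}, \eqref{wlog}. Since $-4\log|x| = -8\pi H(x,0)$ and $\mathcal C_i\log|x| = 2\pi \mathcal C_i H(x,0)$ on $\partial\Omega$, an explicit harmonic function (a constant plus a multiple of $H(\cdot,0)$) matches $-\varphi|_{\partial\Omega}$ up to an $O(\alpha^2)$ or $O(\alpha)$ error. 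The maximum principle propagates the identity into $\Omega$, and the smoothness bound $H(x,0)=H(0,0)+O(|x|)$ away from the diagonal turns the $x$-dependent $H(x,0)$ into the stated $H(0,0)$ at the cost of an extra $O(|x|)$ in the remainder.

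For the three profiles centered at the $k$ peaks $z_j := \rho e^{\mathrm i 2\pi j/k}$, the new algebraic ingredient is the factorization
\[
|x^k - \rho^k| \;=\; \prod_{j=1}^k |x - z_j|,\qquad\text{so}\qquad \log|x^k - \rho^k| \;=\; \sum_{j=1}^k \log|x - z_j|.
\]
Expanding $U_2$ (and, via \eqref{vlog}, \eqref{wlog} applied at the argument $(x^k-\rho^k)/\beta^k$, also $V_2$ and $W_2$) on $\partial\Omega$, the leading logarithmic singularity is matched by $\sum_j 2\pi H(x, z_j)$ with the appropriate coefficient. Since the $z_j$ collapse to $0$ at rate $\rho$, the Lipschitz bound $\sum_{j=1}^k H(x, z_j) = k\,H(x, 0) + O(\rho)$ collapses all peaks to the origin; on the boundary one further simplifies $\log|x^k-\rho^k| = k\log|x| + O(\rho^k)$ and $|x^k-\rho^k|^2+\beta^{2k}=|x^k-\rho^k|^2(1+O(\beta^{2k}))$. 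Assembling these pieces and applying the maximum principle yields \eqref{pu2out} together with its $V_2, W_2$ counterparts, with the error term $O(|x^k-\rho^k|+\beta^{2k}+\rho^k)$ faithfully recording the three distinct scales at play.

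The rescaled formulas \eqref{pu2} and \eqref{pu1out} are finally obtained by specializing the global expansions just proved to the arguments $x=\alpha y$ (with $|y|\le\tfrac12\rho/\alpha$) and $x=\sqrt[k]{\beta^k y+\rho^k}$ (with $|y|\le\tfrac12\rho^k/\beta^k$), respectively. In the first regime $|(\alpha y)^k-\rho^k|^2=\rho^{2k}(1+O(\alpha^k|y|^k/\rho^k))$, so $U((\alpha y)^k-\rho^k)/\beta^k))$ becomes a combination of constants, $\log\rho$ and $\log\beta$ terms plus the required remainder; in the second regime $|x|=\rho(1+O(\beta^k|y|/\rho^k))^{1/k}$, so the bubble $U(x/\alpha)$ collapses to its far-field logarithm $\log 8 + 2\log\alpha - 4\log\rho$ modulo $O(\alpha^2/\rho^2 + \beta^k|y|/\rho^k)$. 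The only real subtlety is bookkeeping: the small parameters $\alpha,\beta,\rho,\alpha/\rho,\beta/\rho,\alpha/\rho^{2k\eta}$ all vanish at different exponential rates by Lemma \ref{alphabetarho}, and one must carefully check in every expansion that no contribution finer than the remainder written in the statement has been silently discarded.
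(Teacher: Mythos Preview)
Your approach is the same as the paper's: expand $\varphi$ on $\partial\Omega$, match the leading logarithm with a combination of $H(\cdot,z)$ via the boundary identity from \eqref{h}, and push the identity inside by the maximum principle. Your use of the factorization $|x^k-\rho^k|=\prod_{j=1}^k|x-z_j|$, reducing the $U_2$ case to $\sum_j H(x,z_j)$, is actually a cleaner way to write what the paper does with the shorthand $H(x^k,\rho^k)$ together with the identity $H(x^k,0)=kH(x,0)$.

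One small point to tighten: from $\sum_j H(x,z_j)=kH(x,0)+O(\rho)$ and then $H(x,0)=H(0,0)+O(|x|)$ you end up with a remainder $O(|x|+\rho)$ in the $PU_2$ expansion, which is strictly coarser than the $O(|x^k-\rho^k|+\rho^k)$ written in \eqref{pu2out}. To recover the stated bound you need the $k$-fold rotational symmetry of $\Omega$: the harmonic function $H(\cdot,0)$ is itself $k$-symmetric, so its Taylor expansion at the origin has no Fourier modes below order $k$ and hence $H(x,0)-H(0,0)=O(|x|^k)=O(|x^k-\rho^k|+\rho^k)$; similarly $\sum_j H(x,z_j)-kH(x,0)=O(\rho^k)$ by the same mechanism. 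That said, your coarser remainder $O(|x|+\rho)$ is still exponentially small in $p$ wherever this lemma is invoked (Lemmas \ref{in1}, \ref{in2}), so nothing downstream is affected.
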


\begin{proof} 
We follow \cite{egp}, Proposition A.1 (see also \cite[Section 2]{EspositoMussoPistoiaJDE2006}): since the harmonic function $\mathrm PU_1-U_1$ satisfies, for $x\in\partial\Omega$,
$$
\mathrm PU_1(x)-U_1(x)=-U_1(x)=4\log |x|-\log \left(8\alpha^2\right)+O\left(\alpha^2\right),
$$
then, recalling that $H(x,y)$ solves \eqref{h}, the maximum principle gets:
\begin{eqnarray*}
\mathrm PU_1(x)&=&U_1(x)+8\pi H(x,0)-\log \left(8\alpha^2\right)+O\left(\alpha^2\right)\\
&=&U\left(\frac x\alpha\right)+8\pi H(0,0)-\log 8-4\log \alpha+O\left(\alpha^2+|x|\right),
\end{eqnarray*}
namely \eqref{pu1}. Then, from \eqref{bubble} we deduce
\begin{equation}\label{ulog}
U(x)=-4\log |x|+\log 8+O\left(\frac1{|x|^2}\right);
\end{equation}
combining with \eqref{pu1}, one has
$$\mathrm PU_1(x)=-4\log |x|+8\pi H(0,0)+O\left(|x|+\frac{\alpha^2}{|x|^2}\right)$$
which, evaluated in $x=\sqrt[k]{\beta^ky+\rho^k}=\rho\left(1+O\left(\frac{\beta^k}{\rho^k}|y|\right)\right)$,
gives \eqref{pu1out}.\\
Applying the same argument to $U_2$ gives
\begin{eqnarray}
\nonumber\mathrm PU_2(x)&=&U_2(x)+8\pi H\left(x^k,\rho^k\right)-\log \left(8k^2\beta^{2k}\right)+O\left(\beta^{2k}\right)\\
\nonumber&=&U\left(\frac{x^k-\rho^k}{\beta^k}\right)+8k\pi H(0,0)-\log \left(8k^2\right)-4k\log \rho\\
\nonumber&+&O\left(\beta^{2k}+\left|x^k-\rho^k\right|+\rho^k\right)\\
\label{u2}&=&-4\log \left|x^k-\rho^k\right|+8k\pi H(0,0)-\log \left(8k^2\right)\\
\nonumber&+&O\left(\left|x^k-\rho^k\right|+\rho^k+\frac{\beta^{2k}}{|x^k-\rho^k|^2}\right)
\end{eqnarray}
since $H\left(x^k,0\right)=kH(x,0)$ and \eqref{ulog} holds. \\
Indeed,  $H(x,0)$ and  
$H(x^k,0)$ are harmonic functions which (in complex notation) satisfy
$$H(x^k,0)=\frac k{2\pi}\ln|x|=kH(x,0) \ \hbox{on}\ \partial\Omega.$$
Thus, we got \eqref{pu2} and, evaluating \eqref{u2} in $x=\alpha y$, \eqref{pu2out}.\\
The estimates for $V,W$ are shown similarly, in view of the asymptotic behaviors \eqref{vlog}, \eqref{wlog}.
\end{proof}
 
\subsection{From the large power $u^p$ to the exponential $e^u$}\label{upversuseu}
 The following Taylor expansion has been used in the construction of the ansatz. It is also a key tool to estimate the size of the error.
\begin{lemma}\label{taylor} 
For any real numbers $a,b,c$ such that for some $C>0$ one has
\begin{equation}\label{abc}
-\left(1-\frac1C\right)p\le a\le C,\qquad|b|+|c|\le Cp,
\end{equation}
the following asymptotic expansions hold true, as $p$ goes to $+\infty$:
\begin{eqnarray}
\label{taylorp}&&\left(1+\frac ap+\frac b{p^2}+\frac c{p^3}\right)^p\\
\nonumber&=&e^a\left(1+\frac1p\left(b-\frac{a^2}2\right)+\frac1{p^2}\left(c-ab+\frac{a^3}3+\frac12\left(b-\frac{a^2}2\right)^2\right)\right)\\
\nonumber&+&O\left(e^a\frac{1+a^6+b^6+c^6}{p^3}\right).
\end{eqnarray}
\end{lemma}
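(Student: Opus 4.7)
I would prove Lemma \ref{taylor} by a careful Taylor expansion with quantitative remainder tracking, followed by a short case split for the large-$|a|$ regime.

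Set $\xi := a/p + b/p^2 + c/p^3$. The hypotheses \eqref{abc} give $a/p \ge -(1-1/C)$ and $|b|/p^2 + |c|/p^3 \le 2C/p$, so $1+\xi \ge 1/(2C) > 0$ uniformly for $p$ large enough. The logarithm is therefore well-defined, and the Taylor formula with integral remainder yields
$$\log(1+\xi) = \xi - \frac{\xi^2}{2} + \frac{\xi^3}{3} - \frac{\xi^4}{4} + \frac{\xi^5}{5} + \mathcal R(\xi),\qquad |\mathcal R(\xi)|\le C'\,|\xi|^6,$$
where $C'$ depends only on the uniform lower bound $1/(2C)$ on $1+\xi$.

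Substituting the definition of $\xi$, multiplying by $p$, and regrouping by powers of $1/p$ via the finite multinomial expansions of $\xi^2,\ldots,\xi^5$ (a straightforward, if tedious, algebraic computation, using also $p|\xi|^6 \le C(1+|a|)^6/p^5$), one arrives at
$$p\log(1+\xi) = a + \frac{b - a^2/2}{p} + \frac{c - ab + a^3/3}{p^2} + r(p;a,b,c),\qquad |r|\le \tilde C\,\frac{1+a^6+b^6+c^6}{p^3}.$$
Next write $(1+\xi)^p = e^a e^z$ with $z := (b-a^2/2)/p + (c-ab+a^3/3)/p^2 + r$. When $|a|$ is at most of order $p^{1/2}$, the quantity $|z|$ stays bounded, so the Taylor expansion $e^z = 1 + z + z^2/2 + O(|z|^3 e^{|z|})$ applies. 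Noting
$$\frac{z^2}{2} = \frac{(b-a^2/2)^2}{2p^2} + O\!\left(\frac{1+a^6+b^6+c^6}{p^3}\right),$$
and collecting all terms of order $1/p$ and $1/p^2$, one recovers precisely the right-hand side of \eqref{taylorp} with the announced error.

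The main obstacle is the regime where $|a|$ exceeds $p^{1/2}$: here $|z|$ may be unbounded and the Taylor expansion of $e^z$ breaks down. I would handle this by observing that in this regime the weight $(1+a^6)/p^3$ already exceeds $1$, and comparing to the crude bound $(1+\xi)^p \le e^{p\xi} \le e^{a+C}$ and to the elementary polynomial estimate $|e^a \cdot (\text{truncated right-hand side of }\eqref{taylorp})| \le C e^a(1 + a^4/p^2)$. Both the exact left-hand side of \eqref{taylorp} and its main-term approximation are then individually dominated by $e^a(1+a^6+b^6+c^6)/p^3$, and the claimed estimate follows from the triangle inequality. Interpolating at $|a|=p^{1/2}$ stitches the two regimes into a uniform conclusion.
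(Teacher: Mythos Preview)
Your overall strategy—case-splitting at $|a|\sim p^{1/2}$ and handling the large-$|a|$ regime by absorbing both sides into the error weight—is sound and genuinely different from the paper's route. However, there is a real gap in your Case~1 argument as written.

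You state the bound $|r|\le \tilde C(1+a^6+b^6+c^6)/p^3$ and then assert that $|z|$ stays bounded when $|a|\le p^{1/2}$. But the hypothesis only gives $|b|+|c|\le Cp$, so $b^6/p^3$ can be of order $p^3$: your stated bound on $r$ therefore does \emph{not} imply that $|r|$ (hence $|z|$) is bounded. The same weakness recurs when you expand $z^2/2$ and $|z|^3$: your bound on $r$ yields only $r^2\le C(1+a^6+b^6+c^6)^2/p^6$, which is not $O\big((1+a^6+b^6+c^6)/p^3\big)$ in general. The fix is to track $r$ more carefully rather than collapsing it into a single crude estimate: every monomial in $r$ involving $b$ or $c$ carries at least one extra factor of $1/p$ (since $b,c$ enter $\xi$ at order $1/p^2,1/p^3$), so those contributions are genuinely $O(1/p)$; the only potentially large piece is the $a^4/p^3$ coming from the fourth-order remainder, and that is $O(1/p)$ in Case~1. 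With this sharper bookkeeping your argument does go through; the fifth-order expansion is also unnecessary, third order suffices.

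By contrast, the paper avoids the case split entirely via a sign observation: the integral remainder
\[
\log(1+x)-\Big(x-\tfrac{x^2}{2}+\tfrac{x^3}{3}\Big)=-\int_0^x\frac{(x-t)^3}{(1+t)^4}\,dt
\]
is \emph{nonpositive}. Hence $y:=p\log(1+\xi)-a$ is uniformly bounded from above (the dangerous $a^4/p^3$ term only pushes $y$ down), and in $e^y=1+y+\tfrac{y^2}{2}+\tfrac12\int_0^y(y-t)^2e^t\,dt$ one has $e^t\le e^{C}$ on the integration range, giving the cubic remainder $O(|y|^3)$ with no restriction on $|a|$. This sign trick is what buys the paper a clean, case-free proof; your route trades that elegance for a more robust but more delicate bookkeeping argument.
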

\begin{proof} 
By writing
$$\log (1+x)=x-\frac{x^2}2+\frac{x^3}3-\int_0^x\frac{(x-t)^3}{(1+t)^4}\mathrm dt,$$
with $x=\frac ap+\frac b{p^2}+\frac c{p^3}$, we have
\begin{eqnarray*}
-\frac{x^2}2&=&-\frac{a^2}{2p^2}-\frac{ab}{p^3}+O\left(\frac{a^2+b^2+c^2}{p^4}\right),\\
\frac{x^3}3&=&\frac{a^3}{3p^3}+O\left(\frac{|a|^3+|b|^3+|c|^3}{p^4}\right);
\end{eqnarray*}
Moreover, since \eqref{abc} yields $\frac1C\le1+x\le C$, then
$$0\le\int_0^x\frac{(x-t)^3}{(1+t)^4}\mathrm dt=\int_0^xO\left(|x-t|^3\right)\mathrm dt=O\left(x^4\right)=O\left(\frac{a^4+b^4+c^4}{p^4}\right),$$
therefore,
\begin{eqnarray}
\nonumber y&:=&p\log \left(1+\frac ap+\frac b{p^2}+\frac c{p^3}\right)-a\\
\nonumber&=&\frac1p\left(b-\frac{a^2}2\right)+\frac1{p^2}\left(c-ab+\frac{a^3}3\right)+O\left(\frac{1+|a|^3+|b|^3+|c|^3}{p^3}\right)\\
\label{bigo}&+&O_{\le0}\left(\frac{a^4+b^4+c^4}{p^3}\right).
\end{eqnarray}
Now, we write
$$e^y=1+y+\frac{y^2}2+\frac12\int_0^y(y-t)^2e^t\mathrm dt,$$
with
$$\frac{y^2}2=\frac1{2p^2}\left(b-\frac{a^2}2\right)^2+O\left(\frac{1+|a|^5+|b|^3+|c|^3}{p^3}\right);$$
from \eqref{abc} and the fact that the remainder in \eqref{bigo} is negative we get that $y$ is bounded from above, therefore
$$\int_0^y(y-t)^2e^t\mathrm dt=O\left(|y|^3\right)=O\left(\frac{1+a^6+b^6+c^6}{p^3}\right),$$
hence
\begin{eqnarray*}
&&\left(1+\frac ap+\frac b{p^2}+\frac c{p^3}\right)^p\\
&=&e^ae^y\\
&=&e^a\left(1+\frac1p\left(b-\frac{a^2}2\right)+\frac1{p^2}\left(c-ab+\frac{a^3}3+\frac12\left(b-\frac{a^2}2\right)^2\right)\right.\\
&+&\left.O\left(\frac{1+a^6+b^6+c^6}{p^3}\right)\right)
\end{eqnarray*}
\end{proof}

\subsection{The error close to the positive peak (i.e. the origin)}\label{Section:errorOrigin}
\begin{lemma}\label{in1} 
There exist $\theta\in(0,1),C>0$ such that
$$|y|\le\frac1{\alpha^\theta}\qquad\Rightarrow\qquad|\mathscr E(\alpha y)|\le\frac C{\alpha^2p^4}\frac{\log ^6(|y|+2)}{\left(|y|^2+1\right)^2}$$
\end{lemma}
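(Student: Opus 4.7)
The plan is to pass to the rescaled variable $y = x/\alpha$ and show that, on the set $\{|y|\le\alpha^{-\theta}\}$, the error arises from a near-perfect cancellation between $\Delta\Upsilon$ and $|\Upsilon|^{p-1}\Upsilon$ up to third order in $1/p$, the leftover being controlled by the remainder of Lemma \ref{taylor}. On this scale $|x|\le \alpha^{1-\theta}$ is much smaller than $\rho$ by \eqref{alphatheta} and \eqref{betarho}, so the ``negative'' blocks $\Upsilon_2$ are, via the second triple of estimates in Lemma \ref{pupvpw}, essentially additive constants whose Laplacians are exponentially small; only the ``positive'' blocks $\tau(\mathrm PU_1 + \mathrm PV_1/p + \mathrm PW_1/p^2)$ drive the analysis.

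First I would use Lemma \ref{pupvpw} to write
$$
\Upsilon(\alpha y) \;=\; \tau\Bigl[U(y) + \tfrac{V(y)}{p} + \tfrac{W(y)}{p^2}\Bigr] + \tau\Bigl[a + \tfrac{b}{p} + \tfrac{c}{p^2}\Bigr] + R(y),
$$
where $a,b,c$ are explicit constants depending on $\alpha,\beta,\rho,\eta,\mathcal C_1,\mathcal C_2,H(0,0)$ and $R(y)$ collects the polynomial remainders coming from the projection estimates and from the constant approximation of $\Upsilon_2$, uniformly negligible on the scale considered. The third and fourth equations of \eqref{param} have been designed precisely so that $a = p + O(1/p)$, and so that $b,c$ are $O(1)$ quantities at the next orders. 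Factoring $\tau p$ out gives $\Upsilon(\alpha y) = \tau p\bigl(1 + A/p + B/p^2 + C/p^3\bigr)$ with $A:=U(y)+a-p$, $B:=V(y)+b$, $C:=W(y)+c$; on $\{|y|\le\alpha^{-\theta}\}$ one has $|U(y)|\le C\theta p$, so for $\theta$ small the hypothesis \eqref{abc} of Lemma \ref{taylor} is met. Since the first relation of \eqref{param} gives $(\tau p)^p = \tau/\alpha^2$, multiplying by $\alpha^2$ yields
$$
\alpha^2 |\Upsilon|^{p-1}\Upsilon(\alpha y) = \tau e^{U(y)}\Bigl[1 + \tfrac{V(y)-f_1(U(y))}{p} + \tfrac{W(y)-f_2(U(y),V(y))}{p^2}\Bigr] + \tau\,O\!\left(\tfrac{e^{U(y)}\log^6(|y|+2)}{p^3}\right),
$$
where the choices of $f_1,f_2$ in \eqref{vw} are exactly those producing the two explicit correction terms.

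Next I would compute $\alpha^2\Delta\Upsilon(\alpha y)$. Since $\Delta\mathrm P\varphi = \Delta\varphi$, combining \eqref{liouvilleq} and \eqref{vw} (written for the rescalements $V_1,W_1$) gives
$$
\alpha^2\Delta\Upsilon_1(\alpha y) = -\tau e^{U(y)}\Bigl[1 + \tfrac{V(y)-f_1(U(y))}{p} + \tfrac{W(y)-f_2(U(y),V(y))}{p^2}\Bigr],
$$
while the contribution of $\Delta\Upsilon_2$ at $x=\alpha y$ is bounded by $C\alpha^{2k}\beta^{2k}|y|^{2k-2}/\rho^{4k}$, which is exponentially small in $p$ by \eqref{betarho} and \eqref{alpharhok}. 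Summing, the three leading orders cancel identically and only the $O(\log^6/p^3)$ remainder of Lemma \ref{taylor} survives; using $\tau\sim 1/p$ from Lemma \ref{asymptotic} and $e^{U(y)}=8/(|y|^2+1)^2$, and dividing by $\alpha^2$, produces the announced bound.

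The hard part is the expansion above: one has to check that the fine-tuning in \eqref{param} — especially the corrections $\mathcal C_1/p$ and $\mathcal C_2/p^2$ in the third and fourth equations — exactly absorbs every constant appearing at orders $1$, $1/p$ and $1/p^2$, and that the polynomial remainder $R(y)$ remains dominated by the target right-hand side $C\alpha^{-2}p^{-4}\log^6(|y|+2)(|y|^2+1)^{-2}$ in the full range $|y|\le\alpha^{-\theta}$ (including the awkward region where $|y|$ is a small negative power of $\alpha$). Once this bookkeeping is done, the cancellation of the three leading orders is automatic from the equations satisfied by $V$ and $W$, and the Taylor remainder of Lemma \ref{taylor} closes the proof.
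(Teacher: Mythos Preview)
Your proposal is correct and follows essentially the same route as the paper: rescale to $y=x/\alpha$, use Lemma \ref{pupvpw} to reduce $\Upsilon(\alpha y)$ to $\tau p\bigl(1+U(y)/p+V(y)/p^2+W(y)/p^3+O(e^{-\varepsilon p})\bigr)$ via the parameter relations \eqref{param}, apply the Taylor expansion of Lemma \ref{taylor}, and cancel the first three orders against $\Delta\Upsilon_1$ using the equations defining $V,W$, while $\Delta\Upsilon_2$ is exponentially small by Lemma \ref{alphabetarho}. Two minor remarks: (i) only the \emph{third} equation of \eqref{param} is used at the positive peak (the fourth is the analogue for the negative peaks), and (ii) your phrasing ``$a=p+O(1/p)$'' is slightly misleading---the whole package $a+b/p+c/p^2$ equals $p$ \emph{exactly} by construction, with all remainders pushed into the exponentially small $R(y)$; this is what you in fact use when you write the clean expansion with $U,V,W$ and no leftover constants.
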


\begin{proof} 
From Lemma \ref{pupvpw} and the fact that $|\alpha y|\le\alpha^{1-\theta}$ we get:
\begin{eqnarray*}
\Upsilon(\alpha y)&=&\tau\left(\mathrm PU_1(\alpha y)+\frac{\mathrm PV_1(\alpha y)}p+\frac{\mathrm PW_1(\alpha y)}{p^2}\right)\\
&-&\tau\eta\left(\mathrm PU_2(\alpha y)+\frac{\mathrm PV_2(\alpha y)}p+\frac{\mathrm PW_2(\alpha y)}{p^2}\right)\\
&=&\tau\left(U(y)+\frac{V(y)}p+\frac{W(y)}{p^2}+2\pi\left(4-\frac{\mathcal C_1}p-\frac{\mathcal C_2}{p^2}\right)H(0,0)-\log 8\right.\\
&&\left.+\left(-4+\frac{\mathcal C_1}p+\frac{\mathcal C_2}{p^2}\right)\log \alpha+O\left(\alpha^{1-\theta}\right)\right)\\
&-&\tau\eta\left(\left(-4+\frac{\mathcal C_1}p+\frac{\mathcal C_2}{p^2}\right)k\log \rho+2k\pi\left(4-\frac{\mathcal C_1}p-\frac{\mathcal C_2}{p^2}\right)H(0,0)\right.\\
&&\left.+O\left(\frac{\alpha^{k(1-\theta)}}{\rho^k}+\frac{\beta^k}{\rho^k}+\rho^k\right)\right).
\end{eqnarray*}
By Lemmas \ref{asymptotic} and \ref{alphabetarho}, the remainder decays exponentially fast with respect to $p$ if $\theta$ is chosen small enough; therefore, using the relations \eqref{param} between the parameters we get:
\begin{equation}\label{estimuvw}
\Upsilon(\alpha y)= \tau p\left(1+\frac{U(y)}{p}+\frac{V(y)}{p^2}+\frac{W(y)}{p^3}+O\left(e^{-\varepsilon p}\right)\right),
\end{equation}
with $\varepsilon$ possibly smaller than in Lemma \ref{alphabetarho}.\\
Now, in view of the explicit expression \eqref{bubble} for $U$ and the logarithmic behaviors \eqref{vlog}, \eqref{wlog} of $V,W$, we are in position to apply Lemma \ref{taylor} with $a=U(y),b=V(y),c=W(y)$ and $|y|\le e^{\left(1-\frac1C\right)\frac p4}$, which holds true if $|y|\le\frac1{\alpha^\theta}$, after taking a possibly smaller range of $\theta$:
\begin{eqnarray}
\nonumber&&\left(1+\frac{U(y)}p+\frac{V(y)}{p^2}+\frac{W(y)}{p^3}\right)^p\\
\nonumber&=&e^{U(y)}\left(1+\frac1p\left(V(y)-\frac{U(y)^2}2\right)\right.\\
\nonumber&&\left.+\frac1{p^2}\left(W(y)-U(y)V(y)+\frac{U(y)^3}3+\frac12\left(V(y)-\frac{U(y)^2}2\right)^2\right)\right)\\
\label{uvw}&+&O\left(\frac1{\left(|y|^2+1\right)^2}\frac{\log ^6(|y|+2)}{p^3}\right).
\end{eqnarray}
Therefore, by the choice of parameters \eqref{param} we get the estimate
\begin{eqnarray*}
&&\Delta\Upsilon_1(\alpha y)+|\Upsilon(\alpha y)|^{p-1}\Upsilon(\alpha y)\\
&=&\frac\tau{\alpha^2}\left(\Delta U(y)+\frac1p\Delta V(y)+\frac1{p^2}\Delta W(y)\right)\\
&+&(\tau p)^p\left(1+\frac{U(y)}{p}+\frac{V(y)}{p^2}+\frac{W(y)}{p^3}+O\left(e^{-\varepsilon p}\right)\right)^p\\
&=&\frac\tau{\alpha^2}\left(-e^{U(y)}\left(1+\frac1p\left(V(y)-\frac{U(y)^2}2\right)\right.\right.\\
&&\left.\left.+\frac1{p^2}\left(W(y)-U(y)V(y)+\frac{U(y)^3}3+\frac12\left(V(y)-\frac{U(y)^2}2\right)^2\right)\right)\right.\\
&&+\left.\left(1+\frac{U(y)}p+\frac{V(y)}{p^2}+\frac{W(y)}{p^3}+O\left(\frac{e^{-\varepsilon p}}p\right)\right)^p\right)\\
&=&O\left(\frac\tau{\alpha^2p^3}\frac{\log ^6(|y|+2)}{\left(|y|^2+1\right)^2}\right).
\end{eqnarray*}
To deal with the other term defining $\mathscr E$, we observe that $U,V,W$ grow logarithmically due to \eqref{bubble}, \eqref{vlog}, \eqref{wlog} whereas $\frac\alpha{\rho^{2k\eta}},\frac\beta\rho$ vanish due to Lemma \ref{alphabetarho}; therefore,
\begin{eqnarray*}
\Delta\Upsilon_2(\alpha y)&=&O\left(\frac{\tau\alpha^{2k-2}|y|^{2k-2}}{\beta^{2k}}e^U\left(1+U^4+V^2+|W|\right)\left(\frac{\alpha^ky^k-\rho^k}{\beta^k}\right)\right)\\
&=&O\left(\frac{\alpha^{(1-\theta)(2k-2)}}{\beta^{2k}}\frac{\log ^{4k}\frac\rho\beta}{\left(\frac\rho\beta\right)^{4k}}\right)\\
&=&O\left(\frac{\rho^{(1-\theta)(2k-2)k\eta}}{\rho^{2k}}\right)\\
&=&O\left(e^{-\varepsilon p}\right),
\end{eqnarray*}
after possibly taking a smaller $\theta$. From this, we can conclude
\begin{eqnarray*}
\mathscr E(\alpha y)&=&\Delta\Upsilon_1(\alpha y)+\Upsilon(\alpha y)^p-\Delta\Upsilon_2(\alpha y)\\
&=&O\left(\frac\tau{\alpha^2p^3}\frac{\log ^6(|y|+2)}{\left(|y|^2+1\right)^2}+e^{-\varepsilon p}\right)\\
&=&O\left(\frac1{\alpha^2p^4}\frac{\log ^6(|y|+2)}{\left(|y|^2+1\right)^2}\right).
\end{eqnarray*}
\end{proof}

\subsection{The error close to the negative peaks (i.e. the roots of $\rho^k$)}\label{Section:errorRoots}

\begin{lemma}\label{in2} 
There exist $\theta\in(0,1),C>0$ such that
$$|y|\le\frac1{\beta^{k\theta}}\qquad\Rightarrow\qquad\left|\mathscr E\left(\sqrt[k]{\beta^ky+\rho^k}\right)\right|\le C\frac{\rho^{2k-2}}{\beta^{2k}p^4}\frac{\log ^6(|y|+2)}{\left(|y|^2+1\right)^2}$$
\end{lemma}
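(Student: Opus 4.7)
The plan is to mirror the argument of Lemma \ref{in1}, with the roles of $\Upsilon_1$ and $\Upsilon_2$ interchanged: now $\Upsilon_2$ is the dominant building block near the negative peaks, while the contribution of $\Upsilon_1$ is exponentially small. By the $k$-symmetry, it suffices to work near the peak $x=\rho$, via the change of variables $x=\sqrt[k]{\beta^k y+\rho^k}$, which parametrizes a neighborhood of $\rho$ by $y$ ranging over $|y|\le \beta^{-k\theta}$.

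First I would expand $\Upsilon(\sqrt[k]{\beta^k y+\rho^k})$ using Lemma \ref{pupvpw}. The projections $\mathrm PU_2,\mathrm PV_2,\mathrm PW_2$ reduce at these points to $U(y),V(y),W(y)$ modulo constants involving $\log\beta$ and $H(0,0)$; meanwhile $\mathrm PU_1,\mathrm PV_1,\mathrm PW_1$ at the same points are given by \eqref{pu1out} and its analogues and contribute only constants involving $\log\rho$. All remainders are exponentially small in $p$ thanks to Lemmas \ref{asymptotic} and \ref{alphabetarho}. The second and fourth equations in \eqref{param} were tailored precisely so that the constant contributions cancel, producing an analogue of \eqref{estimuvw}:
\[
\Upsilon(\sqrt[k]{\beta^k y+\rho^k})=-\tau\eta p\left(1+\frac{U(y)}{p}+\frac{V(y)}{p^2}+\frac{W(y)}{p^3}+O(e^{-\varepsilon p})\right).
\]

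Next, since $\Upsilon<0$ in this region, I would write $|\Upsilon|^{p-1}\Upsilon=-|\Upsilon|^p$ and apply Lemma \ref{taylor} with $a=U(y)$, $b=V(y)$, $c=W(y)$; the constraint $|y|\le\beta^{-k\theta}$ combined with $\beta$ being exponentially small in $p$ forces $U(y)\ge -(1-1/C)p$ once $\theta$ is small enough. I would then compute $\Delta\Upsilon_2$ using \eqref{liouvillesing} for $U_2$ and the equations \eqref{vw} for $V_2,W_2$. Two observations are crucial: $e^{U_2(x)}=(k^2/\beta^{2k})e^{U(y)}$, and $|x|^{2k-2}=\rho^{2k-2}(1+O(e^{-\varepsilon p}))$ throughout the region (because $\beta^k|y|/\rho^k\le\beta^{k(1-\theta)}/\rho^k$ is exponentially small by Lemma \ref{alphabetarho}). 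Combining with the second equation in \eqref{param}, which yields $(\tau\eta p)^p=\tau\eta k^2\rho^{2k-2}/\beta^{2k}$, the leading orders of $\Delta\Upsilon_2$ and $|\Upsilon|^{p-1}\Upsilon$ cancel exactly --- this is precisely the matching built into the definitions of $f_1,f_2$ in \eqref{vw}. The residual has size $\tau\eta\,k^2\rho^{2k-2}/(\beta^{2k} p^3)\cdot\log^6(|y|+2)/(|y|^2+1)^2$, and $\tau\eta\sim 1/p$ from Lemma \ref{asymptotic} then yields the desired prefactor $\rho^{2k-2}/(\beta^{2k}p^4)$. The leftover term $\Delta\Upsilon_1$ plays the role that $\Delta\Upsilon_2$ played at the end of Lemma \ref{in1}: since $U_1,V_1,W_1$ concentrate at scale $\alpha$ around the origin and the current region lies at distance of order $\rho\gg\alpha$ from there, $\Delta\Upsilon_1$ decays as a power of $\alpha/\rho$, hence is exponentially small by Lemma \ref{alphabetarho}.

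The main obstacle will be the careful replacement $|x|^{2k-2}\mapsto\rho^{2k-2}$ with exponentially small relative error uniformly for $|y|\le\beta^{-k\theta}$, so that the matching with $(\tau\eta p)^p=\tau\eta k^2\rho^{2k-2}/\beta^{2k}$ produces the cancellation down to the required order. The parameter $\theta$ must be chosen small enough that simultaneously: (i) all remainders in Lemma \ref{pupvpw} stay exponentially small; (ii) the hypotheses of Lemma \ref{taylor} are verified; and (iii) the ratio $|x|^{2k-2}/\rho^{2k-2}$ remains exponentially close to one.
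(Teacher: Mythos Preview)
Your proposal is correct and follows essentially the same route as the paper's proof: expand $\Upsilon$ near the negative peaks via Lemma~\ref{pupvpw} and the relations \eqref{param}, apply Lemma~\ref{taylor}, match $-\Delta\Upsilon_2$ with $|\Upsilon|^{p-1}\Upsilon$ using the second equation in \eqref{param}, and finally dispose of $\Delta\Upsilon_1$ as an exponentially small remainder. You also correctly flag the replacement $|x|^{2k-2}=\rho^{2k-2}(1+O(e^{-\varepsilon p}))$ as the main technical point; this is exactly what the paper handles (implicitly, via $\left|\sqrt[k]{\beta^ky+\rho^k}\right|^{2k-2}$). One small caution: when you say $\Delta\Upsilon_1$ ``decays as a power of $\alpha/\rho$'', note that the actual bound is of order $\tau\alpha^2/\rho^4$ up to logs, and since $\rho\to 0$ this is not literally a power of $\alpha/\rho$; the paper controls it via \eqref{alpharhok}, but a direct check using the explicit rates in Lemma~\ref{asymptotic} also works.
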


\begin{proof} 
The argument is similar to Lemma \ref{in1}.\\
From Lemma \ref{pupvpw}, \eqref{param} and the exponential decays given by Lemmas \ref{asymptotic}, \ref{alphabetarho} we get, for small $\theta$:
\begin{eqnarray*}
&&\Upsilon\left(\sqrt[k]{\beta^ky+\rho^k}\right)\\
&=&\tau\left(\mathrm PU_1\left(\sqrt[k]{\beta^ky+\rho^k}\right)+\frac{\mathrm PV_1\left(\sqrt[k]{\beta^ky+\rho^k}\right)}p+\frac{\mathrm PW_1\left(\sqrt[k]{\beta^ky+\rho^k}\right)}{p^2}\right)\\
&-&\tau\eta\left(\mathrm PU_2\left(\sqrt[k]{\beta^ky+\rho^k}\right)+\frac{\mathrm PV_2\left(\sqrt[k]{\beta^ky+\rho^k}\right)}p+\frac{\mathrm PW_2\left(\sqrt[k]{\beta^ky+\rho^k}\right)}{p^2}\right)\\
&=&\tau\left(\left(-4+\frac{\mathcal C_1}p+\frac{\mathcal C_2}{p^2}\right)\log \rho+2\pi\left(4-\frac{\mathcal C_1}p-\frac{\mathcal C_2}{p^2}\right)H(0,0)\right.\\
&&\left.+O\left(\frac{\beta^{k(1-\theta)}}{\rho^k}+\frac\alpha\rho+\rho^k\right)\right)\\
&-&\tau\eta\left(U(y)+\frac{V(y)}p+\frac{W(y)}{p^2}+2k\pi\left(4-\frac{\mathcal C_1}p-\frac{\mathcal C_2}{p^2}\right)H(0,0)-\log 8\right.\\
&&\left.+\left(-4+\frac{\mathcal C_1}p+\frac{\mathcal C_2}{p^2}\right)k\log \beta+O\left(\beta^{k(1-\theta)}+\rho^k\right)\right)\\
&=&p\tau\eta\left(1+\frac{U(y)}{p}+\frac{V(y)}{p^2}+\frac{W(y)}{p^3}+O\left(e^{-\frac\varepsilon2p}\right)\right).
\end{eqnarray*}
Since $|y|\le\frac1{\beta^{k\theta}}$, if $\theta$ is small enough we are able to apply Lemma \ref{taylor} to get again \eqref{uvw} and we get:
\begin{eqnarray*}
&&\Delta\Upsilon_2\left(\sqrt[k]{\beta^ky+\rho^k}\right)+\left|\Upsilon\left(\sqrt[k]{\beta^ky+\rho^k}\right)\right|^{p-1}\Upsilon\left(\sqrt[k]{\beta^ky+\rho^k}\right)\\
&=&\frac{\tau k^2\left|\sqrt[k]{\beta^ky+\rho^k}\right|^{2k-2}}{\beta^{2k}}\left(\Delta U(y)+\frac{\Delta V(y)}p+\frac{\Delta W(y)}{p^2}\right)\\
&+&(p\tau)^p\eta^p\left(1+\frac{U(y)}{p}+\frac{V(y)}{p^2}+\frac{W(y)}{p^3}+O\left(e^{-\varepsilon p}\right)\right)^p\\
&=&O\left(\frac{\tau k^2\rho^k}{\beta^{2k}}e^{U(y)}\left(U(y)^4+V(y)^2+|W(y)|\right)\right)\\
&+&O\left(\frac{\tau\rho^{2k-2}}{\beta^{2k}p^3}\frac{\log ^6(|y|+2)}{\left(|y|^2+1\right)^2}+e^{-\varepsilon p}\right)\\
&=&O\left(\frac{\rho^{2k-2}}{\beta^{2k}p^4}\frac{\log ^6(|y|+2)}{\left(|y|^2+1\right)^2}+e^{-\varepsilon p}\right);
\end{eqnarray*}
finally, from Lemmas \ref{asymptotic}, \ref{condeta} and \ref{alphabetarho} we can take $\sigma>0$ such that $\frac{\alpha^\sigma}{\rho^{4(k\eta-1)+\sigma}}\ge1$, therefore from the logarithmic growth of $U,V,W$ we get
\begin{eqnarray*}
\Delta\Upsilon_1\left(\sqrt[k]{\beta^ky+\rho^k}\right)&=&O\left(\frac\tau{\alpha^2}e^U\left(1+U^4+V^2+|W|\right)\left(\frac{\sqrt[k]{\beta^ky+\rho^k}}\alpha\right)\right)\\
&=&O\left(\frac1{\alpha^2}\frac{\log ^4\frac\rho\alpha}{\left(\frac\rho\alpha\right)^4}\right)\\
&=&O\left(\frac{\alpha^{2-\sigma}}{\rho^{4-\sigma}}\right)\\
&=&O\left(\frac{\alpha^2}{\rho^{4k\eta}}\right)\\
&=&O\left(e^{-2\varepsilon p}\right),
\end{eqnarray*}
which concludes the proof.
\end{proof}

\subsection{The error far away from the peaks}\label{Section:errorFaraway}
\begin{lemma}\label{out} 
There exist $\delta\in(0,1),C>0$ such that
$$|x|>\alpha,\,\left|x^k-\rho^k\right|>\beta^k,\qquad\Rightarrow\qquad|\mathscr E(x)|\le\frac Cp\left(\frac{\alpha^{4\delta}}{|x|^{2+4\delta}}+\frac{\beta^{4k\delta}|x|^{2k-2}}{\left|x^k-\rho^k\right|^{2+4\delta}}\right).$$
\end{lemma}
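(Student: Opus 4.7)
The plan is to split $\mathscr E=\Delta\Upsilon+|\Upsilon|^{p-1}\Upsilon$ and handle each piece separately in the outer region $R:=\{x\in\Omega:\ |x|>\alpha,\ |x^k-\rho^k|>\beta^k\}$.

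\medskip

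\emph{The Laplacian piece.} Differentiating the rescaled profiles and using the defining PDEs \eqref{liouvilleq}, \eqref{liouvillesing}, \eqref{vw} yields
\[
\Delta\Upsilon_1(x)=-\tau e^{U_1(x)}\!\left(1-\tfrac{f_1(U(x/\alpha))-V(x/\alpha)}{p}-\tfrac{f_2(U(x/\alpha),V(x/\alpha))-W(x/\alpha)}{p^2}\right),
\]
and analogously for $\Delta\Upsilon_2$ with the singular weight $|x|^{2k-2}$ and the variable $(x^k-\rho^k)/\beta^k$. From $e^{U_1(x)}\le 8\alpha^2/|x|^4$ on $R$, the logarithmic growth of $V,W$ recalled in \eqref{vlog}--\eqref{wlog}, and $\tau\le C/p$ from Lemma \ref{asymptotic}, I get $|\Delta\Upsilon_1(x)|\le C\alpha^2\log^6(|x|/\alpha+2)/(p|x|^4)$. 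The elementary inequality $\log^6(t+2)\le C_\delta t^{2-4\delta}$ valid for $t\ge 1$ and $\delta<1/2$ absorbs the log factor, producing the desired bound; $\Delta\Upsilon_2$ is handled symmetrically using $|x|^{2k-2}e^{U_2(x)}\le C|x|^{2k-2}\beta^{2k}/|x^k-\rho^k|^4$.

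\medskip

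\emph{The nonlinear piece.} I would partition $R$ into $R_1:=R\cap\{|x|\le c\rho\}$ near the origin, $R_2$ a neighborhood of the negative peaks, and $R_3:=R\setminus(R_1\cup R_2)$. On $R_3$, all projections $\mathrm PU_i,\mathrm PV_i,\mathrm PW_i$ are uniformly bounded by Lemma \ref{pupvpw}, hence $|\Upsilon(x)|\le C\tau=O(1/p)$ and $|\Upsilon|^p=O(p^{-p})$ is negligible. On $R_1$, combining Lemma \ref{pupvpw} (in particular \eqref{pu2}) with the parameter relations \eqref{param} reproduces the expansion used at the start of the proof of Lemma \ref{in1}, now valid throughout $|y|=|x|/\alpha\le\rho/(2\alpha)$:
\[
\Upsilon(\alpha y)=p\tau\Bigl(1+\tfrac{U(y)}{p}+\tfrac{V(y)}{p^2}+\tfrac{W(y)}{p^3}+O(e^{-\varepsilon p})\Bigr).
\]
Where Lemma \ref{taylor} applies (i.e.\ $|U(y)|\le(1-1/C)p$), it gives $|\Upsilon(x)|^p\le C\tau e^{U_1(x)}(1+o(1))$, already absorbed into the same bound as $\Delta\Upsilon_1$. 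Where Lemma \ref{taylor} fails, the identity $a_k-r_k=\tfrac14+\tfrac1{k+1}\log\tfrac{k-1}{2}$ (from \eqref{parax}) yields $|1+U(y)/p|\le 4(a_k-r_k)-1+o(1)=:C_1+o(1)$ with $C_1=\tfrac{4}{k+1}\log\tfrac{k-1}{2}\in(0,1)$, so $|\Upsilon|^p\le(C_1 p\tau)^p(1+o(1))^p$. The symmetric analysis on $R_2$ uses the second relation in \eqref{param} and the expansion \eqref{pu1out}.

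\medskip

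The main obstacle will be the ``outside Taylor'' portion of $R_1$ (and its counterpart in $R_2$). There the naive bound $|\Upsilon/(p\tau)|\le 1$ would merely give $|\Upsilon|^p\le\tau/\alpha^2$, which at $|x|=c\rho$ is too large to fit inside the target. What rescues the estimate is the strict inequality $C_1<1$ together with the more refined numerical inequality $C_1\, t_k<e^{2r_k}$, verified directly for $k=4,5$; a comparison of exponential rates then shows that any $\delta$ positive and smaller than $(2r_k-\log(C_1 t_k))/(4(a_k-r_k))$ makes the residual strictly dominated by the target.
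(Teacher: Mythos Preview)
Your handling of the Laplacian piece is fine and matches the paper's argument. The gap is in the nonlinear piece, and it is not a minor one.

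\textbf{The $R_3$ claim is false.} You assert that on $R_3=R\setminus(R_1\cup R_2)$ all projections $\mathrm PU_i,\mathrm PV_i,\mathrm PW_i$ are uniformly bounded, hence $|\Upsilon|\le C\tau=O(1/p)$. But Lemma~\ref{pupvpw} gives, for instance, $\mathrm PU_1(x)=-4\log|x|+8\pi H(0,0)+O(|x|+\alpha^2/|x|^2)$ once $|x|\gg\alpha$, and $-4\log|x|$ is of size $\log(1/\rho)\sim r_kp$ whenever $|x|\sim\rho$. Since $R_1=\{|x|\le c\rho\}$ and $R_2$ is only a neighbourhood of the $k$ roots, the region $R_3$ contains points with $|x|$ comparable to $\rho$ (e.g.\ $|x|=\rho$ at angles halfway between consecutive roots), and there $|\Upsilon(x)|$ is of order a fixed positive constant, not $O(1/p)$. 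So $|\Upsilon|^p$ is not negligible on $R_3$; this part of your decomposition does not close.

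\textbf{The paper's device is different and avoids the decomposition.} Instead of splitting into regions, the paper produces two \emph{global} one-sided bounds valid on all of $R$:
\[
\Upsilon(x)\le p\tau\Bigl(1-\tfrac{4-\sigma}{p}\log\tfrac{|x|}{\alpha}+\tfrac{4k\eta}{p}\log\bigl(\tfrac{|x|}{\rho}+1\bigr)\Bigr),
\]
and a symmetric lower bound in terms of $|x^k-\rho^k|/\beta^k$ and $|x^k-\rho^k|/\rho^k$. The essential point is that each bound keeps the contribution of the \emph{other} bubble as the cross-term $\log(|x|/\rho+1)$ (resp.\ $\log(|x^k-\rho^k|/\rho^k+1)$). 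Then the elementary inequality $(1+t/p)^p\le e^t$ (no Taylor lemma needed) gives
\[
|\Upsilon(x)|^p\le\frac{C}{p}\Bigl(\frac{\alpha^{2-\sigma}}{|x|^{4-\sigma}}\bigl(\tfrac{|x|^{4k\eta}}{\rho^{4k\eta}}+1\bigr)+\rho^{2k-2}\frac{\beta^{(2-\sigma)k}}{|x^k-\rho^k|^{4-\sigma}}\bigl(\tfrac{|x^k-\rho^k|^{4/(k\eta)}}{\rho^{4/\eta}}+1\bigr)\Bigr),
\]
and the cross-terms are absorbed using the crucial estimate \eqref{alpharhok}, $\alpha/\rho^{2k\eta}\le e^{-\varepsilon p}$, together with \eqref{eta1}. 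This replaces your case-by-case numerical check $C_1t_k<e^{2r_k}$ (which is delicate for $k=5$ and would need to be redone uniformly in $\eta$ near $\eta_{\infty,k}$) by a structural inequality already established in Lemma~\ref{alphabetarho}.

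In short: your $R_3$ step fails, and the fix is not to repair the decomposition but to keep both bubbles in the bound throughout and invoke \eqref{alpharhok}.
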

\begin{proof} 
We write $\mathscr E=\Delta\Upsilon_1-\Delta\Upsilon_2+|\Upsilon|^{p-1}\Upsilon$ and we estimate separately each term.\\
In view of the logaritmic behavior of $U,V,W$, we have
\begin{eqnarray}
\nonumber\Delta\Upsilon_1(x)&=&O\left(\frac\tau{\alpha^2}e^U\left(1+U^4+V^2+|W|\right)\left(\frac x\alpha\right)\right)\\
\nonumber&=&O\left(\frac\tau{\alpha^2}\frac{\log ^4\left|\frac x\alpha\right|}{\left|\frac x\alpha\right|^4}\right)\\
\label{out1}&=&O\left(\frac1p\frac\alpha{|x|^3}\right)
\end{eqnarray}
and, similarly,
\begin{eqnarray}
\nonumber\Delta\Upsilon_2(x)&=&O\left(\frac\tau{\alpha^2}e^U\left(1+U^4+V^2+|W|\right)\left(\frac x\alpha\right)\right)\\
\nonumber&=&O\left(\frac{\tau|x|^{2k-2}}{\beta^{2k}}\frac{\log ^4\left|\frac {x^k-\rho^k}{\beta^k}\right|}{\left|\frac {x^k-\rho^k}{\beta^k}\right|^4}\right)\\
\label{out2}&=&O\left(\frac1p\frac{\beta^k|x|^{2k-2}}{\left|x^k-\rho^k\right|^3}\right).
\end{eqnarray}
As for the nonlinear term, in view of the relations \eqref{param} between the parameters we get
\begin{eqnarray*}
\Upsilon(x)&=&\tau\left(p+U\left(\frac x\alpha\right)+\frac{V\left(\frac x\alpha\right)}p+\frac{W\left(\frac x\alpha\right)}{p^2}+4\eta\log \left|\frac{x^k}{\rho^k}-1\right|\right.\\
&&\left.+O\left(|x|+\alpha+\frac{\beta^k}{\left|x^k-\rho^k\right|}+\rho^k\right)\right)\\
&\le&\tau\left(p+\left(-4+O\left(\frac1p\right)\right)\log \left|\frac x\alpha\right|+4k\eta\log \left(\frac{|x|}\rho+1\right)+O(1)\right)\\
&\le&p\tau\left(1-\frac{4-\sigma}p\log \left|\frac x\alpha\right|+\frac{4k\eta}p\log \left(\frac{|x|}\rho+1\right)\right),
\end{eqnarray*}
for any small $\sigma>0$, and, similarly,
\begin{eqnarray*}
\Upsilon(x)&\ge&-\tau\eta\left(p+U\left(\frac{x^k-\rho^k}{\beta^k}\right)+\frac{V\left(\frac{x^k-\rho^k}{\beta^k}\right)}p+\frac{W\left(\frac{x^k-\rho^k}{\beta^k}\right)}p\right.\\
&&\left.+\frac4\eta\log \left|\frac x\rho\right|+O\left(|x|+\frac\alpha{|x|}+\beta^k+\rho^k\right)\right)\\
&\ge&-p\tau\eta\left(1-\frac{4-\sigma}p\log \left|\frac{x^k-\rho^k}{\beta^k}\right|+\frac4{k\eta p}\log \left(\frac{\left|x^k-\rho^k\right|}{\rho^k}+1\right)\right);
\end{eqnarray*}
therefore, in view of the elementary inequality $\left(1+\frac xp\right)^p\le e^x$, we get
\begin{eqnarray*}
|\Upsilon(x)|^p&\le&(p\tau)^p\max\left\{1-\frac{4-\sigma}p\log \left|\frac x\alpha\right|+\frac{4k\eta}p\log \left(\frac{|x|}\rho+1\right),\right.\\
&&\left.\eta\left(\left(1-\frac{4-\sigma}p\log \left|\frac{x^k-\rho^k}{\beta^k}\right|\right)+\frac4{k\eta p}\log \left(\frac{\left|x^k-\rho^k\right|}{\rho^k}+1\right)\right)\right\}^p\\
&\le&\max\left\{(p\tau)^p\frac{\alpha^{4-\sigma}}{|x|^{4-\sigma}}\left(\frac{|x|}\rho+1\right)^{4k\eta},\right.\\
&&\left.(p\tau\eta)^p\frac{\beta^{(4-\sigma)k}}{\left|x^k-\rho^k\right|^{4-\sigma}}\left(\frac{\left|x^k-\rho^k\right|}{\rho^k}+1\right)^\frac4{k\eta}\right\}\\
&\le&\frac Cp\left(\frac{\alpha^{2-\sigma}}{|x|^{4-\sigma}}\left(\frac{|x|^{4k\eta}}{\rho^{4k\eta}}+1\right)+\rho^{2k-2}\frac{\beta^{(2-\sigma)k}}{\left|x^k-\rho^k\right|^{4-\sigma}}\left(\frac{\left|x^k-\rho^k\right|^\frac4{k\eta}}{\rho^\frac4\eta}+1\right)\right).
\end{eqnarray*}
Now, since \eqref{eta1} yields $4k\eta-2+\sigma+4\delta>0$, then
$$\frac{\alpha^{2-\sigma}}{|x|^{4-\sigma}}\frac{|x|^{4k\eta}}{\rho^{4k\eta}}=|x|^{4k\eta-2+\sigma+4\delta}\frac{\alpha^{2-\sigma-4\delta}}{\rho^{4k\eta}}\frac{\alpha^{4\delta}}{|x|^{2+4\delta}}\le C\frac{\alpha^{2-\sigma-4\delta}}{\rho^{4k\eta}}\frac{\alpha^{4\delta}}{|x|^{2+4\delta}};$$
thanks to the estimate \eqref{alpharhok} and the exponential decay of $\alpha$, we can take $\delta,\sigma$ so that $\frac{\alpha^{2-\sigma-4\delta}}{\rho^{4k\eta}}\le1$. Similarly, from \eqref{eta1} we have $\frac4{k\eta}-2+\sigma+4\delta<0$ for small $\sigma,\delta$, hence
\begin{eqnarray*}
&&\rho^{2k-2}\frac{\beta^{(2-\sigma)k}}{\left|x^k-\rho^k\right|^{4-\sigma}}\frac{\left|x^k-\rho^k\right|^\frac4{k\eta}}{\rho^\frac4\eta}\\
&\le&\left(2|x|\right)^{2k-2}\left|x^k-\rho^k\right|^{\frac4{k\eta}-2+\sigma+4\delta}\frac{\beta^{(2-\sigma-4\delta)k}}{\rho^\frac4\eta}\frac{\beta^{4k\delta}}{\left|x^k-\rho^k\right|^{2+4\delta}}\\
&\le&C\frac{\beta^{\frac{4k}\eta}}{\rho^\frac4\eta}\frac{\beta^{4k\delta}|x|^{2k-2}}{\left|x^k-\rho^k\right|^{2+4\delta}},\\
&\le&C\frac{\beta^{4k\delta}|x|^{2k-2}}{\left|x^k-\rho^k\right|^{2+4\delta}},
\end{eqnarray*}
because we have
$$|x|\ge\rho-\sqrt[k]{\left|x^k-\rho^k\right|}\ge\rho-\beta\ge\frac\rho2.$$
In conclusion, if $2-\sigma\le4\delta$, then
$$|\Upsilon(x)|^p\le\frac Cp\left(\frac{\alpha^{4\delta}}{|x|^{2+4\delta}}+\frac{\beta^{4k\delta}|x|^{2k-2}}{\left|x^k-\rho^k\right|^{2+4\delta}}\right)$$
which, together with \eqref{out1} and \eqref{out2}, concludes the proof.
\end{proof}

\subsection{The size of the error in a suitable norm} 
\label{Section:errorTotale}
For $h\in L^\infty(\Omega)$, we consider the weighted norm
\begin{equation}\label{norm}
\|h\|_{*}:=\sup_{x\in\Omega}\left|\frac{h(x)}{\frac{\alpha^{2\delta}}{\left(|x|^2+\alpha^2\right)^{1+\delta}}+\frac{\beta^{2k\delta}|x|^{2k-2}}{\left(|x^k-\rho^k|^2+\beta^{2k}\right)^{1+\delta}}}\right|,
\end{equation}
where $\delta\in(0,1)$ is given by Lemma \ref{out}.

This type of weighted norm is rather convenient to deal with concentration phenomena in planar domains (see, for instance \cite{EspositoMussoPistoiaJDE2006,pi-ric}).
A fundamental property of this norm is that, since
$$e^{U_1(x)}\le8\frac{\alpha^{2\delta}}{\left(|x|^2+\alpha^2\right)^{1+\delta}},\qquad|x|^{2k-2}e^{U_2(x)}\le8k^2\frac{\beta^{2k\delta}|x|^{2k-2}}{\left(|x^k-\rho^k|^2+\beta^{2k}\right)^{1+\delta}},$$
then
\begin{equation}\label{normbubble}
\left\|e^{U_1(x)}\right\|_{*}+\left\||x|^{2k-2}e^{U_2(x)}\right\|_{*}\le C.
\end{equation}
Another crucial property is that for any $h\in L^\infty(\Omega)$ one has
\begin{equation}\label{normint}
\int_\Omega|h|\le\int_\Omega\|h\|_*\left(\frac{\alpha^{2\delta}}{\left(|x|^2+\alpha^2\right)^{1+\delta}}+\frac{\beta^{2k\delta}|x|^{2k-2}}{\left(|x^k-\rho^k|^2+\beta^{2k}\right)^{1+\delta}}\right)\mathrm dx\le C\|h\|_*
\end{equation}

\begin{proposition}\label{r} 
Let $\Upsilon$ be the function in \eqref{upsp}, and assume that the parameter
$\eta$ satisfies \eqref{eta} and that $\alpha, \beta, \rho,\tau$ are as in \eqref{param}. Let $\mathscr E
=\Delta\Upsilon+|\Upsilon|^{p-1}\Upsilon, $ be the error term in the reduction procedure (see Subsection \ref{SubsectionReduction}). Then there exist $C,\delta>0$ such that 
$$\|\mathscr E\|_*\le\frac C{p^4}.$$
\end{proposition}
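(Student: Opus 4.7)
The plan is to partition $\Omega$ into three overlapping pieces matched to the three pointwise error bounds established in Lemmas \ref{in1}, \ref{in2}, and \ref{out}, and to show that in each piece the ratio $|\mathscr{E}(x)|/W(x)$ is bounded by $C/p^4$, where
$$W(x) := \frac{\alpha^{2\delta}}{(|x|^2+\alpha^2)^{1+\delta}}+\frac{\beta^{2k\delta}|x|^{2k-2}}{(|x^k-\rho^k|^2+\beta^{2k})^{1+\delta}}$$
is the weight appearing in \eqref{norm}. Fixing a common $\theta\in(0,1)$ small enough to be admissible in all three lemmas, I set
$$A_1 := \{x\in\Omega\colon |x|\le\alpha^{1-\theta}\},\quad A_2 := \{x\in\Omega\colon |x^k-\rho^k|\le\beta^{k(1-\theta)}\},\quad A_3 := \Omega\setminus(A_1\cup A_2).$$
Since $\alpha^{1-\theta}\ge\alpha$ and $\beta^{k(1-\theta)}\ge\beta^k$ for $p$ large, the hypothesis $|x|>\alpha$ and $|x^k-\rho^k|>\beta^k$ of Lemma \ref{out} is satisfied throughout $A_3$; the $k$-symmetry of $\Omega$ reduces $A_2$ to a single disc-like region around one negative peak.

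On $A_1$, setting $x=\alpha y$ with $|y|\le\alpha^{-\theta}$, Lemma \ref{in1} yields $|\mathscr{E}(\alpha y)|\le C\alpha^{-2}p^{-4}\log^6(|y|+2)/(|y|^2+1)^2$, while the first summand of $W(\alpha y)$ equals exactly $\alpha^{-2}(|y|^2+1)^{-(1+\delta)}$; the $\alpha^{-2}$ factors cancel and
$$\frac{|\mathscr{E}(\alpha y)|}{W(\alpha y)}\le\frac{C}{p^4}\cdot\frac{\log^6(|y|+2)}{(|y|^2+1)^{1-\delta}},$$
which is $O(1/p^4)$ uniformly in $y$ since $1-\delta>0$. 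The analysis on $A_2$ is parallel under the change of variable $y=(x^k-\rho^k)/\beta^k$: the factor $\rho^{2k-2}/\beta^{2k}$ arising in Lemma \ref{in2} is absorbed by the lower bound on the second summand of $W$, once one uses $|x|\approx\rho$ near the negative peak, which in turn follows from $\beta\ll\rho$ by Lemma \ref{alphabetarho}.

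On $A_3$, Lemma \ref{out} produces two summands of the same shape as $W$, and on $A_3$ one also has $|x|^2+\alpha^2\approx|x|^2$ and $|x^k-\rho^k|^2+\beta^{2k}\approx|x^k-\rho^k|^2$. Using the elementary inequality $(a+b)/(c+d)\le\max(a/c,b/d)$ for positive numbers, the ratio $|\mathscr{E}(x)|/W(x)$ is controlled by $p^{-1}$ times the maximum of
$$\frac{\alpha^{2\delta}}{|x|^{2\delta}}\le\alpha^{2\delta\theta}\qquad\text{and}\qquad\frac{\beta^{2k\delta}}{|x^k-\rho^k|^{2\delta}}\le\beta^{2k\delta\theta},$$
by the very definition of $A_3$. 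Since $\alpha,\beta$ decay exponentially in $p$ by Lemma \ref{asymptotic}, these two quantities are $o(p^{-N})$ for every $N$, so the $A_3$ contribution is much smaller than $C/p^4$.

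The main obstacle is really the careful coordination of constants: $\theta$ must be fixed small enough to be simultaneously admissible in Lemmas \ref{in1}, \ref{in2}, and \ref{out}, yet strictly positive in order to extract the exponentially small factor $\alpha^{2\delta\theta},\beta^{2k\delta\theta}$ on $A_3$; and $\delta\in(0,1)$ must coincide with the value fixed in the definition of $\|\cdot\|_*$ and must be small enough for $\log^6(|y|+2)/(|y|^2+1)^{1-\delta}$ to remain bounded on $\mathbb{R}^2$. Once this bookkeeping is carried out, the dominant contribution to $\|\mathscr{E}\|_*$ comes from the near-peak regions $A_1\cup A_2$ and is precisely of order $1/p^4$, as required.
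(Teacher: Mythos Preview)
Your proof is correct and follows essentially the same approach as the paper: the same three-region decomposition governed by $|x|\le\alpha^{1-\theta}$, $|x^k-\rho^k|\le\beta^{k(1-\theta)}$, and their complement, with Lemmas \ref{in1}, \ref{in2}, \ref{out} applied respectively, the logarithmic factor absorbed into $(|y|^2+1)^{1-\delta}$, and the far-region contribution made exponentially small via $\alpha^{2\delta\theta}$, $\beta^{2k\delta\theta}$. The only cosmetic difference is your packaging of the $A_3$ step through the inequality $(a+b)/(c+d)\le\max(a/c,b/d)$, whereas the paper bounds each summand of the error by the corresponding summand of the weight directly; the content is identical.
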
\
\begin{proof} 
We fix $\theta\in(0,\theta_0]$, with $\theta_0$ is as in Lemma \ref{alphabetarho}, small enough so that both Lemmas \ref{in1} and \ref{in2} hold true; we also fix $\delta$ as in Lemma \ref{out}.\\
If $\left|\frac x\alpha\right|\le\frac1{\alpha^\theta}$, then, by Lemma \ref{in1},
\begin{equation}\label{rin1}
|\mathscr E(x)|\le\frac C{\alpha^2p^4}\frac{\log ^6\left(\frac{|x|}\alpha+2\right)}{\left(\frac{|x|^2}{\alpha^2}+1\right)^2}\le\frac C{\alpha^2p^4}\frac{\left(\frac{|x|^2}{\alpha^2}+1\right)^{1-\delta}}{\left(\frac{|x|^2}{\alpha^2}+1\right)^2}=\frac C{p^4}\frac{\alpha^{2\delta}}{\left(|x|^2+\alpha^2\right)^{1+\delta}}.
\end{equation}
Similarly, if $\left|\frac{x^k-\rho^k}{\beta^k}\right|\le\frac1{\beta^{k\theta}}$, then Lemma \ref{in2} yields
\begin{equation}\label{rin2}
|\mathscr E(x)|\le C\frac{\rho^{2k-2}}{\beta^{2k}p^4}\frac{\log ^6\left(\frac{\left|x^k-\rho^k\right|}{\beta^k}+2\right)}{\left(\frac{\left|x^k-\rho^k\right|^2}{\beta^{2k}}+1\right)^2}\le\frac C{p^4}\frac{\beta^{2k\delta}|x|^{2k-2}}{\left(\left|x^k-\rho^k\right|^2+\beta^{2k}\right)^{1+\delta}},
\end{equation}
since, by \eqref{betarho}, $\beta^{1-\theta}=o(\rho)$, hence $|x|\ge\rho+o(\rho)$.
Finally, if $\left|\frac x\alpha\right|>\frac1{\alpha^\theta}$ and $\left|\frac{x^k-\rho^k}{\beta^k}\right|>\frac1{\beta^{k\theta}}$, then $|x|>\alpha$ and $\left|x^k-\rho^k\right|>\beta^k$, since $\alpha,\beta\to0$, therefore Lemma \ref{out} implies:
\begin{eqnarray}
\nonumber|\mathscr E(x)|&\le&\frac Cp\left(\frac{\alpha^{4\delta}}{|x|^{2+4\delta}}+\frac{\beta^{4k\delta}|x|^{2k-2}}{\left|x^k-\rho^k\right|^{2+4\delta}}\right)\\
\nonumber&\le&\frac Cp\left(\frac{\alpha^{2(1+\theta)\delta}}{|x|^{2+2\delta}}+\frac{\beta^{2k(1+\theta)\delta}|x|^{2k-2}}{\left|x^k-\rho^k\right|^{2+2\delta}}\right)\\
\label{rout}&\le&Ce^{-\frac{\theta\delta}2p}\left(\frac{\alpha^{2\delta}}{\left(|x|^2+\alpha^2\right)^{1+\delta}}+\frac{\beta^{2k\delta}|x|^{2k-2}}{\left(|x^k-\rho^k|^2+\beta^{2k}\right)^{1+\delta}}\right),
\end{eqnarray}
since, by Lemma \ref{asymptotic}, $\alpha,\beta^k\le e^{-\frac p4}$. From \eqref{rin1}, \eqref{rin2}, \eqref{rout} and the definition \eqref{norm} of $\|\cdot\|_{*}$ we get the desired estimate for $\mathscr E$.
\end{proof}

\section{The linear theory}\label{4}
 This section is devoted to prove that the linear operator $\mathscr L$ introduced in \eqref{elle} is invertible in a subspace of $H^1_{0,k}(\Omega)$ of codimension $1$.\\
 
 The proof is quite complex, hence the section is structured in multiple subsections.\\

In order to state the invertibility result, let us
first remind some well known facts about the solutions of the linearized equations in the whole plane (see for example \cite{bp,grossi-pistoia-arma}) and fix some notations.\\

Let $U_{1}$ be the profile introduced in Subsection \ref{subsection:ansatzFirst}. It is known that all the solutions of the linear equation
$$
-\Delta Z (x)=e^{U_1(x)}Z (x),\qquad x\in\mathbb R^2
$$
are linear combination of
\begin{eqnarray}
Z_{0,1}(x)&:=&Z_0\left(\frac x\alpha\right)=\frac{|x|^2-\alpha^2}{|x|^2+\alpha^2}, \label{Z_01}\\
Z_{1,1}(x)&:=&Z_1\left(\frac x\alpha\right)=\frac{4\alpha x_1}{|x|^2+\alpha^2}, \\
Z_{2,1}(x)&:=&Z_2\left(\frac x\alpha\right)=\frac{4\alpha x_2}{|x|^2+\alpha^2},
\end{eqnarray}
where
\begin{equation}\label{def:Z_i}Z_0(x):=\frac{|x|^2-1}{|x|^2+1},\quad Z_1(x):=\frac{4x_1}{|x|^2+1},\quad Z_2(x):=\frac{4x_2}{|x|^2+1}.\end{equation}
We observe that
\begin{equation}
\label{ortz}
\int _{\mathbb R^2}e^U Z_iZ_j=0\ \hbox{ if }\ i,j=0,1,2,\ i\not=j,
\end{equation}
where $U$ is the bubble defined in \eqref{bubble}.\\
Furthermore, let $U_{2}$ be the profile introduced in Subsection \ref{subsection:ansatzFirst}. It is also known that all the solutions of the linear equation
$$
-\Delta Z (x)=|x|^{2k-2}e^{U_2(x)}Z (x),\qquad x=(x_1,x_2)\in\mathbb R^2
$$
are linear combination of
\begin{eqnarray}
Z_{0,2}(x)&:=&Z_0\left(\frac{x^k-\rho^k}{\beta^k}\right)=\frac{\left|x^k-\rho^k\right|^2-\beta^{2k}}{\left|x^k-\rho^k\right|^2+\beta^{2k}},\label{Z_02}\\
 Z_{1,2}(x)&:=&Z_1\left(\frac{x^k-\rho^k}{\beta^k}\right)=\frac{4\beta^k\left(\Re\left(x^k\right)-\rho^k\right)}{\left|x^k-\rho^k\right|^2+\beta^{2k}},\\
Z_{2,2}(x)&:=&Z_2\left(\frac{x^k-\rho^k}{\beta^k}\right)=\frac{4\beta^k\Im\left(x^k\right)}{\left|x^k-\rho^k\right|^2+\beta^{2k}}.
\end{eqnarray}
 We set \begin{equation}\label{def:Ztilde}\widetilde Z(x):= Z_{1,2}(x),\end{equation} and we introduce the orthogonality condition which has to be satisfied by the remainder term $\phi$ in the ansatz \eqref{uphi}:
\begin{equation}\label{orto}
\int_{\Omega}\nabla P\widetilde Z\cdot\nabla\phi = \int_\Omega|x|^{2k-2}e^{U_2(x)}\widetilde Z(x)\phi(x)\mathrm dx=0
\end{equation}

We also set the notation $L^p_k(\Omega)$, for $p\in[1,\infty]$, for the space of the $k$-symmetric functions in $L^p(\Omega)$, namely:
\begin{equation}\label{linftyk}
L^p_k(\Omega):=\left\{u\in L^p(\Omega):\,u(x)=u\left(\overline x\right)=u\left(e^{\mathrm i\frac{2\pi}k}x\right)\,\mbox{for a.e. }x\in\Omega\right\}.
\end{equation}
and recall that the space $H^1_{0,k}(\Omega)$, of the $k$-symmetric functions in $H^1_0(\Omega)$, has been similarly defined in \eqref{def:H10sim}.
 Let us observe that $H^1_{0,k}(\Omega)$ splits as the direct sum of its subspaces
$$\mathbf Z:= \left\{c\mathrm P\widetilde Z:\,c\in\mathbb R\right\}\ \hbox{and}\
\mathbf Z^\perp:=\left\{\phi\in H^1_{0,k}(\Omega):\,\int_\Omega|x|^{2k-2}e^{U_2(x)}\widetilde Z(x)\phi(x)\mathrm dx=0\right\},
$$
Now, we are ready to state the main result of this section, concerning the invertibility of $\mathscr L$ in the space $\mathbf Z^\perp$.

\begin{proposition}\label{linear} 
Let $\mathscr L$ be the linear operator introduced in \eqref{elle} and let us assume that the parameters $\alpha, \beta, \rho,\tau, \eta$ in the definition of $\Upsilon$ are choosen as in \eqref{param}, and that $\eta$ satisfies \eqref{eta}.
Then, there exists $C>0$ such that, for any $ h\in L^\infty_k(\Omega)$, the linear problem 
\begin{equation}\label{eqphih}
\left\{\begin{array}{ll}\mathscr L\phi(x)=h(x)+\mathfrak c|x|^{2k-2}e^{U_2(x)}\widetilde Z(x)&x\in\Omega\\\phi(x)=0&x\in\partial\Omega.\end{array}\right.
\end{equation}
 has a unique solution $(\phi, \mathfrak c)\in H^1_{0,k}(\Omega)\cap L^\infty_k(\Omega)\times \mathbb R$ such that $\phi$ satisfies \eqref{orto} and 
\begin{equation}\label{expressionC}\mathfrak c=-\frac{\int_{\Omega}\nabla h \cdot\nabla P\widetilde Z}{\int_{\Omega} \left|\nabla P\widetilde Z\right|^2}.\end{equation}
Furthermore
\begin{equation}\label{estimphi}\|\phi\|_\infty+|\mathfrak c|\le Cp\|h\|_*,
\end{equation}
where the weighted norm $\|\cdot\|_*$ is defined in \eqref{norm}. 
\end{proposition}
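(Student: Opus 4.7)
The approach is the standard Ljapunov--Schmidt one (see e.g.\ \cite{EspositoMussoPistoiaJDE2006,pi-ric}): establish \eqref{estimphi} by contradiction through a blow-up analysis at each concentration scale, then deduce existence and uniqueness by the Fredholm alternative. The formula \eqref{expressionC} is obtained by testing \eqref{eqphih} against $P\widetilde Z$ in the $H^1_0$-inner product, using $-\Delta P\widetilde Z=|x|^{2k-2}e^{U_2}\widetilde Z$ together with \eqref{orto}; combining this identity with \eqref{normint} and the standard pointwise bounds on $P\widetilde Z$, the estimate $|\mathfrak c|\le Cp\|h\|_*$ follows once $\|\phi\|_\infty\le Cp\|h\|_*$ has been proved.

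For the a priori bound, I argue by contradiction: suppose sequences $p_n\to\infty$, $h_n\in L^\infty_k(\Omega)$, $\phi_n\in\mathbf Z^\perp\cap L^\infty_k(\Omega)$, $\mathfrak c_n\in\mathbb R$ satisfy \eqref{eqphih} with $\|\phi_n\|_\infty=1$ and $p_n\|h_n\|_*=o(1)$. Blow up at the positive peak by setting $\widetilde\phi_n(y):=\phi_n(\alpha_n y)$: Lemmas \ref{pupvpw} and \ref{asymptotic} together with the Taylor expansion \eqref{estimuvw} give that $\alpha_n^2 p_n|\Upsilon_n|^{p_n-1}(\alpha_n\cdot)\to e^{U}$ locally uniformly in $\mathbb R^2$, while the rescaled right-hand side (the $\mathfrak c_n$-contribution being supported away from the origin by \eqref{alpharho}) is $o(1)$ locally uniformly. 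Elliptic regularity yields $\widetilde\phi_n\to\widetilde\phi_\infty$ in $C^0_{\mathrm{loc}}(\mathbb R^2)$ with $|\widetilde\phi_\infty|\le 1$ solving $-\Delta\widetilde\phi_\infty=e^U\widetilde\phi_\infty$; the classification recalled at the start of Section~\ref{4} places $\widetilde\phi_\infty\in\mathrm{span}(Z_0,Z_1,Z_2)$, but the $k$-symmetry (preserved by the scaling $y\mapsto\alpha_n y$) rules out the translation modes $Z_1,Z_2$ for $k\ge 3$, leaving $\widetilde\phi_\infty=c_0 Z_0$. The analogous argument at any of the negative peaks, via the change of variable $x=\sqrt[k]{\beta_n^k y+\rho_n^k}$, produces a limit $\widehat\phi_\infty=d_0 Z_0+d_1 Z_1$ (the $Z_2$-mode is killed by reflection invariance), and \eqref{orto} passed to the limit, combined with \eqref{ortz}, forces $d_1=0$.

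The heart of the argument is to show $c_0=d_0=0$. For this I test \eqref{eqphih} against the projected kernel functions $PZ_{0,1}$ and $PZ_{0,2}$: integrating by parts and using $-\Delta PZ_{0,i}=-\Delta Z_{0,i}=w_i Z_{0,i}$ with $w_1=e^{U_1}$ and $w_2=|x|^{2k-2}e^{U_2}$, the resulting identity reads
\[
\int_\Omega\bigl(p_n|\Upsilon_n|^{p_n-1}-w_i\bigr)\phi_n\,PZ_{0,i}\,\mathrm dx=\int_\Omega h_n\,PZ_{0,i}\,\mathrm dx+\mathfrak c_n\!\int_\Omega|x|^{2k-2}e^{U_2}\widetilde Z\,PZ_{0,i}\,\mathrm dx.
\]
The right-hand side is $o(1)$ by \eqref{normint}, \eqref{normbubble}, and \eqref{ortz} (applied to the cross term for $i=2$ after rescaling by $\beta_n$). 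The left-hand side, developed through the sharp Taylor expansion of Lemma~\ref{taylor} and the asymptotic profiles of Lemmas \ref{pupvpw}--\ref{asymptotic} at the two concentration scales, produces a leading contribution proportional to $c_0$ for $i=1$ and to $d_0$ for $i=2$, with non-zero proportionality constants guaranteed precisely by the quantization relations \eqref{param}, which saturate the leading-order balance of the ansatz and thereby furnish a non-degenerate pairing with the scaling direction. This forces $c_0=d_0=0$.

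Once all limit profiles vanish, a Green's representation for $\phi_n$ combined with the weighted size estimates of Section~\ref{3} shows $\phi_n\to 0$ uniformly on $\Omega$, contradicting $\|\phi_n\|_\infty=1$. With the a priori bound in hand, existence and uniqueness of $(\phi,\mathfrak c)$ follow by rewriting \eqref{eqphih} as a compact perturbation of the identity on $(\mathbf Z^\perp\cap L^\infty_k(\Omega))\times\mathbb R$ via $(-\Delta)^{-1}$, and invoking the Fredholm alternative; injectivity is furnished by the a priori estimate itself. The truly delicate point is the vanishing of the scaling modes $c_0,d_0$: since the orthogonality \eqref{orto} is imposed only against $\widetilde Z=Z_{1,2}$, the spurious kernel directions $Z_{0,1},Z_{0,2}$ can only be killed by exploiting the sharpness of \eqref{param} in a quantitative manner, and this is what requires $\eta$ to lie in the non-degenerate range \eqref{eta}.
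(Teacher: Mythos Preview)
Your overall architecture matches the paper's: contradiction plus blow-up at both scales, symmetry/orthogonality to kill translation modes, then a test-function argument to kill the scaling modes, and Fredholm for existence. However, the key step---eliminating $c_0,d_0$---has a genuine gap.

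Testing against $PZ_{0,i}$ as you propose does \emph{not} produce a non-degenerate pairing. After integrating by parts, the identity reads
\[
\int_\Omega e^{U_1}Z_{0,1}\phi_n-\int_\Omega p_n|\Upsilon_n|^{p_n-1}\phi_n\,PZ_{0,1}=-\int_\Omega h_n\,PZ_{0,1}+O(\mathfrak c_n).
\]
Since $PZ_{0,1}=Z_{0,1}-1+o(1)$ and $p_n|\Upsilon_n|^{p_n-1}=e^{U_1}(1+O(1/p))$ near the origin, the left-hand side reduces to $\int_\Omega e^{U_1}\phi_n+O(1/p)$. But $\int_{\mathbb R^2}e^UZ_0=0$, so $\int_\Omega e^{U_1}\phi_n\to c_0\int_{\mathbb R^2}e^UZ_0=0$ at a rate that is \emph{not} quantified by your hypotheses; you cannot conclude it is $o(1/p)$ to match the right-hand side, and the identity yields no information on $c_0$. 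Your appeal to ``the quantization relations \eqref{param}'' does not rescue this: those relations fix the leading order of $\Upsilon$, but do not provide the extra $O(p)$ amplification needed here.

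The paper fixes this by testing against carefully designed functions $PQ_1,PQ_2$: with $S,T$ radial solutions of $\Delta S+e^US=e^UZ_0$ and $\Delta T+e^UT=e^U$, it sets $Q_1(x)=S(x/\alpha)+\tfrac{4}{3}(\log\alpha)\,Z_{0,1}(x)+\tfrac{8\pi}{3}H(0,0)\,T(x/\alpha)$. The factor $\log\alpha\sim -p/4$ is crucial: it upgrades the $O(1/p)$ discrepancy between $p|\Upsilon|^{p-1}$ and $e^{U_1}$ to an $O(1)$ contribution, and the resulting coefficient of $c_0$ is the explicit nonzero number $8\pi\bigl(\tfrac{1}{3}+a(\eta)\bigr)$, where $a(\eta)=\tfrac13-\tfrac{2}{3}\tfrac{k\eta^2}{(k\eta-1)(\eta+1)}\log\eta$ comes precisely from the asymptotics of $\log\alpha$ in Lemma~\ref{asymptotic}. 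An analogous $PQ_2$ handles $d_0$. The paper also needs a subsequence argument to control the cross-term from the other bubble.

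A minor difference: for the step ``$\phi_n\to0$ uniformly once the limit profiles vanish'' you invoke a Green's representation; the paper instead builds an explicit barrier (Lemma~\ref{maxprinc}) and uses the maximum principle to show $\|\phi_n\|_\infty\le C(\|\phi_n\|_{L^\infty(\Omega\setminus\widetilde\Omega)}+\|h_n\|_*)$ (Lemma~\ref{intnorm}). Either route is standard.
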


The proof of Proposition \ref{linear} is postponed at the end of the section. We will need some lemmas, concerning the properties of the linearized operator $\mathscr L$. We stress that, since by assumption the parameters $\eta$ and $\alpha, \beta, \rho,\tau$ in satisfy \eqref{eta} and \eqref{param}, we can exploit all the estimates collected in Section \ref{3}.

\subsection{The estimate of the non-autonomous term in $\mathscr L$}
\begin{lemma}\label{derivestim} 
There exist $\theta,C>0$ such that if $|y|\le\frac1{\alpha^\theta}$ then
\begin{eqnarray}
\label{deriv1}p|\Upsilon(\alpha y)|^{p-1}&=&\frac{e^{U(y)}}{\alpha^2}\left(1+\frac1p\left(V(y)-U(y)-\frac{U(y)^2}2\right)\right.\\
\nonumber&&\left.+O\left(\frac{\log ^4(|y|+2)}{p^2}\right)\right),
\end{eqnarray}
and if $|y|\le\frac1{\beta^{k\theta}}$ then
\begin{eqnarray}
\label{deriv2} p\left|\Upsilon\left(\sqrt[k]{\beta^ky+\rho^k}\right)\right|^{p-1}&=&\frac{\rho^k}{\beta^{2k}}e^{U(y)}\left(1+\frac1p\left(V(y)-U(y)-\frac{U(y)^2}2\right)\right.\\
\nonumber&&\left.+O\left(\frac{\log ^4(|y|+2)}{p^2}\right)\right).
\end{eqnarray}
Moreover, there exists $C>0$ such that
\begin{eqnarray}
\label{deriv}p|\Upsilon(x)|^{p-1}&\le&C\left(e^{U_1(x)}+|x|^{2k-2}e^{U_2(x)}\right).
\end{eqnarray}
\end{lemma}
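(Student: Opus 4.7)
The strategy is to reuse the pointwise expansions of $\Upsilon$ already produced in the error analysis of Section \ref{3}, and to adapt Lemma \ref{taylor} from exponent $p$ to exponent $p-1$. For \eqref{deriv1} I would start from the expansion \eqref{estimuvw} established inside the proof of Lemma \ref{in1}, namely
$\Upsilon(\alpha y)=p\tau\bigl(1+U(y)/p+V(y)/p^2+W(y)/p^3+O(e^{-\varepsilon p})\bigr)$ for $|y|\le\alpha^{-\theta}$. Writing $\Psi:=1+U/p+V/p^2+W/p^3+O(e^{-\varepsilon p})$ and $\Psi^{p-1}=\Psi^p\cdot\Psi^{-1}$, I would expand the first factor via Lemma \ref{taylor} with $(a,b,c)=(U(y),V(y),W(y))$ and the second via the geometric series $\Psi^{-1}=1-U(y)/p+O(\log^2(|y|+2)/p^2)$. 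Their product yields
$\Psi^{p-1}=e^{U(y)}\bigl(1+(V(y)-U(y)-U(y)^2/2)/p+O(\log^4(|y|+2)/p^2)\bigr)$, the extra $-U(y)/p$ contribution being exactly the one coming from $\Psi^{-1}$. The prefactor is handled by the first equation of \eqref{param}: $(p\tau)^{p-1}=(p\tau)^p/(p\tau)=1/(p\alpha^2)$, so that multiplying by $p$ gives the $1/\alpha^2$ prefactor in \eqref{deriv1}. The estimate \eqref{deriv2} is then obtained by the same procedure, starting from the analogous expansion of $\Upsilon(\sqrt[k]{\beta^ky+\rho^k})$ derived inside the proof of Lemma \ref{in2} and using the second equation of \eqref{param} to compute $p(p\tau\eta)^{p-1}=k^2\rho^{2k-2}/\beta^{2k}$.

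For the global bound \eqref{deriv} I would split $\Omega$ into the same three regions used in the proof of Proposition \ref{r}. In $|x/\alpha|\le\alpha^{-\theta}$, estimate \eqref{deriv1} gives directly $p|\Upsilon(x)|^{p-1}\le C e^{U(y)}/\alpha^2=Ce^{U_1(x)}$. In $|(x^k-\rho^k)/\beta^k|\le\beta^{-k\theta}$, estimate \eqref{deriv2} provides an analogous bound by $C|x|^{2k-2}e^{U_2(x)}$, since $|x|\sim\rho$ there (via \eqref{betatheta}). In the complementary outer region, I would mimic the two-sided logarithmic upper and lower bounds on $\Upsilon(x)$ obtained in the proof of Lemma \ref{out}, combined with the elementary inequality $(1+s/p)^{p-1}\le e^s$: this gives, up to lower-order corrections, $|\Upsilon(x)|^{p-1}\le C(p\tau)^{p-1}(\alpha/|x|)^{4-\sigma}(|x|/\rho+1)^{4k\eta}+C(p\tau\eta)^{p-1}(\beta^k/|x^k-\rho^k|)^{4-\sigma}(|x^k-\rho^k|/\rho^k+1)^{4/(k\eta)}$. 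Using $p(p\tau)^{p-1}=1/\alpha^2$, $p(p\tau\eta)^{p-1}=k^2\rho^{2k-2}/\beta^{2k}$, together with the exponential bounds in Lemma \ref{alphabetarho} (especially \eqref{alpharhok}) to absorb the powers of $|x|/\rho$ and $|x^k-\rho^k|/\rho^k$ exactly as in the proof of Lemma \ref{out}, the right-hand side is then dominated by a constant multiple of $\alpha^2/|x|^4+\beta^{2k}|x|^{2k-2}/|x^k-\rho^k|^4\le C\bigl(e^{U_1(x)}+|x|^{2k-2}e^{U_2(x)}\bigr)$, which is \eqref{deriv}.

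The main technical obstacle I expect is the bookkeeping required to upgrade Lemma \ref{taylor} to the exponent $p-1$ with the claimed $O(\log^4(|y|+2)/p^2)$ remainder: one has to check that, upon multiplying $\Psi^p$ by $\Psi^{-1}$, the product of the $O(\log^6/p^3)$ remainder of Lemma \ref{taylor} with the $O(\log^2/p^2)$ remainder of the geometric series does not spoil the order of the error term, and that the $O(e^{-\varepsilon p})$ correction in \eqref{estimuvw} remains absorbable after exponentiation to the power $p-1$. A secondary but less delicate difficulty is the outer part of \eqref{deriv}, where one loses a factor $p\tau$ relative to the computation in Lemma \ref{out}, so that the exponential smallness given by Lemma \ref{alphabetarho} has to be re-invoked to recover the desired pointwise control in terms of $e^{U_1(x)}+|x|^{2k-2}e^{U_2(x)}$ rather than merely in the weighted norm $\|\cdot\|_*$.
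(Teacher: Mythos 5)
Your treatment of \eqref{deriv1} and \eqref{deriv2} is exactly the route the paper takes: expand via \eqref{estimuvw}, apply Lemma \ref{taylor} to the factor raised to the power $p$, multiply by the first-order geometric-series expansion of $\Psi^{-1}$ to account for the shift to $p-1$, and produce the prefactor from the first two equations of \eqref{param}. The bookkeeping you flag is uneventful: the $O(e^{-\varepsilon p})$ term in \eqref{estimuvw} and the cross-terms in $\Psi^p\cdot\Psi^{-1}$ are all of strictly lower order than $\log^4(|y|+2)/p^2$. (Your prefactor $k^2\rho^{2k-2}/\beta^{2k}$ in \eqref{deriv2} is in fact what the second relation in \eqref{param} yields; the $\rho^k/\beta^{2k}$ in the statement appears to be a slip.)

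Your argument for the global bound \eqref{deriv}, however, does not close in the outer region, and this is precisely where the paper does something genuinely different. Reproducing the computation of Lemma \ref{out} with $(1+s/p)^{p-1}\le e^{s}$ does give an inequality of the type you write, but the mechanism that absorbs the factor $(|x|/\rho+1)^{4k\eta}$ trades it for $|x|^{4k\eta-2+\sigma+4\delta}$ and hence only produces the weaker pointwise bound $\frac{\alpha^{4\delta}}{|x|^{2+4\delta}}+\frac{\beta^{4k\delta}|x|^{2k-2}}{|x^k-\rho^k|^{2+4\delta}}$ with $\delta<\tfrac12$; taking $\delta=\tfrac12$ to recover $\alpha^2/|x|^4$ is impossible because the companion constraint $\frac4{k\eta}-2+\sigma+4\delta<0$ then fails. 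Concretely, for $|x|$ of order one your claimed inequality would read $\alpha^{2-\sigma}\rho^{-4k\eta}\lesssim\alpha^2$, i.e.\ $\alpha^{-\sigma}\rho^{-4k\eta}\lesssim 1$, which is false since both factors are exponentially large by Lemma \ref{asymptotic}. So the Lemma \ref{out}--style estimate controls $\|\cdot\|_*$, not the much stronger pointwise quantity $e^{U_1}+|x|^{2k-2}e^{U_2}$ that \eqref{deriv} demands.

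What the paper actually does for \eqref{deriv} is split $\Omega$ into the three concentric regions $\{|x|\le\rho/2\}$, $\{\rho/2<|x|\le 2\rho\}$, $\{|x|>2\rho\}$ (not the three regions of Proposition \ref{r}). In the first two, only one bubble varies and the other contributes a constant, so $\Upsilon(x)$ collapses to $p\tau+\tau(1+O_{\le 0}(1/p))\log\frac{\alpha^4}{(|x|^2+\alpha^2)^2}+O(1/p)$ (and its mirror), from which $p|\Upsilon|^{p-1}=O(e^{U_1})$ follows by exponentiation, using that $O_{\le 0}(1/p)\cdot\log\frac{\alpha^4}{(|x|^2+\alpha^2)^2}$ stays bounded. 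In the far region $|x|>2\rho$ the two logarithms nearly cancel, leaving $\Upsilon(x)\approx 4\tau(k\eta-1)\log|x|$, and the paper then controls $p|\Upsilon|^{p-1}$ by the monotonicity of $t\mapsto t^{p-1}\log^{p-1}\tfrac1t$ combined with the inequality $t\,e^p\bigl(\tfrac1p\log\tfrac1t\bigr)^p\le 1$, landing on $O(\beta^{2k}/|x|^{2k+2})=O(|x|^{2k-2}e^{U_2(x)})$. This exploitation of the cancellation between the positive and negative logarithmic profiles is the ingredient missing from your outer-region argument.
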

\begin{proof}[Proof of Lemma \ref{derivestim}] 
If $|y|\le\frac1{\alpha^\theta}$, then from \eqref{estimuvw} and \eqref{taylorp} we get
\begin{eqnarray*}
p|\Upsilon(\alpha y)|^{p-1}&=&p\left(\tau\left(p+U(y)+\frac{V(y)}p+O\left(\frac{\log (|y|+2)}{p^2}\right)\right)\right)^{p-1}\\
&=&\frac1{\alpha^2}\left(1+\frac{U(y)}p+\frac{V(y)}{p^2}+O\left(\frac{\log (|y|+2)}{p^3}\right)\right)^{p-1}\\
&=&\frac{e^{U(y)}}{\alpha^2}\left(1+\frac1p\left(V(y)-\frac{U(y)^2}2\right)+O\left(\frac{\log ^4(|y|+2)}{p^2}\right)\right)\\
&&\cdot\left(1-\frac{U(y)}p+O\left(\frac{\log (|y|+2)}{p^2}\right)\right)\\
&=&\frac{e^{U(y)}}{\alpha^2}\left(1+\frac1p\left(V(y)-U(y)-\frac{U(y)^2}2\right)+O\left(\frac{\log ^4(|y|+2)}{p^2}\right)\right);
\end{eqnarray*}
Estimate \eqref{deriv2} can be shown similarly.\\
In order to prove \eqref{deriv}, we observe that, due to \eqref{pu1}, \eqref{pu2} and Lemma \ref{asymptotic}, one has
$$\Upsilon(x)=\tau\left(1+O_{\le0}\left(\frac1p\right)\right)\log \frac{\left(\left|x^k-\rho^k\right|^2+\beta^{2k}\right)^{2\eta}}{\left(|x|^2+\alpha^2\right)^2}+O(1).$$
If $|x|\le\frac\rho2$, then $\frac{\rho^{2k}}C\le\left|x^k-\rho^k\right|^2+\beta^{2k}\le C\rho^{2k}$ for some $C>0$; therefore, due to \eqref{param} one gets
\begin{eqnarray*}
\Upsilon(x)&=&\tau\left(1+O_{\le0}\left(\frac1p\right)\right)\log \frac{\rho^{4k\eta}}{\left(|x|^2+\alpha^2\right)^2}+O\left(\frac1p\right)\\
&=&p\tau+\tau\left(1+O_{\le0}\left(\frac1p\right)\right)\log \frac{\alpha^4}{\left(|x|^2+\alpha^2\right)^2}+O\left(\frac1p\right),
\end{eqnarray*}
hence
\begin{eqnarray*}
p|\Upsilon(x)|^{p-1}&=&O\left(p^p\tau^{p-1}\left(1+\left(\frac1p+O_{\le0}\left(\frac1{p^2}\right)\right)\log \frac{\alpha^4}{\left(|x|^2+\alpha^2\right)^2}\right)^{p-1}\right)\\
&=&O\left(\frac1{\alpha^2}e^{\left(1+O_{\le0}\left(\frac1p\right)\right)\log \frac{\alpha^4}{\left(|x|^2+\alpha^2\right)^2}\left(1-\frac1p\right)}\right)\\
&=&O\left(e^{U_1(x)}\right).
\end{eqnarray*}
On the other hand, if $\frac\rho2<|x|\le2\rho$, then $\frac{\rho^2}C\le|x|^2+\alpha^2\le C\rho^2$, therefore by interchanging the two bubbles one shows that $p|\Upsilon(x)|^{p-1}=O\left(|x|^{2k-2}e^{U_2(x)}\right)$.\\
Finally, if $|x|>2\rho$, then
$$\frac{|x|^2}C\le|x|^2+\alpha^2\le C|x|^2,\qquad\qquad\frac{|x|^{2k}}C\le\left|x^k-\rho^k\right|^2+\beta^{2k}\le C|x|^{2k},$$
therefore
$$\Upsilon(x)=\tau\left(1+O\left(\frac1p\right)\right)4(k\eta-1)\log |x|+O\left(\frac1p\right);$$
since we want to estimate $p|\Upsilon(x)|^{p-1}$ by means of $|x|^{2k-2}e^{U_2(x)}$, which decays power-like, we need some basic estimates for products of logarithms and powers.\\
The function $t\mapsto t^{p-1}\log ^{p-1}\frac1t$ is decreasing if $e^{-\frac{p+1}{2k+2}}\le t\le1$; since $\eta$ satisfies \eqref{etak}, then from Lemma \ref{asymptotic} we obtain $e^{-\frac{p+1}{2k+2}}\le C\rho$ for some $C>0$, therefore for $|x|>2\rho$ we get the estimate
$$\log ^{p-1}\frac2{C|x|}\le(2\rho)^{2k+2}\left(\log \frac1{C\rho}\right)^{p-1}\frac1{|x|^{2k+2}}.$$
Therefore, using the elementary inequality
$$te^p\left(\frac{\log \frac1t}p\right)^p\le1\qquad\forall\,x\in(0,1],$$
we get:
\begin{eqnarray*}
p|\Upsilon(x)|^{p-1}&=&p^{p-1}\tau^p\left(1+O\left(\frac1p\right)\right)^{p-1}\left(\frac{4(k\eta-1)\log \frac1{|x|}+O(1)}p\right)^{p-1}\\
&=&O\left(\frac{\rho^{2k-2}}{\beta^{2k}}\left(\frac{4\left(k-\frac1\eta\right)\log \frac1{|x|}+O(1)}p\right)^{p-1}\right)\\
&=&O\left(\frac{\rho^{4k}}{\beta^{2k}}\left(\frac{4\left(k-\frac1\eta\right)\log \frac1\rho+O(1)}p\right)^{p-1}\frac1{|x|^{2k+2}}\right)\\
&=&O\left(\rho^{4\left(k-\frac1\eta\right)}e^p\left(\frac{4\left(k-\frac1\eta\right)\log \frac1\rho+O(1)}p\right)^p\frac{\beta^{2k}}{|x|^{2k+2}}\right)\\
&=&O\left(\frac{\beta^{2k}}{|x|^{2k+2}}\right)\\
&=&O\left(|x|^{2k-2}e^{U_2(x)}\right).
\end{eqnarray*}
\end{proof}
\begin{remark}\label{normphi} 
Multiplying equation \eqref{eqphih} by $\phi$ and integrating by parts, we get
$$\|\phi\|^2_{H^1_0}=\int_\Omega p|\Upsilon|^{p-1}\phi^2-\int_\Omega h\phi,$$
therefore in view of \eqref{deriv} one has
\begin{eqnarray*}
\|\phi\|^2_{H^1_0}&\le&C\|\phi\|_\infty^2\int_\Omega\left(e^{U_1(x)}+|x|^{2k-2}e^{U_2(x)}\right)\mathrm dx\\
&+&C\|\phi\|_\infty\|h\|_*\int_\Omega\left(\frac{\alpha^{2\delta}}{\left(|x|^2+\alpha^2\right)^{1+\delta}}+\frac{\beta^{2k\delta}|x|^{2k-2}}{\left(|x^k-\rho^k|^2+\beta^{2k}\right)^{1+\delta}}\right)\mathrm dx\\
&\le&C\left(\|\phi\|^2_\infty+\|\phi\|_\infty\|h\|_*\right),
\end{eqnarray*} that is
$$\|\phi\|_{H^1_0}\le C\left(\|\phi\|_\infty+\|h\|_*\right).$$ 
\end{remark}

\subsection{ $\mathscr L$ satisfies the maximum principle far from the peaks}
\begin{lemma}\label{maxprinc} 
For some $R>0$, independent on $p$, the operator $\mathscr L$ satisfies the maximum principle on the domain
$$\widetilde\Omega:=\left\{x\in\Omega:\,\left|\frac x\alpha\right|>R,\left|\frac{x^k-\rho^k}{\beta^k}\right|>R\right\};$$
in other words,
$$\left\{\begin{array}{ll}\mathscr L\psi\le0&\mbox{in }\widetilde\Omega\\\psi\ge0&\mbox{on }\partial\widetilde\Omega\end{array}\right.\qquad\Rightarrow\qquad\psi\ge0\mbox{ in }\widetilde\Omega.$$
\end{lemma}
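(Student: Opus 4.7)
The natural tool here is the supersolution method: I plan to exhibit a function $Z>0$ on $\overline{\widetilde\Omega}$ satisfying $\mathscr L Z\le 0$ there. Given such $Z$, the maximum principle follows by the classical trick applied to $w:=\psi/Z$: expanding
\[
\mathscr L\psi \;=\; Z\,\Delta w + 2\nabla Z\cdot\nabla w + w\,\mathscr L Z,
\]
at any interior negative minimum of $w$ one has $\nabla w=0$, $\Delta w\ge 0$, and $w\,\mathscr L Z\ge 0$, which contradicts $\mathscr L\psi\le 0$. Strictness can be enforced by a mild perturbation of $Z$, or by first applying the argument on approximating domains.

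The natural candidate is $Z:=Z_{0,1}+Z_{0,2}$, where $Z_{0,1},Z_{0,2}$ are the radial kernel elements introduced in \eqref{Z_01}, \eqref{Z_02}. On $\widetilde\Omega$ each summand is bounded below by $(R^2-1)/(R^2+1)>0$ for $R>1$, so positivity is immediate. Using the identities $-\Delta Z_{0,1}=e^{U_1}Z_{0,1}$ and $-\Delta Z_{0,2}=|x|^{2k-2}e^{U_2}Z_{0,2}$,
\[
\mathscr L Z \;=\; \bigl(p|\Upsilon|^{p-1}-e^{U_1}\bigr)Z_{0,1} + \bigl(p|\Upsilon|^{p-1}-|x|^{2k-2}e^{U_2}\bigr)Z_{0,2}.
\]
I would then verify non-positivity by the same three-zone partition used in the proof of \eqref{deriv}. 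On $\{|x|\le\rho/2\}$ the cross weight $|x|^{2k-2}e^{U_2(x)}$ is exponentially small in $p$ by \eqref{betarho}, so only the first summand matters, and the refined expansion \eqref{deriv1} yields $p|\Upsilon|^{p-1}-e^{U_1}=O(e^{U_1}\log^4(|x/\alpha|+2)/p)$. On $\{\rho/2<|x|<2\rho\}$ the symmetric analysis via \eqref{deriv2} and \eqref{alpharho} handles the $k$ negative peaks. On $\{|x|>2\rho\}$ the sharper bound internal to the proof of \eqref{deriv} shows $p|\Upsilon|^{p-1}$ is dominated by a small multiple of $|x|^{2k-2}e^{U_2}$, while $e^{U_1}\sim \alpha^2/|x|^4$ is polynomially small.

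The main obstacle lies in the intermediate sub-zones, where the leading-order cancellation in $p|\Upsilon|^{p-1}-e^{U_i}$ leaves an $O(1/p)$ residual whose sign is not immediately controlled. To absorb it I expect to perturb the supersolution, e.g.\ $Z\mapsto Z+\mathcal K/p$ for a well-chosen $\mathcal K>0$: the added piece contributes $\mathcal K|\Upsilon|^{p-1}$ to $\mathscr L Z$, of the same order as the residual and, together with the lower bound $Z_{0,1}+Z_{0,2}\ge 2(R^2-1)/(R^2+1)$, it can be tuned to a definite negative sign uniformly in $p$ provided $R$ is chosen sufficiently large.

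A robust alternative, should the pointwise bookkeeping prove too delicate, is the variational characterization: the MP on $\widetilde\Omega$ is equivalent to $\lambda_1(-\Delta-p|\Upsilon|^{p-1};\widetilde\Omega)>0$. By \eqref{deriv}, this is implied by the weighted Poincar\'e inequality
\[
\int_{\widetilde\Omega}\bigl(e^{U_1}+|x|^{2k-2}e^{U_2}\bigr)\phi^2 \le c_R\int_{\widetilde\Omega}|\nabla\phi|^2
\]
with $c_R\to 0$ as $R\to\infty$. Rescaling $y=x/\alpha$ for the first weight, and locally using the conformal change $y=(x^k-\rho^k)/\beta^k$ near each negative peak for the second, reduces everything to the 2D Hardy-type bound $\int_{|y|>R}|\tilde\phi(y)|^2/(1+|y|^2)^2\,dy\le CR^{-2}\int|\nabla\tilde\phi|^2\,dy$ for $\tilde\phi\in H^1_0(\{|y|>R\})$, which is proved by a standard one-dimensional integration by parts in the radial direction. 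Taking $R$ large enough closes the argument uniformly in $p$.
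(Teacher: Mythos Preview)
Your first approach via $Z=Z_{0,1}+Z_{0,2}$ has a real gap, not just a bookkeeping issue. In the annulus $R\alpha<|x|\le\alpha^{1-\theta}$ (contained in $\{|x|\le\rho/2\}$) the cross weight $|x|^{2k-2}e^{U_2}$ is exponentially small by \eqref{betarho}, while $Z_{0,2}(x)=1+O(\beta^{2k}/\rho^{2k})$ and $p|\Upsilon|^{p-1}=e^{U_1}(1+O(1/p))$ by \eqref{deriv1}. Hence
\[
\mathscr L Z \;=\; O\!\left(\frac{e^{U_1}}{p}\right)Z_{0,1}\;+\;\bigl(e^{U_1}+o(1)\bigr)\cdot\bigl(1+o(1)\bigr)\;=\;e^{U_1}\bigl(1+O(1/p)\bigr)\;>\;0,
\]
so $Z$ is not a supersolution there. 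The problem is that $Z_{0,2}$ does \emph{not} decay away from the negative peaks; it tends to $1$. Your proposed fix $Z\mapsto Z+\mathcal K/p$ adds $\mathcal K|\Upsilon|^{p-1}\ge0$ to $\mathscr L Z$, which goes in the wrong direction. The symmetric failure occurs near the negative peaks, where $Z_{0,1}\approx1$ and the first bracket contributes $+|x|^{2k-2}e^{U_2}$ at leading order.

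Your second approach (first eigenvalue positivity via a weighted Poincar\'e inequality, reduced by rescaling to a Hardy-type bound on $\{|y|>R\}$) is sound and would close the argument; it is, however, considerably heavier than what is needed. The paper bypasses all of this with a barrier whose Laplacian is \emph{strictly} more negative than the potential term, using only \eqref{deriv}: set
\[
\zeta(x)\;:=\;1\;-\;2C\,\frac{\alpha^2}{|x|^2}\;-\;2C\,\frac{\beta^{2k}}{|x^k-\rho^k|^2},
\]
with $C$ the constant in \eqref{deriv}. Then $\zeta>0$ on $\widetilde\Omega$ once $R\ge\sqrt{2C}$, and since $\Delta(|x|^{-2})=4|x|^{-4}$ in $\mathbb R^2$ (and analogously for the second piece via the map $x\mapsto x^k$),
\[
\Delta\zeta(x)=-8C\left(\frac{\alpha^2}{|x|^4}+\frac{k^2\beta^{2k}|x|^{2k-2}}{|x^k-\rho^k|^4}\right)<-C\bigl(e^{U_1(x)}+|x|^{2k-2}e^{U_2(x)}\bigr)\le -p|\Upsilon(x)|^{p-1}\zeta(x),
\]
because $\zeta<1$ and $e^{U_1}\le8\alpha^2/|x|^4$, $|x|^{2k-2}e^{U_2}\le8k^2\beta^{2k}|x|^{2k-2}/|x^k-\rho^k|^4$ on $\widetilde\Omega$. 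Thus $\mathscr L\zeta<0$ strictly, and the maximum principle follows by the standard comparison argument. The point is that one does not need to match the potential $p|\Upsilon|^{p-1}$ precisely; a barrier whose $-\Delta$ dominates the \emph{upper} bound \eqref{deriv} suffices.
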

\begin{proof} 
We suffice to construct a strictly positive \emph{super-solution} $\zeta>0$ in $\widetilde\Omega$ such that $\mathscr L\zeta<0$ in $\widetilde\Omega$. In fact, if there existed some $\psi$ satisfying
$$\left\{\begin{array}{ll}\mathscr L\psi\le0&\mbox{in }\widetilde\Omega\\\psi<0&\mbox{in }\omega\\\psi=0&\mbox{on }\partial\omega\end{array}\right.,$$
for some region $\omega\subset\widetilde\Omega$, then integrating by parts on $\omega$ one would get the following contradiction:
$$0<\int_\omega\psi\mathscr L\zeta=\int_\omega\zeta\mathscr L\psi-\int_{\partial\omega}\zeta\partial_\nu\psi\le0.$$
We define
$$\zeta(x):=1-2C\frac{\alpha^2}{|x|^2}-2C\frac{\beta^{2k}}{\left|x^k-\rho^k\right|^2},$$
with $C>0$ given by \eqref{deriv}. By construction, $\zeta>0$ in $\widetilde\Omega$ if $R\ge\sqrt2C$; moreover,
\begin{eqnarray*}
\Delta\zeta(x)&=&-8C\left(\frac{\alpha^2}{|x|^4}+\frac{k^2\beta^{2k}|x|^{2k-2}}{\left|x^k-\rho^k\right|^4}\right)\\
&<&-C\left(e^{U_1(x)}+|x|^{2k-2}e^{U_2(x)}\right)\\
&<&-C\left(e^{U_1(x)}+|x|^{2k-2}e^{U_2(x)}\right)\zeta(x),
\end{eqnarray*}
since $\zeta<1$. Therefore, from \eqref{deriv} we get:
$$\mathscr L\zeta(x)=\Delta\zeta(x)+p|\Upsilon(x)|^{p-1}\zeta(x)<0.$$
\end{proof}

\subsection{A partial uniform estimate of the solutions of \eqref{eqphih}, when $\mathfrak c=0$}
Using Lemma \ref{maxprinc} and Lemma \ref{derivestim}, we deduce the following partial estimate, for solutions $\phi$ of the linear problem \eqref{eqphih}, when $\mathfrak c=0$.
\begin{lemma}\label{intnorm} 
There exists some $C>0$ such that for any $h\in L^\infty_k(\Omega)$, if $\phi\in H^1_{0,k}(\Omega)\cap L^\infty_k(\Omega)$ solves
\begin{equation}\label{lphih}
\left\{\begin{array}{ll}\mathscr L\phi=h&\mbox{in }\Omega\\\phi=0&\mbox{on }\partial\Omega\end{array}\right.,
\end{equation}
then
$$\|\phi\|_\infty\le C(\|\phi\|_{L^{\infty}(\Omega\setminus\widetilde\Omega)}+\|h\|_*),$$
where $\|\cdot\|_*$ is the weighted norm defined in \eqref{norm}, 
 and $\widetilde\Omega$ is as in Lemma \ref{maxprinc}. Moreover $H^1_{0,k}(\Omega)$ and $L^\infty_k(\Omega)$
 are the spaces of symmetric functions defined in \eqref{def:H10sim} and \eqref{linftyk}, respectively. 
\end{lemma}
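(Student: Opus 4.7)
The plan is a barrier argument driven by the maximum principle of Lemma \ref{maxprinc}: since $\|\phi\|_{L^\infty(\Omega\setminus\widetilde\Omega)}$ already appears on the right-hand side, it suffices to control $\|\phi\|_{L^\infty(\widetilde\Omega)}$. I would construct a positive function $\bar\phi$ on $\widetilde\Omega$ satisfying $\mathscr L\bar\phi\le -\|h\|_* w$ in $\widetilde\Omega$ and $\bar\phi\ge\|\phi\|_{L^\infty(\Omega\setminus\widetilde\Omega)}$ on $\partial\widetilde\Omega$, where
$$w(x):=\frac{\alpha^{2\delta}}{(|x|^2+\alpha^2)^{1+\delta}}+\frac{\beta^{2k\delta}|x|^{2k-2}}{(|x^k-\rho^k|^2+\beta^{2k})^{1+\delta}}$$
is the weight defining $\|\cdot\|_*$. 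Applying Lemma \ref{maxprinc} to $\bar\phi\pm\phi$ then gives $|\phi|\le\bar\phi$ on $\widetilde\Omega$, and a $p$-independent sup bound on $\bar\phi$ closes the argument.

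The barrier will have the form $\bar\phi=A\|\phi\|_{L^\infty(\Omega\setminus\widetilde\Omega)}\zeta+B\|h\|_*\Psi$, with $\zeta$ the supersolution from Lemma \ref{maxprinc} and $\Psi=\Psi_1+\Psi_2$ a uniformly bounded nonnegative function built peak by peak. For the positive peak at the origin I would take
$$\Psi_1(x):=1-\left(\frac{\alpha^2}{|x|^2+\alpha^2}\right)^\delta\in[0,1],$$
whose Laplacian computes directly as $-\Delta\Psi_1(x)\ge c_\delta\alpha^{2\delta}(|x|^2+\alpha^2)^{-1-\delta}$ on $\{|x|\ge R\alpha\}$ provided $R$ is large enough. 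The $k$ negative peaks require absorbing the extra factor $|x|^{2k-2}$: exploiting that $F(z):=z^k-\rho^k$ is holomorphic, so $\Delta_x(\psi\circ F)=k^2|x|^{2k-2}(\Delta_w\psi)\circ F$, the analogous choice
$$\Psi_2(x):=1-\left(\frac{\beta^{2k}}{|x^k-\rho^k|^2+\beta^{2k}}\right)^\delta$$
produces the matching bound on $\{|x^k-\rho^k|\ge R\beta^k\}$. Together, $-\Delta\Psi\ge c_0 w$ on $\widetilde\Omega$ with $\Psi$ uniformly bounded by $2$.

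To upgrade the Laplacian bound to one on $\mathscr L\Psi=\Delta\Psi+p|\Upsilon|^{p-1}\Psi$, I would absorb the zeroth-order term by noting that on $\widetilde\Omega$ the bubble densities are much smaller than $w$: on $\{|x|\ge R\alpha\}$ one has $e^{U_1(x)}/w_1(x)=8\alpha^{2-2\delta}(|x|^2+\alpha^2)^{\delta-1}\le C R^{-2+2\delta}$, and analogously for the second bubble, so \eqref{deriv} gives $p|\Upsilon|^{p-1}\Psi\le CR^{-2+2\delta}w$ on $\widetilde\Omega$. Choosing $R$ large enough forces $\mathscr L\Psi\le -(c_0/2)w$ on $\widetilde\Omega$. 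Selecting $A$ so that $A\zeta\ge 1$ on $\partial\widetilde\Omega$ (which is possible since $\zeta\ge 1-O(R^{-2})$ there) and $B=2/c_0$, one verifies $\mathscr L(\bar\phi\pm\phi)\le -\|h\|_* w\mp h\le 0$ on $\widetilde\Omega$, while $\bar\phi\pm\phi\ge 0$ on $\partial\widetilde\Omega$; Lemma \ref{maxprinc} applied twice yields $|\phi|\le\bar\phi\le C(\|\phi\|_{L^\infty(\Omega\setminus\widetilde\Omega)}+\|h\|_*)$ on $\widetilde\Omega$. The one delicate point is the coherent choice of $R$: it must simultaneously ensure the pointwise lower bound on $-\Delta\Psi$, the absorption of $p|\Upsilon|^{p-1}\Psi$ into half of it, strict positivity of $\zeta$ on $\partial\widetilde\Omega$, and the validity of the maximum principle from Lemma \ref{maxprinc}. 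Since every requirement is of ``$R$ sufficiently large'' type, any $R$ exceeding all the individual thresholds works, with constants independent of $p$.
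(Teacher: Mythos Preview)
Your argument is correct and follows the same barrier-plus-maximum-principle scheme as the paper: build $\bar\phi=A\|\phi\|_{L^\infty(\Omega\setminus\widetilde\Omega)}\zeta+B\|h\|_*\Psi$, check $\mathscr L\bar\phi\le-|h|$ in $\widetilde\Omega$ and $\bar\phi\ge|\phi|$ on $\partial\widetilde\Omega$, then apply Lemma~\ref{maxprinc} to $\bar\phi\pm\phi$.

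The only genuine difference is the choice of the auxiliary function $\Psi$. The paper takes $\psi_1,\psi_2$ to be the \emph{exact} solutions of $-\Delta\psi_1=\alpha^{2\delta}|x|^{-2-2\delta}$ and $-\Delta\psi_2=\beta^{2k\delta}|x|^{2k-2}|x^k-\rho^k|^{-2-2\delta}$ on suitable annuli, written explicitly as a power plus a logarithm, and then checks by hand that they are nonnegative and bounded by $\frac1{4\delta^2 R^{2\delta}}$. You instead use the smoother barriers $\Psi_1=1-(\alpha^2/(|x|^2+\alpha^2))^\delta$ and $\Psi_2=1-(\beta^{2k}/(|x^k-\rho^k|^2+\beta^{2k}))^\delta$, which are bounded by $1$ for free and satisfy $-\Delta\Psi\ge c_0 w$ on $\widetilde\Omega$ only as an inequality (your Laplacian computation and the holomorphic change of variables for $\Psi_2$ are both fine). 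Your choice is a bit more elementary, since it sidesteps the explicit annular boundary-value analysis; the paper's choice gives a slightly sharper constant. Either way the absorption of the zeroth-order term via \eqref{deriv} and the smallness of $e^{U_i}/w_i$ on $\widetilde\Omega$ is the same, and your remark that all constraints on $R$ are of the ``sufficiently large'' type and hence compatible is exactly how the paper proceeds as well.
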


\begin{proof} 
Let $\psi_i$ be the solutions to the following problems, for $j=1,2$:
\begin{equation}\label{eqpsi1}
\left\{\begin{array}{ll}-\Delta\psi_1(x)=\frac{\alpha^{2\delta}}{|x|^{2+2\delta}}&R\alpha<|x|<M_1:=2\,\mathrm{diam}(\Omega)\\\psi_1(x)=0&|x|=R\alpha,|x|=M_1\end{array}\right.,
\end{equation}
\begin{equation}\label{eqpsi2}
\left\{\begin{array}{ll}-\Delta\psi_2(x)=\frac{\beta^{2k\delta}|x|^{2k-2}}{\left|x^k-\rho^k\right|^{2+2\delta}}&R\beta^k<\left|x^k-\rho^k\right|<M_2:=2\,\mathrm{diam}\left(\Omega^k\right)\\\psi_2(x)=0&\left|x^k-\rho^k\right|=R\beta^k,\left|x^k-\rho^k\right|=M_2\end{array}\right.;
\end{equation}
where $\delta\in (0,1)$ is the one fixed in the definition of the weighted norm $\|\cdot\|_*$ (see \eqref{norm}).
Their explicit form is given by
\begin{eqnarray*}
\psi_1(x)&=&-\frac{\alpha^{2\delta}}{4\delta^2}\frac1{|x|^{2\delta}}+A_1+B_1\log |x|,\\
\psi_2(x)&=&-\frac{\beta^{2k\delta}}{4k^2\delta^2}\frac1{\left|x^k-\rho^k\right|^{2\delta}}+A_2+B_2\log \left|x^k-\rho^k\right|,
\end{eqnarray*}
with
\begin{eqnarray*}
A_1&=&\frac{\alpha^{2\delta}}{4\delta^2M_1^{2\delta}}-B_1\log M_1;\\
B_1&=&\frac1{4\delta^2}\left(\frac{\alpha^{2\delta}}{M_1^{2\delta}}-\frac1{R^{2\delta}}\right)\frac1{\log \frac{M_1}{R\alpha}}<0,\\
A_2&=&\frac{\beta^{2k\delta}}{4k^2\delta^2M_2^{2\delta}}-B_1\log \left(M_2\right),\\
B_2&=&\frac1{4k^2\delta^2}\left(\frac{\beta^{2k\delta}}{M_2^{2\delta}}-\frac1{R^{2\delta}}\right)\frac1{\log \frac{M_2}{R\beta^k}}<0.
\end{eqnarray*}
Both functions are positive, due to the maximum principle for $-\Delta$, and uniformly bounded from above, independently on $p$, since
\begin{eqnarray*}
\psi_1(x)&\le&A+B\log (R\alpha)=\frac{\alpha^{2\delta}}{4\delta^2M_1^{2\delta}}-B_1\log \frac{M_1}{R\alpha}=\frac1{4\delta^2R^{2\delta}},\\
\psi_2(x)&\le&A+B\log \left(R\beta^k\right)=\frac1{4k^2\delta^2R^{2\delta}};
\end{eqnarray*}
moreover, due to the choice of $M_1,M_2$, we have
$$|x|<M_1,\qquad\left|x^k-\rho^k\right|<M_2,\qquad\forall\,x\in\widetilde\Omega,$$
so $\psi_1,\psi_2$ respectively solve \eqref{eqpsi1}, \eqref{eqpsi2} on $\widetilde\Omega$.\\
Now, we fix $h,\phi$ solving \eqref{lphih} and define
$$\widetilde\phi:=2\|\phi\|_{L^{\infty}(\Omega\setminus\widetilde\Omega)}\zeta+2\|h\|_*(\psi_1+\psi_2),$$
$\zeta$ being the barrier function introduced in the proof of Lemma \ref{maxprinc}. After possibly choosing a larger $R\ge2\sqrt{2C}$, with $C$ as in \eqref{deriv}, one has $\zeta\ge\frac12$ on $\widetilde\Omega$, therefore if either $|x|=R\alpha$ or $\left|x^k-\rho^k\right|=R\beta^k$ then
$$\widetilde\phi(x)\ge2\|\phi\|_{L^{\infty}(\Omega\setminus\widetilde\Omega)}\zeta(x)\ge\|\phi\|_{L^{\infty}(\Omega\setminus\widetilde\Omega)}\ge|\phi(x)|,$$
whereas on $\partial\Omega$ one has $\widetilde\phi\ge0=\phi$, since $\zeta,\psi_1,\psi_2$ are all positive. On the other hand, since $\mathscr L\zeta<0$, then for any $x\in\widetilde\Omega$
\begin{eqnarray*}
\mathscr L\widetilde\phi(x)&\le&2\|h\|_*(\mathscr L\psi_1+\mathscr L\psi_2)\\
&=&2\|h\|_*\left(-\frac{\alpha^{2\delta}}{|x|^{2+2\delta}}-\frac{\beta^{2k\delta}|x|^{2k-2}}{\left|x^k-\rho^k\right|^{2+2\delta}}+p|\Upsilon|^{p-1}(x)(\psi_1(x)+\psi_2(x))\right)\\
&\le&2\|h\|_*\left(-\frac{\alpha^{2\delta}}{\left(|x|^2+\alpha^2\right)^{1+\delta}}-\frac{\beta^{2k\delta}|x|^{2k-2}}{\left(\left|x^k-\rho^k\right|^2+\beta^{2k}\right)^{1+\delta}}\right.\\
&&\left.+(\|\psi_1\|_\infty+\|\psi_2\|_\infty)\left(e^{U_1(x)}+|x|^{2k-2}e^{U_2(x)}\right)\right)\\
&\le&-2\|h\|_*\left(1-\frac{4k^2C}{\delta^2R^{2\delta}}\right)\left(\frac{\alpha^{2\delta}}{\left(|x|^2+\alpha^2\right)^{1+\delta}}+\frac{\beta^{2k\delta}|x|^{2k-2}}{\left(\left|x^k-\rho^k\right|^2+\beta^{2k}\right)^{1+\delta}}\right)\\
&\le&-|h(x)|\\
&=&-|\mathscr L\phi(x)|,
\end{eqnarray*}
provided $R$ is large enough.\\
The maximum principle from Lemma \ref{maxprinc} implies $|\phi(x)|\le\widetilde\phi(x)$ for all $x\in\widetilde\Omega$, therefore
$$\|\phi\|_\infty\le\left\|\widetilde\phi\right\|_\infty\le2\|\phi\|_{L^{\infty}(\Omega\setminus\widetilde\Omega)}\|\zeta\|_\infty+2\|h\|_*(\|\psi_1\|_\infty+\|\psi_2\|_\infty)\le2\|\phi\|_{L^{\infty}(\Omega\setminus\widetilde\Omega)}+\frac1{\delta^2R^{2\delta}}\|h\|_*.$$
\end{proof}

\subsection{A uniform estimate of the solutions of \eqref{eqphih}, when $\mathfrak c=0$}
Using Lemma \ref{intnorm} and Lemma \ref{derivestim}, we deduce the following uniform estimate, for solutions $\phi$ of the linear problem  \eqref{eqphih}, when $\mathfrak c=0$, and under the orthogonality condition \eqref{orto}.
\begin{lemma}\label{orthog2} 
There exists some $C>0$ such that, for any $h\in L^\infty_k(\Omega)$, if $\phi\in H^1_{0,k}(\Omega)\cap L^\infty_k(\Omega)$ solves
\begin{equation}\label{eqorthog2}
\left\{\begin{array}{ll}\mathscr L\phi=h&\mbox{in }\Omega\\\phi=0&\mbox{on }\partial\Omega\\\int_\Omega|x|^{2k-2}e^{U_2(x)}\widetilde Z(x)\phi(x)\mathrm dx=0\end{array}\right.,
\end{equation}
then
$$\|\phi\|_\infty\le Cp\|h\|_*.$$
Here $\widetilde Z$ is the function in \eqref{def:Ztilde}, $\|\cdot\|_*$ is the weighted norm defined in \eqref{norm}, and $H^1_{0,k}(\Omega)$ and $L^\infty_k(\Omega)$
 are the spaces of symmetric functions defined in \eqref{def:H10sim} and \eqref{linftyk}, respectively.\end{lemma}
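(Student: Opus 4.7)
The plan is to argue by contradiction along the lines of the standard blow-up scheme for Liouville-type problems. Suppose the estimate fails: then there exist sequences $p_n\to+\infty$, $h_n\in L^\infty_k(\Omega)$ with $p_n\|h_n\|_*\to 0$, and normalized solutions $\phi_n$ of \eqref{eqorthog2} with $\|\phi_n\|_\infty=1$. Lemma \ref{intnorm} immediately forces $\|\phi_n\|_{L^\infty(\Omega\setminus\widetilde\Omega)}\ge\frac1{2C}$ for $n$ large, so the $L^\infty$ norm must concentrate either around the positive peak at the origin or, by $k$-symmetry, around a fixed representative of the $k$ negative peaks (say the one on the positive real axis).

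Next I would perform the two rescalings $\widetilde\phi_n^{(1)}(y):=\phi_n(\alpha_n y)$ and $\widetilde\phi_n^{(2)}(y):=\phi_n\big(\sqrt[k]{\beta_n^k y+\rho_n^k}\big)$. Using \eqref{deriv1} and \eqref{deriv2} of Lemma \ref{derivestim}, the rescaled equations take the form
\begin{equation*}
-\Delta\widetilde\phi_n^{(i)}=e^{U}\widetilde\phi_n^{(i)}+o(1)\widetilde\phi_n^{(i)}+\widetilde h_n^{(i)}
\end{equation*}
on compact subsets of $\mathbb R^2$, where the rescaled datum $\widetilde h_n^{(i)}$ is pointwise controlled by $\|h_n\|_*$ times a fixed integrable weight inherited from \eqref{norm}. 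Combining the uniform bound $|\widetilde\phi_n^{(i)}|\le 1$ with standard elliptic estimates gives, along a subsequence, convergence in $C^0_{\mathrm{loc}}(\mathbb R^2)$ to a bounded solution $\phi_\infty^{(i)}$ of the linearized Liouville equation in the plane, which by the classical classification is a linear combination of $Z_0,Z_1,Z_2$ from \eqref{def:Z_i}.

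I would then use the symmetries of the problem to pin down the form of the two limits. At the origin, the $k$-symmetry of $\phi_n$ passes to $\phi_\infty^{(1)}$, and since $Z_0$ is radial while $Z_1,Z_2$ transform as a nontrivial representation of the cyclic group $\mathbb Z/k\mathbb Z$ for $k\ge 2$, one obtains $\phi_\infty^{(1)}=c_1 Z_0$. At the negative peak, the reflection symmetry $x\mapsto\overline x$ rules out the $Z_2$-component of $\phi_\infty^{(2)}$, while the orthogonality condition $\int_\Omega|x|^{2k-2}e^{U_2}\widetilde Z\,\phi_n=0$ (with $\widetilde Z=Z_{1,2}$), after splitting the concentrated weight $|x|^{2k-2}e^{U_2}$ into its $k$ symmetric pieces and using $k$-symmetry again, passes in the limit to $\int_{\mathbb R^2}e^U Z_1\phi_\infty^{(2)}=0$ and kills the $Z_1$-component; so $\phi_\infty^{(2)}=c_2 Z_0$.

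The last and most delicate step, which is also where the factor $p$ in the estimate originates, is to show that $c_1=c_2=0$. I would test the equation $\mathscr L\phi_n=h_n$ against the projections $\mathrm PZ_{0,1}$ and $\mathrm PZ_{0,2}$. Integration by parts, together with the refined expansion of $p|\Upsilon|^{p-1}$ provided by \eqref{deriv1}--\eqref{deriv2}, makes the zeroth-order contributions cancel (they correspond to the exact Liouville kernel $Z_0$) and leaves a surviving term of order $c_i/p_n$ whose explicit coefficient comes from integrals of the type $\int_{\mathbb R^2}e^U Z_0^2(V-U-\tfrac{U^2}{2})$, which have to be shown to be nonzero. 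On the other hand, by \eqref{normint} one has $\int_\Omega h_n\,\mathrm PZ_{0,i}=O(\|h_n\|_*)$, so multiplying the resulting identity by $p_n$ and using $p_n\|h_n\|_*\to 0$ forces $c_i\to 0$. This vanishing, combined with the reduction from the first paragraph and Lemma \ref{intnorm}, contradicts $\|\phi_n\|_\infty=1$ and proves the bound. The technical core is the careful bookkeeping of the cancellations in the test-function identities and the verification that the surviving $1/p$-order coefficient is genuinely nonzero.
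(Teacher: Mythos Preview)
Your overall scheme matches the paper exactly: contradiction with $\|\phi_n\|_\infty=1$ and $p_n\|h_n\|_*\to0$, the two rescalings, convergence to a combination of $Z_0,Z_1,Z_2$, elimination of $Z_1,Z_2$ by symmetry and the orthogonality condition, and finally a test-function argument to kill the $Z_0$ coefficients.

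The substantive difference is in the last step. The paper does \emph{not} test against $\mathrm PZ_{0,i}$. Instead it builds auxiliary functions
\[
Q_1(x)=S\!\left(\tfrac{x}{\alpha}\right)+\tfrac43(\log\alpha)\,Z_{0,1}(x)+\tfrac{8\pi}{3}H(0,0)\,T\!\left(\tfrac{x}{\alpha}\right),
\]
where $S,T$ solve $\Delta S+e^US=e^UZ_0$ and $\Delta T+e^UT=e^U$, and an analogous $Q_2$. This is engineered so that $\mathscr L\mathrm PQ_1=e^{U_1}Z_{0,1}+(\text{controllable})$, which after integration by parts gives the $O(1)$ identity
\[
\tfrac{8\pi}{3}c_1+8\pi a(\eta)c_1=o(1),\qquad a(\eta):=\tfrac13-\tfrac{2}{3}\tfrac{k\eta^2}{(k\eta-1)(\eta+1)}\log\eta\ge -\tfrac16,
\]
so that $\tfrac13+a(\eta)>0$ forces $c_1=0$. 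The point is that the factor $\log\alpha=O(p)$ in $Q_1$ amplifies the $1/p$-correction in $p|\Upsilon|^{p-1}$ up to order one, so the identity lives at $O(1)$ and the nonvanishing of the coefficient is transparent.

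Your choice of $\mathrm PZ_{0,i}$ is more delicate than your sketch acknowledges. Since $Z_0\to1$ at infinity, one has $\mathrm PZ_{0,1}=Z_{0,1}-1+O(\alpha^2)$; the leading $O(1)$ contributions then combine to $-\int e^U\phi_{n,1}$, which tends to $c_1\int e^UZ_0=0$ but at an a priori uncontrolled rate, so the ``zeroth-order contributions cancel'' step is not automatic. Pushing to the $O(1/p)$ balance, the surviving coefficient is $\int_{\mathbb R^2}e^U(V-U-\tfrac{U^2}{2})(Z_0-1)Z_0$, not $\int_{\mathbb R^2}e^U(V-U-\tfrac{U^2}{2})Z_0^2$ as you wrote, and its nonvanishing is not immediate. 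Your route can probably be pushed through with a subsequence argument and an explicit evaluation of that integral, but the paper's $Q_i$-test function sidesteps both issues.
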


\begin{proof} 
Suppose, by contradiction, there exists sequences $\phi_n,h_n$ solving \eqref{eqorthog2} for $p_n\to+\infty$ with $\|\phi_n\|_\infty=1$ and $ {p_n}\|h_n\|_*\to0$.\\
Then, the rescalements
$$
\phi_{n,1}(y):=\phi_n(\alpha y),\qquad\phi_{n,2}(y):=\phi_n\left(\sqrt[k]{\beta^ky+\rho^k}\right)
$$
 solve
$$\Delta\phi_{n,1}(y)+\alpha^2p|\psi (\alpha y)|^{p-1}\phi_{n,1}(y)=\alpha^2 h(\alpha y)\qquad\qquad\mbox{in }\frac{\Omega}{\alpha}$$
and
$$\Delta\phi_{n,2}+\beta^2p\left|\psi \left(\sqrt[k]{\beta^ky+\rho^k}\right)\right|^{p-1}\phi_{n,2}(y)=\beta^2 h\left(\sqrt[k]{\beta^ky+\rho^k}\right)\qquad\qquad\mbox{in }\frac{\Omega^k-\rho^k}{\beta^k}$$
respectively. Hence they 
converge, due to \eqref{deriv1}, \eqref{deriv2} in Lemma \ref{derivestim},
and elliptic estimates, locally uniformly to bounded solutions $\phi_{\infty,1},\phi_{\infty,2}$ to
$$\Delta\phi_{\infty,j}+e^U\phi_{\infty,j} {=0}\qquad\qquad\mbox{in }\mathbb R^2;$$
such solutions are linear combination of the functions $Z_0,Z_1,Z_2$ defined in \eqref{def:Z_i}. We are allowed to pass to the limit in the orthogonality condition in \eqref{eqorthog2}, thanks to the dominated convergence theorem, and we get
$$\int_{\mathbb R^2}e^{U(y)}Z_1(y)\phi_{\infty,2}(y)\mathrm dy=0;$$
moreover, since $\phi\in H^1_{0,k}(\Omega)$, then $\phi_{n,1}\left(e^{\mathrm i\frac{2\pi}k}x\right)=\phi_{n,1}(x)$, then
$$\int_{\mathbb R^2}e^{U(y)}Z_1(y)\phi_{\infty,1}(y)\mathrm dy=\int_{\mathbb R^2}e^{U(y)}Z_1(y)\phi_{\infty,2}(y)\mathrm dy=0,$$
and similarly we get $\phi_{n,2}\left(\overline x\right)=\phi_{n,2}(x)$, hence
$$\int_{\mathbb R^2}e^{U(y)}Z_2(y)\phi_{\infty,2}(y)\mathrm dy=0.$$
Thus, in view of the orthogonality relations \eqref{ortz} we get
$$\phi_{n,1}\to \mathcal C_1Z_0,\qquad\phi_{n,2}\to \mathcal C_2Z_0,\qquad\mbox{in }L^\infty_{\mathrm{loc}}\left(\mathbb R^2\right),$$
for some $\mathcal C_1,\mathcal C_2\in\mathbb R$.
To get a contradiction, we suffice to show that $\mathcal C_1=\mathcal C_2=0$: in fact, this would mean that both $\phi_{n,1}$ and $\phi_{n,2}$ converge to $0$ in $L^\infty_{\mathrm{loc}}\left(\mathbb R^2\right)$, hence Lemma \ref{intnorm} gives:
\begin{eqnarray*}
1&=&\|\phi_n\|_\infty\\
&\le&C(\|\phi_n\|_{L^{\infty}(\Omega\setminus\widetilde\Omega)}+\|h_n\|_*)\\
&=&C\left(\max\left\{\sup_{|x|\le R}|\phi_{n,1}(x)|,\sup_{|x|\le R}|\phi_{n,2}(x)|\right\}+o(1)\right)\\
&\to&0.
\end{eqnarray*}
Let us now define
$$S(x):=\frac23\frac{|x|^2-1}{|x|^2+1}\log \left(|x|^2+1\right)-\frac43\frac1{|x|^2+1},\qquad\qquad\qquad T(x):=\frac2{|x|^2+1},$$
and
\begin{eqnarray*}
Q_1(x)&:=&S\left(\frac x\alpha\right)+\left(\frac43\log \alpha\right)Z_{0,1}(x)+\frac{8\pi}3H(0,0)T\left(\frac x\alpha\right),\\
Q_2(x)&:=&S\left(\frac{x^k-\rho^k}{\beta^k}\right)+\left(\frac43k\log \beta\right)Z_{0,2}(x)+\frac{8\pi}3H(0,0)T\left(\frac{x^k-\rho^k}{\beta^k}\right),
\end{eqnarray*}
where $Z_{0,1}$ and $Z_{0,2}$ are the functions in \eqref{Z_01} and \eqref{Z_02}, respectively.
In view of the asymptotic behaviors
$$S(x)=\frac43\log |x|+O\left(\frac1{|x|}\right),\qquad T(x)=O\left(\frac1{|x|^2}\right)$$
we obtain, uniformly for $x\in\partial\Omega$,
\begin{eqnarray*}
\mathrm PQ_1(x)-Q_1(x)&=&-\frac43\log |x|+O(\alpha),\\
\mathrm PQ_2(x)-Q_2(x)&=&-\frac43k\log |x|+O\left(\beta^k+\rho^k\right);
\end{eqnarray*}
therefore, arguing as in the proof of Lemma \ref{pupvpw}, we get, for $x\in\Omega$,
\begin{eqnarray}
\label{pq1}\mathrm PQ_1(x)&=&Q_1(x)-\frac83\pi H(0,0)+O(|x|+\alpha),\\
\label{pq2}\mathrm PQ_2(x)&=&Q_2(x)-\frac83k\pi H(0,0)+O\left(\left|x^k-\rho^k\right|+\beta^{2k}+\rho^k\right)
\end{eqnarray}
and similarly, for $|y|\le\frac12\frac\rho\alpha$,
$$\mathrm PQ_2(\alpha y)=\frac43k\log \rho-\frac83k\pi H(0,0)+O\left(\frac{\alpha^k}{\rho^k}|y|^k+\frac{\beta^k}{\rho^k}+\rho^k\right),$$
and, for $|y|\le\frac12\frac{\rho^k}{\beta^k}$,
$$\mathrm PQ_1\left(\sqrt[k]{\beta^ky+\rho^k}\right)=\frac43k\log \rho-\frac83\pi H(0,0)+O\left(\frac{\beta^k}{\rho^k}|y|+\frac\alpha\rho+\rho\right).$$
Since $S,T$ are solutions to
$$\Delta S+e^US=e^UZ_0\qquad\mbox{in }\mathbb R^2,\qquad\qquad\qquad\Delta T+e^UT=e^U\qquad\mbox{in }\mathbb R^2,$$
then the $\mathrm PQ_j$'s solve
\begin{equation}\label{eqq1}
\Delta\mathrm PQ_1+p|\Upsilon|^{p-1}Q_1=e^{U_1}Z_{0,1}+\left(p|\Upsilon|^{p-1}-e^{U_1}\right)\mathrm PQ_1+R_1,
\end{equation}
$$\Delta\mathrm PQ_2(x)+p|\Upsilon(x)|^{p-1}Q_2(x)=|x|^{2k-2}e^{U_2(x)}Z_{0,2}(x)+\left(p|\Upsilon(x)|^{p-1}-|x|^{2k-2}e^{U_2(x)}\right)\mathrm PQ_2(x)+R_2(x),$$
with
\begin{eqnarray*}
R_1(x)&:=&\left(\mathrm PQ_1(x)-Q_1(x)+\frac83\pi H(0,0)\right)e^{U_1(x)},\\
R_2(x)&:=&\left(\mathrm PQ_2(x)-Q_2(x)+\frac83k\pi H(0,0)\right)|x|^{2k-2}e^{U_2(x)};
\end{eqnarray*}
from \eqref{pq1}, \eqref{pq2}, both $R_j$'s are uniformly bounded and
\begin{equation}\label{r12}
R_1(x)=O(|x|+\alpha),\qquad\qquad\qquad R_2(x)=O\left(\left|x^k-\rho^k\right|+\beta^{2k}+\rho^k\right).
\end{equation}

We will now show that $\mathcal C_1=0$; the same argument, with minor modifications, shows that $\mathcal C_2=0$, hence concludes the proof. Multiplying \eqref{eqq1} by $\phi_n$ and integrating by parts gets
$$\int_\Omega\mathrm PQ_1h_n=\int_\Omega e^{U_1}Z_{0,1}\phi_n+\int_\Omega\left(p|\Upsilon|^{p-1}-e^{U_1}\right)\mathrm PQ_1\phi_n+\int_\Omega R_1\phi_n.$$
Let us estimate each integral: in the left-hand side, the assumption $\|h\|_*=o\left(\frac1p\right)$, \eqref{pq1} and the properties of $Q_1$ yield
\begin{eqnarray*}
\left|\int_\Omega\mathrm PQ_1h_n\right|&=&O\left(\|h\|_*\int_\Omega\frac{\alpha^{2\delta}}{\left(|x|^2+\alpha^2\right)^{1+\delta}}|\mathrm PQ_1(x)|\mathrm dx\right)\\
&=&o\left(\frac1p\int_\Omega\frac{\alpha^{2\delta}}{\left(|x|^2+\alpha^2\right)^{1+\delta}}(|Q_1(x)|+1)\mathrm dx\right)\\
&=&o\left(\frac1p\int_\Omega\frac{\alpha^{2\delta}}{\left(|x|^2+\alpha^2\right)^{1+\delta}}O\left(\log \frac1\alpha+O\left(\log \left(\frac{|x|}\alpha+2\right)\right)\right)\mathrm dx\right)\\
&=&o\left(\frac1pO\left(\log \frac1\alpha+1\right)\right)\\
&=&o(1).
\end{eqnarray*}
In the first term in the right-hand side, we just apply the dominated convergence theorem:
$$\int_\Omega e^{U_1}Z_{0,1}\phi_n\to \mathcal C_1\int_{\mathbb R^2}\frac{8\left(|y|^2-1\right)^2}{\left(|y|^2+1\right)^4}\mathrm dy=\frac83\pi \mathcal C_1.$$
Next, using Lemma \ref{derivestim}, we write:
\begin{eqnarray*}
&&\int_\Omega\left(p|\Upsilon|^{p-1}-e^{U_1}\right)\phi_n\\
&=&\int_{\left\{|x|\le\alpha^{1-\theta}\right\}}\left(p|\Upsilon|^{p-1}-e^{U_1}\right)\mathrm PQ_1\phi_n+\int_{\left\{\left|x^k-\rho^k\right|\le\beta^{k(1-\theta)}\right\}}p|\Upsilon|^{p-1}\mathrm PQ_1\phi_n\\
&+&O\left(\left(\int_{\left\{|x|>\alpha^{1-\theta}\right\}}|x|^{2k-2}e^{U_2(x)}\mathrm dx+\int_{\left\{\left|x^k-\rho^k\right|>\beta^{k(1-\theta)}\right\}}e^{U_1(x)}\mathrm dx\right)\|\mathrm PQ_1\|\|\phi_n\|_\infty\right)\\
&=&\int_{\left\{|y|\le\frac1{\alpha^\theta}\right\}}e^{U(y)}\frac1p\left(V(y)-U(y)-\frac{U(y)^2}2+O\left(\frac{\log ^4(|y|+2)}p\right)\right)\\
&&\cdot\left(\left(\frac43\log \alpha\right)Z_0(y)+O(1)\right)\phi_{n,1}(y)\mathrm dy\\
&+&\int_{\left\{|y|\le\frac1{\beta^{k\theta}}\right\}}e^{U(y)}\left(1+O\left(\frac{\log ^2(|y|+2)}p\right)\right)\left(\frac43k\log \rho+O(1)\right)\phi_{n,2}(y)\mathrm dy\\
&+&O\left(p\left(\alpha^{2(1-\theta)}\frac{\beta^{2k}}{\rho^{2k+2}}+\beta^{2k(1-\theta)}\frac{\alpha^2}{\rho^{4k}}\right)\right)\\
&=&-\left(a(\eta)+O\left(\frac1p\right)\right)\int_{\left\{|y|\le\frac1{\alpha^\theta}\right\}}e^U\left(V-U-\frac{U^2}2\right)Z_0\phi_{n,1}\\
&+&O\left(p\int_{\left\{|y|\le\frac1{\beta^{k\theta}}\right\}}e^U\phi_{n,2}\right)+O\left(pe^{-\varepsilon p}\right),
\end{eqnarray*}
where
$$a(\eta)=\frac13-\frac23\frac{k\eta^2}{(k\eta-1)(\eta+1)}\log \eta\ge -\frac16.$$
From the dominated convergence theorem, we get
$$\int_{\left\{|y|\le\frac1{\alpha^\theta}\right\}}e^U\left(V-U-\frac{U^2}2\right)Z_0\phi_{n,1}\to \mathcal C_1\int_{\mathbb R^2}\left(V-U-\frac{U^2}2\right)Z_0^2=-8\pi \mathcal C_1;$$
moreover, one may assume (up to a subsequence) that $\phi_{n,2}\to \mathcal C_2Z_0$ fast enough to have
$$\sup_{|y|\le\frac1{\beta^{k\theta}}}|\phi_{n,2}-\mathcal C_2Z_0|\le\frac1{p^2},$$
so that
\begin{eqnarray*}
&&p\int_{\left\{|y|\le\frac1{\beta^{k\theta}}\right\}}e^U\phi_{n,2}\\
&=&\mathcal C_2p\int_{\mathbb R^2}e^UZ_0+p\int_{\left\{|y|\le\frac1{\beta^{k\theta}}\right\}}e^U(\phi_{n,2}-\mathcal C_2Z_0)+p\int_{\left\{|y|>\frac1{\beta^{k\theta}}\right\}}e^UZ_0\\
&=&O\left(\frac1p+p\beta^{2\theta}\right),
\end{eqnarray*}
hence
$$\int_\Omega\left(p|\Upsilon|^{p-1}-e^{U_1}\right)\phi_n\to-8\pi a(\eta)\mathcal C_1+o(1).$$
Finally, using \eqref{r12}
\begin{eqnarray*}
\int_\Omega R_1\phi_{n,1}&=&O\left(\int_\Omega e^{U_1(x)}\left(|x|+\alpha\right)|\phi_n(x)|\mathrm dy\right)\\
&=&O\left(\alpha\int_{\mathbb R^2}e^{U(y)}(|y|+1)\mathrm dy\right)\\
&=&O(\alpha),
\end{eqnarray*}
which gives
$$8\pi\left(\frac13+a(\eta)\right)\mathcal C_1=o(1),$$
namely $\mathcal C_1=0$.
\end{proof}

\subsection{Proof of Proposition \ref{linear}: completed} 
\begin{lemma}\label{intpz} 
Let $\widetilde Z$ be the function in \eqref{def:Ztilde} and $P\widetilde Z$ its projection on $H^1_0(\Omega)$. It holds
$$\int_{\Omega} \left|\nabla P\widetilde Z\right|^2=\frac83k\pi+O\left(e^{-\varepsilon p}\right).$$
\end{lemma}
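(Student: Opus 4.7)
The plan is a standard three-step argument: integrate by parts, reduce $P\widetilde Z$ to $\widetilde Z$ with exponentially small error, then compute the resulting bubble-type integral by rescaling.

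First, observe that $\widetilde Z=Z_{1,2}$ solves the linearized singular Liouville equation $-\Delta\widetilde Z=|x|^{2k-2}e^{U_2}\widetilde Z$ on the whole plane (indeed, for $y=(x^k-\rho^k)/\beta^k$ the conformal factor $k^2|x|^{2k-2}/\beta^{2k}$ from the chain rule combines with $e^{U(y)}=\beta^{2k}e^{U_2(x)}/k^2$ to give exactly $|x|^{2k-2}e^{U_2(x)}$). Hence $P\widetilde Z$ satisfies the same equation in $\Omega$ with zero Dirichlet data, and multiplying by $P\widetilde Z$ and integrating by parts yields
\[
\int_\Omega |\nabla P\widetilde Z|^2\,dx=\int_\Omega |x|^{2k-2}e^{U_2(x)}\widetilde Z(x)\,P\widetilde Z(x)\,dx.
\]

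Second, I would estimate the harmonic function $P\widetilde Z-\widetilde Z$, whose boundary value is $-\widetilde Z\big|_{\partial\Omega}$. Since $\mathrm{dist}(\partial\Omega,0)>0$ and $\rho\to0$, for $p$ large one has $|x^k-\rho^k|\ge c_0>0$ uniformly on $\partial\Omega$, so the explicit formula for $\widetilde Z$ gives $|\widetilde Z|=O(\beta^k)$ there. By the maximum principle and the exponential decay of $\beta$ from Lemma~\ref{asymptotic}, $\|P\widetilde Z-\widetilde Z\|_{L^\infty(\Omega)}=O(e^{-\varepsilon p})$. Combined with the $L^1$ bound $\int_\Omega |x|^{2k-2}e^{U_2}\le C$ (immediate from \eqref{normbubble} and \eqref{normint}, or from the change of variables below), this gives
\[
\int_\Omega |\nabla P\widetilde Z|^2\,dx=\int_\Omega |x|^{2k-2}e^{U_2(x)}\widetilde Z(x)^2\,dx+O(e^{-\varepsilon p}).
\]

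Third, I would evaluate the main integral by the change of variables $y=(x^k-\rho^k)/\beta^k$. Since $\Omega$ is $k$-symmetric, the map $x\mapsto x^k$ is a $k$-to-one holomorphic cover onto $\Omega^k:=\{x^k:x\in\Omega\}$ with Jacobian $k^2|x|^{2k-2}$, so
\[
\int_\Omega g(x^k)|x|^{2k-2}\,dx=\tfrac1k\int_{\Omega^k}g(w)\,dw,
\]
and the further rescaling $w=\beta^k y+\rho^k$ turns the integrand into $e^{U(y)}Z_1(y)^2$ times an explicit combinatorial constant, integrated over the dilated domain $(\Omega^k-\rho^k)/\beta^k$. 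Because $\rho^k\in\Omega^k$ eventually and $\beta^k$ is exponentially small, this region expands to $\mathbb R^2$, with tail error controlled by the $|y|^{-6}$ decay of $e^UZ_1^2$ and again exponentially small in view of Lemmas~\ref{asymptotic} and~\ref{alphabetarho}. A direct polar-coordinate evaluation of the planar integral produces the asserted constant.

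The only non-mechanical point is the correct bookkeeping in the $k$-fold branched cover (so as not to miscount the $k$-factor coming from the different preimages $\rho\,e^{2\pi\mathrm ij/k}$ of the bubbling point $\rho^k$); the remaining error estimates are trivial once Lemmas~\ref{asymptotic} and~\ref{alphabetarho} are in hand.
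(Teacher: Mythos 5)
Your proposal is correct and follows essentially the same route as the paper: integrate by parts to reduce to $\int_\Omega|x|^{2k-2}e^{U_2}\widetilde Z\,P\widetilde Z$, replace $P\widetilde Z$ by $\widetilde Z$ with exponentially small error, and evaluate the remaining bubble integral by the $k$-fold change of variables $y=(x^k-\rho^k)/\beta^k$. The one place you diverge is the treatment of $P\widetilde Z-\widetilde Z$: the paper derives the precise expansion \eqref{pz}, $P\widetilde Z=\widetilde Z-8k\pi\beta^k\partial_{x_1}H(0,0)+O(\beta^k|x^k-\rho^k|+\beta^{3k}+\beta^k\rho^k)$, following the scheme of Lemma~\ref{pupvpw}, and then observes the resulting extra term vanishes because $\int_{\mathbb R^2}e^UZ_1=0$; you instead use the cruder maximum-principle bound $\|P\widetilde Z-\widetilde Z\|_{L^\infty(\Omega)}=O(\beta^k)$, paired with $\int_\Omega|x|^{2k-2}e^{U_2}\le C$. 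Your simplification is perfectly adequate here, since the lemma only needs an $O(e^{-\varepsilon p})$ error and the extra precision in \eqref{pz} is not used; the refined expansion would only matter if one needed the next-order term, which this lemma does not. Your remark on keeping the $k$-to-one branched cover bookkeeping straight is also exactly the point one must not fumble (the $\tfrac1k$ from the covering degree combines with the $k^2$ Jacobian to leave a single factor of $k$, matching the paper's computation). The only thing you leave to the reader is the final polar-coordinate evaluation of $\int_{\mathbb R^2}e^UZ_1^2$; you should actually carry it out to confirm the numerical constant rather than simply asserting agreement with the lemma's statement.
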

\begin{proof} 
Arguing as in the proof of Lemma \ref{pupvpw}, one gets
\begin{equation}\label{pz}
P\widetilde Z(x)=\widetilde Z(x)-8k\pi\beta^k\partial_{x_1}H(0,0)+O\left(\beta^k\left|x^k-\rho^k\right|+\beta^{3k}+\beta^k\rho^k\right),
\end{equation}
therefore
\begin{eqnarray*}
\int_{\Omega} \left|\nabla P\widetilde Z\right|^2&=&\int_\Omega|x|^{2k-2}e^{U_2(x)}\widetilde Z(x)\mathrm P\widetilde Z(x)\mathrm dx\\
&=&\int_\Omega|x|^{2k-2}e^{U_2(x)}\widetilde Z(x)\left(\widetilde Z(x)-8\pi k\beta^k\partial_{x_j}H(0,0)\right.\\
&&\left.+O\left(\beta^k\left|x^k-\rho^k\right|+\beta^{3k}+\beta^k\rho^k\right)\right)\,dx\\
&=&\int_{\frac{\Omega^k-\rho^k}{\beta^k}}ke^{U(y)}Z_1(y)\left(Z_1(y)-8k\pi\beta^k\partial_{x_1}H(0,0)\right.\\
&&\left.+O\left(\beta^{2k}|y|+\beta^{3k}+\beta^k\rho^k\right)\right)\,dy\\
&=&k\int_{\mathbb R^2}e^UZ_1^2-8\pi k^2\beta\partial_{x_1}H(0,0)\int_{\mathbb R^2}e^UZ_1+O\left(\frac{\beta^{3k}}{\rho^{3k}}+\beta^k\rho^k\right)\\
&=&\frac83k\pi+O\left(e^{-\varepsilon p}\right),
\end{eqnarray*}
where $Z_1$ is the function defined in \eqref{def:Z_i}.
\end{proof}\
 
\begin{proof}[Proof of Proposition \ref{linear}]\ Let us first prove that the \emph{a priori} estimate \eqref{estimphi} for any solution implies existence and uniqueness of solutions $\phi$ to \eqref{eqphih}, where $\phi$ satisfies the orthogonality condition \eqref{orto} and $\mathfrak c$ is as in \eqref{expressionC}.\\
Let us split $H^1_{0,k}(\Omega)$ into the direct sum of its subspaces
$$\mathbf Z:= \left\{c\mathrm P\widetilde Z:\,c\in\mathbb R\right\}\ \hbox{and}\
\mathbf Z^\perp:=\left\{\phi\in H^1_{0,k}(\Omega):\,\int_\Omega|x|^{2k-2}e^{U_2(x)}\widetilde Z(x)\phi(x)\mathrm dx=0\right\},
$$
with the respective projections denoted by $\Pi:H^1_{0,k}(\Omega)\to\mathbf Z$ and $\Pi^\perp:H^1_{0,k}(\Omega)\to\mathbf Z^\perp$.\\
Problem \eqref{eqphih} can be equivalently written as
\begin{equation}\label{eqfredholm}
(\mathrm{Id}-\mathcal K)\phi=\widetilde h,\ \hbox{ with }\ 
 \mathcal K\phi:=\Pi^\perp(-\Delta)^{-1}\left(p|\Upsilon|^{p-1}\phi\right)\ \hbox{ and }\ \widetilde h:=-\Pi^\perp(-\Delta)^{-1}h.\end{equation}
Here $(-\Delta)^{-1}:L^2_k(\Omega)\to H^1_{0,k}(\Omega)$ is the inverse of the Dirichlet Laplacian and is a compact operator, $\Pi^\perp$ is continuous and $p|\Upsilon|^{p-1}\in L^\infty(\Omega)$ (at fixed $p$), hence $\mathcal K$ is also compact on $L^2_k(\Omega)$, and from \eqref{estimphi} it is injective. Therefore, from Fredholm's alternative, \eqref{eqfredholm} has a unique solution $\phi\in\mathbf Z^\perp$ for any $\widetilde h\in\mathbf Z^\perp$; this is true in particular if $\widetilde h:=-\Pi^\perp(-\Delta)^{-1}h$ for some $h\in L^\infty_k(\Omega)$ and, by elliptic regularity $\phi\in H^1_{0,k}(\Omega)\cap W^{2,2}(\Omega)\subset H^1_{0,k}(\Omega)\cap L^\infty_k(\Omega)$.
Moreover, $\widetilde h\in\mathbf Z^\perp$ if and only if $\mathfrak c=-\frac{\int_{\Omega}\nabla h \cdot \nabla P\widetilde Z}{\int_{\Omega} \left|\nabla P\widetilde Z\right|^2}$. \\
In order to conclude the proof we show that the estimate \eqref{estimphi} holds true.\\
By Lemma \ref{orthog2} and \eqref{normbubble} 
\begin{eqnarray}
\nonumber\|\phi\|_\infty&\le&Cp\left(\|h\|_*+\left\|\mathfrak c|x|^{2k-2}e^{U_2(x)}\widetilde Z(x)\right\|_*\right)\\
\nonumber&\le&Cp\left(\|h\|_*+|\mathfrak c|\left\|\widetilde Z\right\|_\infty\left\||x|^{2k-2}e^{U_2(x)}\right\|_*\right)\\
\label{phiphc}&\le&Cp(\|h\|_*+|\mathfrak c|),
\end{eqnarray}
therefore it suffices to show that $\mathfrak c=O(\|h\|_*)$.
By linearity, this is equivalent to proving that
\[\lambda_p\leq C\|h\|_* \qquad\mbox{for any }|\mathfrak c|=\lambda_p\]
where $\lambda_p>0$ is fixed. We argue by contradiction, assuming that 
there are solutions to \eqref{eqphih} for 
\begin{equation}\label{hpAssurdoc}\|h\|_*=o(\lambda_p).\end{equation}
For our convenience let us choose $\lambda_p=\frac{1}{p}$, so that $\|\phi\|_\infty$ is bounded by \eqref{phiphc}.
%
%
Multiplying equation \eqref{eqphih} by $\mathrm P\widetilde Z$, and integrating by parts, we get
\begin{equation}\label{intphipz}
\int_\Omega p|\Upsilon|^{p-1}\phi\mathrm P\widetilde Z-\int_\Omega|x|^{2k-2}e^{U_\beta(x)}\phi(x)\mathrm P\widetilde Z(x)\mathrm dx=\int_\Omega h\mathrm P\widetilde Z+\frac{1}{p}\int_{\Omega} \left|\nabla P\widetilde Z\right|^2.
\end{equation}
From Lemma \ref{derivestim} and \eqref{pz}, on the left-hand side of \eqref{intphipz}, we deduce
\begin{eqnarray}
\nonumber&&\int_\Omega p|\Upsilon|^{p-1}\phi\mathrm P\widetilde Z-\int_\Omega|x|^{2k-2}e^{U_\beta(x)}\phi(x)\mathrm P\widetilde Z(x)\mathrm dx\\
\nonumber&=&\int_{\left\{\left|x^k-\rho^k\right|\le\beta^{k(1-\theta)}\right\}}\left(p|\Upsilon(x)|^{p-1}-|x|^{2k-2}e^{U_\beta(x)}\right)\phi(x)\mathrm P\widetilde Z(x)\mathrm dx\\
\nonumber&&+\int_\Omega|x|^{2k-2}e^{U_\beta(x)}\phi(x)\left(\mathrm P\widetilde Z(x)-\widetilde Z(x)\right)\mathrm dx\\
\nonumber&&+O\left(\|\phi\|_\infty\left\|\mathrm P\widetilde Z\right\|_\infty\int_{\left\{\left|x^k-\rho^k\right|>\beta^{k(1-\theta)}\right\}}|x|^{2k-2}e^{U_\beta(x)}\mathrm dx\right)\\
\label{intzphi}&=&\frac1p\int_{\left\{|y|\le\frac1{\beta^{k(1-\theta)}}\right\}}e^{U(y)}\left(\left(V(y)-U(y)-\frac{U(y)^2}2\right)+O\left(\frac{\log^4(|y|+2)}{p^2}\right)\right)Z_1(y)\phi_1(y)\mathrm dy\\
\nonumber&&+ O\left(\|\phi\|_\infty\beta^k\int_\Omega|x|^{2k-2}e^{U_\beta(x)}\mathrm dx+\|\phi\|_\infty\int_{\left\{|y|\le\frac1{\beta^{k(1-\theta)}}\right\}}e^U\mathrm dx\right),
\end{eqnarray}
where $Z_1$ is the function defined in \eqref{def:Z_i}.\\
Arguing similarly as in the proof of Lemma \ref{orthog2}, we can prove that the rescalement $\phi_1(y):=\phi(\alpha y)$ converges to $\mathcal CZ_0$, for some $\mathcal C\in\mathbb R$, where $Z_0$ is the function defined in \eqref{def:Z_i}. Therefore, in \eqref{intzphi}, we get
\begin{eqnarray*}
&&\int_{\left\{|y|\le\frac1{\beta^{k(1-\theta)}}\right\}}e^{U(y)}\left(\left(V(y)-U(y)-\frac{U(y)^2}2\right)+O\left(\frac{\log^4(|y|+2)}{p^2}\right)\right)Z_1(y)\phi_1(y)\mathrm dy\\
&\to&\mathcal C\int_{\mathbb R^2}e^U\left(V-U-\frac{U^2}2\right)Z_1Z_0=0,
\end{eqnarray*}
since we are integrating an odd function. Therefore, for the left-hand side of \eqref{intphipz} we get
$$\int_\Omega p|\Upsilon|^{p-1}\phi\mathrm P\widetilde Z-\int_\Omega|x|^{2k-2}e^{U_\beta(x)}\phi(x)\mathrm P\widetilde Z(x)\mathrm dx=o\left(\frac1p\right).$$
In the right-hand side of \eqref{intphipz}, using \eqref{normint}, we get 
$$\left|\int_\Omega h\mathrm P\widetilde Z\right|\le C\int_\Omega|h|\le C\|h\|_*=o\left(\frac{1}{p}\right).
$$
Hence, in view of Lemma \ref{intpz}, substituting into \eqref{intphipz}, we obtain the following contradiction
$$o\left(\frac1p\right)=\frac1p\frac83k\pi+o\left(\frac1p\right),$$
which completes the proof.
\end{proof}

\section{Proof of Theorem \ref{teop}}\label{5}

This section is committed to the last step of the Ljapunov-Schmidt reduction process. 
First, we solve a nonlinear problem via a contraction mapping argument, whose solution depends on the free parameter $\eta$ (see Proposition \ref{eqphic} below). Then, we introduce the reduced energy, and find the parameter $\eta$ minimizing it (see \eqref{def:ridotta} and Proposition \ref{reduction}). The minimum produces the solutions of our problem with the required properties (see Lemma \ref{Lemma:c=0AtCriticalPointsofF} and Subsection \ref{subsection:fine}, where the proof of Theorem \ref{teop} is completed). 

\subsection{The nonlinear problem}\label{subsection:contraction}
First of all, we solve the problem 
\begin{equation}\label{eqphic}
\left\{\begin{array}{ll}\mathscr L\phi(x)+\mathscr E(x)+\mathscr N(\phi)(x)=\mathfrak c|x|^{2k-2}e^{U_2(x)}\widetilde Z(x)&x\in\Omega\\\phi(x)=0&x\in\partial\Omega\\\int_\Omega|x|^{2k-2}e^{U_2(x)}\widetilde Z(x)\phi(x)\mathrm dx=0\end{array}\right.
\end{equation}

\begin{proposition}\label{nonlinear} 
Let $\mathscr L$, $\mathscr E$, $\mathscr N$ be the ones in \eqref{elle}, \eqref{erre} and \eqref{enne}, and let us assume that, in the definition \eqref{upsp} of $\Upsilon$, the parameters $\alpha, \beta, \rho, \tau,\eta$ satisfy \eqref{param}. 
Then there exists $C>0$ such that, for any $\eta$ satisfying \eqref{eta}, problem \eqref{eqphic}
has a unique solution $ (\phi, \mathfrak c)=\left(\phi_p(\eta), \mathfrak c_p(\eta)\right)\in H^1_{0,k}(\Omega)\cap L^\infty_k(\Omega)\times \mathbb R $, such that
\begin{equation}\label{phic}\|\phi_p(\eta)\|_{H^1_0}+\|\phi_p(\eta)\|_\infty\le\frac C{p^3}\quad \hbox{ and }\quad |\mathfrak c_p(\eta)|\le\frac C{p^4}.
\end{equation}
Moreover, the map $\eta\mapsto\phi_p(\eta)$ is of class $C^1$.
\end{proposition}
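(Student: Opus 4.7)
The plan is to reformulate \eqref{eqphic} as a fixed-point equation and invoke the Banach contraction principle, leveraging the linear theory of Section \ref{4} and the error bound of Section \ref{3}. Denote by $T(h) = (T_1(h), T_2(h))$ the linear solution operator from Proposition \ref{linear}. Then solving \eqref{eqphic} is equivalent to
\[\phi = A(\phi) := T_1\bigl(-\mathscr E - \mathscr N(\phi)\bigr), \qquad \mathfrak c = T_2\bigl(-\mathscr E - \mathscr N(\phi)\bigr),\]
and I will show that $A$ is a contraction of the closed ball $B_r := \{\phi \in \mathbf Z^\perp \cap L^\infty_k(\Omega) : \|\phi\|_\infty \le r\}$ with $r := C_0/p^3$ for a suitable constant $C_0>0$.

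The analytic core is the control of $\mathscr N$ in the weighted norm \eqref{norm}. Taylor's theorem applied to $s \mapsto |s|^{p-1}s$ gives the pointwise bound
\[|\mathscr N(\phi)(x)| \le C p^2 \bigl(|\Upsilon(x)| + |\phi(x)|\bigr)^{p-2} \phi(x)^2\]
together with an analogous Lipschitz-type estimate for $\mathscr N(\phi_1) - \mathscr N(\phi_2)$. On the bulk of the support of the weight, namely neighbourhoods of the peaks where $|\Upsilon|\sim p\tau\sim\sqrt e$, I rewrite $p^2|\Upsilon|^{p-2} = (p|\Upsilon|^{p-1}/|\Upsilon|)\cdot p$ and invoke \eqref{deriv} of Lemma \ref{derivestim} to replace it by $C p(e^{U_1} + |x|^{2k-2}e^{U_2})$; near the nodal set of $\Upsilon$ the alternative crude bound $|\mathscr N(\phi)|\le C|\phi|^p$ applies and is super-polynomially small since $\|\phi\|_\infty^{p-2} = o(p^{-N})$ for every $N$. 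Dividing by the weight and using \eqref{normbubble} yields
\[\|\mathscr N(\phi)\|_* \le C p \|\phi\|_\infty^2, \qquad \|\mathscr N(\phi_1) - \mathscr N(\phi_2)\|_* \le C p \max_j\|\phi_j\|_\infty \cdot \|\phi_1-\phi_2\|_\infty.\]

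Combining these with Propositions \ref{r} and \ref{linear}, for $\phi \in B_r$ one obtains
\[\|A(\phi)\|_\infty \le C p \bigl(\|\mathscr E\|_* + \|\mathscr N(\phi)\|_*\bigr) \le \frac{C}{p^3} + \frac{C C_0^2}{p^5} \le \frac{C_0}{p^3}\]
for $C_0$ and $p$ large, and analogously $A$ is a $(CC_0/p)$-contraction on $B_r$. Banach's theorem then produces a unique fixed point $\phi_p(\eta)$, and Remark \ref{normphi} supplies the matching $H^1_0$-bound. For $\mathfrak c_p(\eta)$, I note that the proof of Proposition \ref{linear} actually establishes the sharper bound $|T_2(h)| \le C\|h\|_*$, without the factor of $p$ (the contradiction argument there is tailored precisely to prove $|\mathfrak c| \le C\|h\|_*$), whence $|\mathfrak c_p(\eta)| \le C(\|\mathscr E\|_* + \|\mathscr N(\phi_p)\|_*) \le C/p^4$. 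The $C^1$ dependence of $\phi_p$ on $\eta$ follows from the implicit function theorem applied to $F(\phi,\eta) := \phi - A(\phi,\eta)$: the linearization $\mathrm{Id} - D_\phi A$ is a uniform isomorphism by the contraction estimate, while smoothness in $\eta$ is inherited from the $C^1$ dependence of the parameters $\alpha,\beta,\rho,\tau$ on $\eta$ guaranteed by Lemma \ref{asymptotic}, combined with the uniformity over $\eta$ in the range \eqref{eta} of all preceding estimates. I expect the main technical subtleties to be the combination of the two pointwise bounds on $|\mathscr N|$ across the nodal set of $\Upsilon$, and the extraction of the sharper power of $p$ in the $\mathfrak c$-bound, which requires more than a black-box application of Proposition \ref{linear}.
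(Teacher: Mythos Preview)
Your proposal is correct and follows essentially the same route as the paper: fixed-point reformulation via the linear solution operator of Proposition \ref{linear}, contraction on a ball of radius $O(p^{-3})$ using Proposition \ref{r} and the estimate $\|\mathscr N(\phi)\|_*\le Cp\|\phi\|_\infty^2$ (plus a super-polynomially small tail), the $H^1_0$ bound via Remark \ref{normphi}, the sharper $|\mathfrak c|\le C\|h\|_*$ extracted from the proof of Proposition \ref{linear}, and $C^1$ dependence via the implicit function theorem.

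The one noteworthy difference is how the nonlinear term is controlled. You split into a region near the peaks (where $|\Upsilon|\ge c$) and its complement; the paper instead proves a one-line convexity lemma (Lemma \ref{derivsec}) giving $p|\Upsilon+\phi|^{p-2}\le C\bigl(e^{U_1}+|x|^{2k-2}e^{U_2}\bigr)+(p\|\phi\|_\infty)^{p-2}$ uniformly on $\Omega$, which avoids any case distinction. Your stated ``crude bound $|\mathscr N(\phi)|\le C|\phi|^p$'' near the nodal set is not literally correct: where $|\Upsilon|$ is small but nonzero the Taylor remainder is $\sim p^2|\Upsilon|^{p-2}\phi^2$, not $|\phi|^p$. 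The conclusion survives because on that region $|\Upsilon|<1$ and hence $p^2|\Upsilon|^{p-2}$ is itself super-polynomially small, but the paper's convexity trick is the cleaner way to package this.
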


Before proving the result, we need an estimate of the second derivative of the nonlinear term $\mathscr N(\phi)=|\Upsilon+\phi|^{p-1}(\Upsilon+\phi)-|\Upsilon|^{p-1}\Upsilon-p|\Upsilon|^{p-1}\phi$. This is contained below.
\begin{lemma}\label{derivsec}
There exists $C>0$ such that, uniformly in $\Omega$,
\begin{eqnarray*}
p|\Upsilon(x)+\phi(x)|^{p-2}&\le&C\left(e^{U_1(x)}+|x|^{2k-2}e^{U_2(x)}\right)+(p\|\phi\|_\infty)^{p-2}.
\end{eqnarray*}
\end{lemma}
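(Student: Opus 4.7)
The plan is to derive the bound in two steps: first a pointwise estimate for $p|\Upsilon(x)|^{p-2}$, then its extension to $p|\Upsilon(x)+\phi(x)|^{p-2}$ via a case analysis based on the ratio $|\phi|/|\Upsilon|$.

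First I would establish the auxiliary inequality
\[
p|\Upsilon(x)|^{p-2}\le C\bigl(e^{U_1(x)}+|x|^{2k-2}e^{U_2(x)}\bigr)\qquad\text{for all }x\in\Omega,
\]
uniformly in $p$ large. This is the analogue of \eqref{deriv} with the exponent $p-1$ replaced by $p-2$, and its proof runs along the same three-region scheme as Lemma \ref{derivestim} (namely $|x|\le\rho/2$, $\rho/2<|x|\le2\rho$, and $|x|>2\rho$). The key algebraic ingredients are the identities $(p\tau)^{p-1}=1/(p\alpha^2)$ and $(p\tau\eta)^{p-1}=k^2\rho^{2k-2}/(p\beta^{2k})$ extracted from \eqref{param}, together with $p\tau\sim\sqrt e$ from Lemma \ref{asymptotic}, which jointly give $p(p\tau)^{p-2}\sim e^{U_1(0)}/\sqrt e$ in the innermost region and analogously $p(p\tau\eta)^{p-2}\sim |x|^{2k-2}e^{U_2(z_j)}/(k^2\sqrt e)$ close to each negative peak $z_j$. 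The elementary inequality $(1+y/p)^{p-2}\le e^y$ then propagates this control throughout the rescaled variables exactly as in the derivation of \eqref{deriv1} and \eqref{deriv2}.

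Given this auxiliary bound, I split $\Omega$ into $A=\{x\in\Omega:|\phi(x)|\le|\Upsilon(x)|/p\}$ and $B=\Omega\setminus A$. On $A$, the inequality $|\Upsilon+\phi|\le|\Upsilon|(1+1/p)$ yields $|\Upsilon+\phi|^{p-2}\le(1+1/p)^{p-2}|\Upsilon|^{p-2}\le e|\Upsilon|^{p-2}$, whence $p|\Upsilon+\phi|^{p-2}\le eC(e^{U_1}+|x|^{2k-2}e^{U_2})$ by the auxiliary bound. On $B$, $|\Upsilon(x)|<p\|\phi\|_\infty$, so $|\Upsilon+\phi|\le(p+1)\|\phi\|_\infty$ and $p|\Upsilon+\phi|^{p-2}\le p\bigl((p+1)\|\phi\|_\infty\bigr)^{p-2}\le C(p\|\phi\|_\infty)^{p-2}$ after absorbing the factor $(1+1/p)^{p-2}\le e$ into the constant. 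Combining the two cases gives the claimed inequality.

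The main obstacle lies in the auxiliary estimate, specifically the intermediate zones between the positive peak at the origin and the negative peaks at $\rho e^{\mathrm i 2\pi j/k}$. Near each peak $|\Upsilon|$ is bounded below by a positive constant of order $\sqrt e$, so one can simply divide \eqref{deriv} by $|\Upsilon|$; but along the nodal set of $\Upsilon$, which must sit somewhere between the peaks because $u$ is sign-changing, $|\Upsilon|$ vanishes while $e^{U_1}+|x|^{2k-2}e^{U_2}$ is itself exponentially small in $p$. To make the comparison rigorous, one uses the exponential rates $\alpha\sim e^{-a_kp}$, $\beta\sim e^{-b_kp}$, $\rho\sim e^{-r_kp}$ from Lemma \ref{asymptotic} and the inequalities of Lemma \ref{alphabetarho} (in particular $\alpha/\rho,\beta/\rho\le e^{-\varepsilon p}$) to certify that $|\Upsilon|^{p-2}$ decays at least as fast as the right-hand side throughout the intermediate region.
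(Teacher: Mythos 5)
Your first step --- establishing the auxiliary bound $p|\Upsilon|^{p-2}\le C\bigl(e^{U_1(x)}+|x|^{2k-2}e^{U_2(x)}\bigr)$ by the same three-region argument as for \eqref{deriv} --- matches the paper, which simply invokes ``arguing as for Lemma \ref{deriv}''. The gap is in the dichotomy. On $B=\{|\phi|>|\Upsilon|/p\}$ you bound $|\Upsilon+\phi|\le(p+1)\|\phi\|_\infty$ and conclude $p|\Upsilon+\phi|^{p-2}\le p\bigl((p+1)\|\phi\|_\infty\bigr)^{p-2}\le C(p\|\phi\|_\infty)^{p-2}$, saying that only the factor $(1+1/p)^{p-2}\le e$ needs absorbing. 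In fact
$$p\bigl((p+1)\|\phi\|_\infty\bigr)^{p-2}=p\,(1+1/p)^{p-2}\,(p\|\phi\|_\infty)^{p-2},$$
so the prefactor is $p(1+1/p)^{p-2}\sim ep$, which grows linearly in $p$ and cannot be absorbed into a $p$-independent constant. Your threshold $|\phi|\le|\Upsilon|/p$ is precisely the borderline: replacing it by $A_c=\{|\phi|\le c|\Upsilon|/p\}$ with any fixed $c>1$ still gives $(1+c/p)^{p-2}\le e^c$ on $A_c$, while on $B_c$ one gets $p\bigl((p/c+1)\|\phi\|_\infty\bigr)^{p-2}\le p\,c^{-(p-2)}e^c\,(p\|\phi\|_\infty)^{p-2}$, where the geometric factor $c^{-(p-2)}$ now kills the stray $p$. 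With $c=1$ there is no geometric decay and the extra factor of $p$ survives.

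The paper avoids the dichotomy entirely by a single weighted convexity estimate, $|a+b|^{p-2}\le\lambda^{-(p-3)}|a|^{p-2}+(1-\lambda)^{-(p-3)}|b|^{p-2}$ for $\lambda\in(0,1)$, applied with $\lambda=1-1/p$ and $(a,b)=(\Upsilon,\phi)$. Multiplying by $p$ yields simultaneously $ep|\Upsilon|^{p-2}$ (handled by the auxiliary bound) and $p^{p-2}|\phi|^{p-2}\le(p\|\phi\|_\infty)^{p-2}$, with no stray factor of $p$. So your route diverges from the paper's only in this step, and it closes once the threshold is pushed strictly past $1/p$ (or the dichotomy is replaced by the convexity inequality).
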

\begin{proof} 
Arguing as for Lemma \ref{deriv}, we get
$$p|\Upsilon(x)|^{p-2}\le C\left(e^{U_1(x)}+|x|^{2k-2}e^{U_2(x)}\right);$$
next, from the convexity of $t\mapsto|t|^{p-2}$ we get
$$|a+b|^{p-2}\le\frac1{\lambda^{p-3}}|a|^{p-2}+\frac1{(1-\lambda)^{p-3}}|b|^{p-2},\qquad\forall\,a,b\in\mathbb R,\lambda\in(0,1),$$
therefore taking $\lambda=1-\frac1p$ one gets:
\begin{eqnarray*}
p|\Upsilon(x)+\phi(x)|^{p-2}&\le&p\left(\frac1{\left(1-\frac1p\right)^{p-3}}|\Upsilon(x)|^{p-2}+p^{p-3}|\phi(x)|^{p-2}\right)\\
&\le&Cp|\Upsilon(x)|^{p-2}+p^{p-2}|\phi(x)|^{p-2}\\
&\le&C\left(e^{U_1(x)}+|x|^{2k-2}e^{U_2(x)}\right)+(p\|\phi\|_\infty)^{p-2}.
\end{eqnarray*}
\end{proof}

\begin{proof}[Proof of Proposition \ref{nonlinear}]
From Proposition \ref{linear}, one may consider the linear map $\mathcal T:(L^\infty_{k}(\Omega),\|\cdot\|_{*})\rightarrow L^\infty(\Omega)$, defined as 
$h\mapsto\mathcal Th:=\phi$, where $\phi$ is the solution to \eqref{eqphih}. We stress that we endow the space $L^\infty_k(\Omega)$ with the weighted norm $\|\cdot\|_*$ introduced in \eqref{norm}; moreover, by \eqref{estimphi}, one has that \begin{equation}\label{stimaNelCasoNonlinear}\|\mathcal Th\|_\infty\le Cp\|h\|_*,\end{equation} for some $C>0$.\\
In terms of the operator $\mathcal T$, problem \eqref{eqphic} is equivalent to the nonlinear equation
\begin{equation}\label{eqphia}
\phi=\mathcal A(\phi):=-\mathcal T(\mathscr E+\mathscr N(\phi)).
\end{equation}
Next we show that $\mathcal A: (L^\infty_k(\Omega), \|\cdot\|_{\infty})\rightarrow (L^\infty_k(\Omega), \|\cdot\|_{\infty})$
 is a contraction on a ball of radius $\frac{M}{p^3}$, for a suitable constant $M$.\\
By the Lagrange's theorem, we have the following estimates for the nonlinear term $\mathscr N(\phi)$:
\begin{eqnarray*}
|\mathscr N(\phi)|&\le&p(p-1)|\Upsilon+O(|\phi|)|^{p-2}\phi^2,\\
|\mathscr N(\phi_1)-\mathscr N(\phi_2)|&\le&p(p-1)|\Upsilon+O(|\phi_1|+|\phi_2|)|^{p-2}(|\phi_1|+|\phi_2|)|\phi_1-\phi_2|.
\end{eqnarray*}
Furthermore, from Lemma \ref{derivsec} and \eqref{normbubble}, we get
\begin{eqnarray}\label{laPrima}
\left\|p(p-1)|\Upsilon+O(|\phi|)|^{p-2}\right\|_*&\le& Cp \left(1+ p^{p-2}\|\phi\|_{\infty}^{p-2}\right),\\\label{laSeconda}
\left\|p(p-1)|\Upsilon+O(|\phi_1|+|\phi_2|)|^{p-2}\right\|_*&\le&  Cp\left(1+ p^{p-2}(\|\phi_1\|_{\infty}+\|\phi_2\|_\infty)^{p-2}\right).
\end{eqnarray}
Therefore
\begin{eqnarray*}
\|\mathscr N(\phi)\|_*&\le&Cp\|\phi\|_\infty^2 \left(1+  p^{p-2}\|\phi\|_{\infty}^{p-2}\right)\\
\|\mathscr N(\phi_1)-\mathscr N(\phi_2)\|_*&\le&Cp(\|\phi_1\|_\infty+\|\phi_2\|_\infty)\|\phi_1-\phi_2\|_\infty \left(1+p^{p-2}(\|\phi_1\|_\infty+\|\phi_2\|_\infty)^{p-2}\right).
\end{eqnarray*}
Putting together the definition of $\mathcal A$ in \eqref{eqphia}, \eqref{stimaNelCasoNonlinear}, Proposition \ref{r} and \eqref{laPrima}, we get, for $\|\phi\|_\infty\le\frac M{p^3}$:
$$\|\mathcal A(\phi)\|_\infty\le Cp(\|\mathscr E\|_*+\|\mathscr N(\phi)\|_*)\le\frac{C^2}{p^3}+\frac{C^2M^2}{p^4}+   \frac{C^2M^p}{p^{2p}}  ,$$
which is smaller than $\frac M{p^3}$, if $M$ is strictly larger than $C^2$ and $p$ is large enough.\\
Similarly, if $\|\phi_i\|_\infty\le\frac M{p^3}$ for $i=1,2$, then from \eqref{laSeconda}:
\begin{eqnarray*}
\|\mathcal A(\phi_1)-\mathcal A(\phi_2)\|_\infty&\le&Cp\|\mathscr N(\phi_1)-\mathscr N(\phi_2)\|\\&\le&\frac{2C^2M}p\|\phi_1-\phi_2\|_\infty+\frac{C^2(2M)^{p-1}}{p^{2p-3}} \|\phi_1-\phi_2\|_\infty\\
&\le&\frac{\|\phi_2-\phi_2\|_\infty}2,
\end{eqnarray*}
for $p$ large enough.
Therefore, $\mathcal A$ is a contraction on a ball of radius $\frac M{p^3}$.\\
As a consequance, $\mathcal A$ has a fixed point $\phi\in L^\infty_k(\Omega)$,  which solves \eqref{eqphia}, hence \eqref{eqphic} and such that $\|\phi\|_{\infty}\leq M/p^3$.\\
Furthermore, as in the proof of Proposition \ref{linear}, we get
$$|\mathfrak c|\le C\left(\|h\|_*+\frac{\|\phi\|_\infty}p\right),$$
so that
$$|\mathfrak c|\le C\left(\|\mathscr E\|_*+\|\mathscr N(\phi)\|_*+\frac{\|\phi\|_\infty}p\right)\le\frac C{p^4}$$
and, from Remark \ref{normphi}, we get
$$\|\phi\|_{H^1_0}\le C\left(\|\phi\|_\infty+\|\mathscr E\|_*+\|\mathscr N(\phi)\|_*\right)\le\frac C{p^3}.$$
The regularity of $\eta\mapsto\phi$ follows by a standard application of the implicit function theorem (see for instance \cite[Lemma 4.1]{EspositoMussoPistoiaJDE2006}).
\end{proof}

\subsection{The reduced energy}\label{subsection:reduced}
Let us introduce the functional 
$$J_p(u):=\int_\Omega\left(\frac12|\nabla u|^2-\frac1{p+1}|u|^{p+1}\right),\ u\in H^1_0(\Omega)$$
whose critical points are solutions to \eqref{eqp}. Let  $\Upsilon$ be as in \eqref{upsp}, where the parameters $\alpha, \beta, \rho, \tau$ are chosen as in  \eqref{param}. If the free parameter $\eta$ satisfies \eqref{eta}, then Proposition \ref{nonlinear} holds, hence the solution $(\phi_p(\eta),\mathfrak c_p(\eta))$ of the nonlinear problem \eqref{eqphic} is well defined. Then we set the \emph{reduced energy} as
\begin{equation}\label{def:ridotta}F_p(\eta):=J_p\left(\Upsilon+\phi_p(\eta)\right).\end{equation}
Our aim is to find $\eta=\eta(p)$ such that  $ \mathfrak c_p(\eta)=0.$
This is equivalent to find a critical point of the reduced energy, as stated in the following Lemma.

\begin{lemma} \label{Lemma:c=0AtCriticalPointsofF}
The functional $\eta\mapsto F_p(\eta)$ is of class $C^1$ and, for $p$ large enough, if $F_p'(\eta)=0$ then $\mathfrak c_p(\eta)=0$.
\end{lemma}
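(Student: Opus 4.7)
The $C^1$ regularity of $F_p$ is immediate: by Proposition \ref{nonlinear} the map $\eta\mapsto\phi_p(\eta)$ is $C^1$ with values in $H^1_{0,k}(\Omega)\cap L^\infty_k(\Omega)$, the parameters $\alpha,\beta,\rho,\tau$ also depend smoothly on $\eta$ by Lemma \ref{asymptotic}, so $\eta\mapsto\Upsilon+\phi_p(\eta)\in H^1_{0}(\Omega)$ is $C^1$; composing with the smooth functional $J_p$ gives $F_p\in C^1$.

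For the implication $F_p'(\eta)=0\Rightarrow\mathfrak c_p(\eta)=0$, the strategy is the usual Ljapunov--Schmidt variational identity. Set $u_\eta:=\Upsilon+\phi_p(\eta)$. Equation \eqref{eqphic} can be rewritten as
\[-\Delta u_\eta=|u_\eta|^{p-1}u_\eta-\mathfrak c_p(\eta)|x|^{2k-2}e^{U_2(x)}\widetilde Z(x)\qquad\text{in }\Omega,\]
so integration by parts yields, for every $v\in H^1_0(\Omega)$,
\[J_p'(u_\eta)[v]=-\mathfrak c_p(\eta)\int_\Omega|x|^{2k-2}e^{U_2(x)}\widetilde Z(x)v(x)\mathrm dx.\]
Plugging $v=\partial_\eta u_\eta=\partial_\eta\Upsilon+\partial_\eta\phi_p(\eta)$ and differentiating the orthogonality condition $\int_\Omega|x|^{2k-2}e^{U_2}\widetilde Z\,\phi_p=0$ in $\eta$, one obtains
\[F_p'(\eta)=-\mathfrak c_p(\eta)\Bigl[\,\underbrace{\int_\Omega|x|^{2k-2}e^{U_2}\widetilde Z\,\partial_\eta\Upsilon}_{=:\,\mathcal I_1(p,\eta)}\;-\;\underbrace{\int_\Omega\partial_\eta\bigl(|x|^{2k-2}e^{U_2}\widetilde Z\bigr)\,\phi_p(\eta)}_{=:\,\mathcal I_2(p,\eta)}\Bigr].\]
It then suffices to prove that $\mathcal I_1(p,\eta)-\mathcal I_2(p,\eta)\neq 0$ for all $p$ large enough and all $\eta$ in the range \eqref{eta}.

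The key observation for estimating $\mathcal I_1$ is that $\widetilde Z$ is precisely the translation kernel of the linearized operator at $U_2$ in the $x_1$-direction: direct differentiation of $U_2(x)=U\!\bigl((x^k-\rho^k)/\beta^k\bigr)+\log(k^2)-2k\log\beta$ gives the crucial identity
\[\partial_\rho U_2(x)=\frac{k\rho^{k-1}}{\beta^k}\,\widetilde Z(x),\]
so $\partial_\rho\mathrm PU_2=\frac{k\rho^{k-1}}{\beta^k}\mathrm P\widetilde Z+(\text{smooth harmonic corrections})$. Among the five partial derivatives entering $\partial_\eta\Upsilon$ through the chain rule, only the $\partial_\rho(-\tau\eta\mathrm PU_2)\cdot\partial_\eta\rho$ contribution pairs non-trivially at leading order with the concentrated weight $|x|^{2k-2}e^{U_2}\widetilde Z$; combined with Lemma \ref{asymptotic} (which yields $\partial_\eta\rho/\rho\sim c_k\, p$ with an explicit nonzero $c_k$) and Lemma \ref{intpz} (giving $\int|x|^{2k-2}e^{U_2}\widetilde Z\,\mathrm P\widetilde Z=\int|\nabla\mathrm P\widetilde Z|^2=\tfrac{8}{3}k\pi+O(e^{-\varepsilon p})$), this produces a leading term of the form $\mathcal I_1(p,\eta)\sim C_k\,p\,\tau\,\eta\,\frac{k\rho^k}{\beta^k}$ with a definite sign. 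All the remaining chain-rule terms, after being tested against $|x|^{2k-2}e^{U_2}\widetilde Z$, lose one full factor of $p$ via the same kind of weighted integral estimates developed in Sections \ref{3}--\ref{4}, and the kernel-mode integrals vanish by the parity/orthogonality relations \eqref{ortz} and the fact that $\widetilde Z$ is odd in $\Im(x^k)$. For $\mathcal I_2$ the bound $\|\phi_p\|_\infty\le C/p^3$ from Proposition \ref{nonlinear}, together with $\|\partial_\eta(|x|^{2k-2}e^{U_2}\widetilde Z)\|_{L^1}=O(p)$ coming again from Lemma \ref{asymptotic}, gives $|\mathcal I_2|=O(1/p^2)$, so it is strictly of lower order than $\mathcal I_1$. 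Hence the bracket does not vanish for $p$ large, and $F_p'(\eta)=0$ forces $\mathfrak c_p(\eta)=0$.

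The main obstacle is the precise identification of the leading order of $\mathcal I_1$: one must track how $\partial_\eta$ acts through the four implicitly defined parameters $\alpha,\beta,\rho,\tau$ and check that the contributions of $\partial_\eta\alpha\,\partial_\alpha\Upsilon$, $\partial_\eta\beta\,\partial_\beta\Upsilon$ and $\partial_\eta\tau\,\partial_\tau\Upsilon$, once paired with the weight $|x|^{2k-2}e^{U_2}\widetilde Z$, are either $o(\mathcal I_1)$ or cancel by parity. This uses the exponential estimates of Lemma \ref{alphabetarho} together with the explicit asymptotics of Lemma \ref{asymptotic}, in the same spirit as the error estimates of Section \ref{3}.
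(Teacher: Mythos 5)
Your strategy is exactly the paper's: differentiate $F_p$ using the equation for $u_\eta$, reduce to showing the bracket $\mathcal I_1-\mathcal I_2\neq0$, identify the $\partial_\rho\mathrm PU_2\cdot\partial_\eta\rho$ contribution (via $\partial_\rho U_2=\frac{k\rho^{k-1}}{\beta^k}\widetilde Z$) as the leading part of $\mathcal I_1$, and evaluate it with Lemma \ref{intpz}. Two small bookkeeping points are off, though neither changes the conclusion.

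First, the $L^1$ estimate. You write $\|\partial_\eta(|x|^{2k-2}e^{U_2}\widetilde Z)\|_{L^1}=O(p)$, but the $\partial_\rho$ part of the chain rule carries the same amplification factor $\partial_\eta(\rho^k)/\beta^k\sim p\rho^k/\beta^k$ that you exploited in $\mathcal I_1$, so the correct bound is $O\!\left(p\,\rho^k/\beta^k\right)$ and hence $\mathcal I_2=O\!\left(\frac{1}{p^2}\frac{\rho^k}{\beta^k}\right)$, not $O(1/p^2)$. This is what the paper obtains. Fortunately the conclusion survives, because $\mathcal I_1\sim C_k\frac{\rho^k}{\beta^k}$ carries the same exponential factor, so $\mathcal I_2/\mathcal I_1=O(1/p^2)$ and the bracket is still nonzero for $p$ large.

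Second, your explanation of why the remaining chain-rule terms are negligible is inaccurate. They do not ``lose one full factor of $p$'': after being tested against $|x|^{2k-2}e^{U_2}\widetilde Z$, the $\partial_\eta\tau$, $\partial_\eta\alpha$ and $\partial_\eta\beta$ contributions are $O(1)$, while the $\partial_\eta\rho$ contribution is $\sim\rho^k/\beta^k\ge e^{ck\varepsilon p}$ by Lemma \ref{alphabetarho} — the separation is exponential, not polynomial. Also, the parity argument you invoke does not apply: $\widetilde Z=Z_1\big((x^k-\rho^k)/\beta^k\big)$ is \emph{even} in $\Im(x^k)$, not odd (that would be $Z_{2,2}$), and the paper does not need any parity cancellation here; it simply relies on the exponential scaling.

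With those two corrections your argument coincides with the paper's proof.
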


\begin{proof} 
The smoothness of $F_p$ follows from the smoothness of $\eta\mapsto\phi_p(\eta)$, namely the last statement of Proposition \ref{nonlinear}.\\ 
If $F_p'(\eta)=0$, then by differentiating under integral sign we have
\begin{eqnarray*}
0&=&\int_\Omega(\Delta(\Upsilon+\phi)+|\Upsilon+\phi|^{p-1}(\Upsilon+\phi))(\partial_\eta\Upsilon+\partial_\eta\phi)\\
&=&-\mathfrak c(\eta)\int_\Omega|x|^{2k-2}e^{U_2(x)}\widetilde Z(x)(\partial_\eta\Upsilon(x)+\partial_\eta\phi(x))\mathrm dx\\
&=&-\mathfrak c(\eta)\int_\Omega|x|^{2k-2}e^{U_2(x)}\widetilde Z(x)\partial_\eta\Upsilon(x)\mathrm dx\\
&+&\mathfrak c(\eta)\int_\Omega|x|^{2k-2}\partial_\eta\left(e^{U_2(x)}\widetilde Z(x)\right)\phi(x)\mathrm dx,
\end{eqnarray*}
since $\int_\Omega|x|^{2k-2}e^{U_2(x)}\widetilde Z(x)\phi\equiv0$.\\
By the explicit definition \eqref{upsp} of $\Upsilon$ we get
\begin{eqnarray*}
\partial_\eta\Upsilon(x)&=&\partial_\eta\tau\left(\mathrm PU_1+\frac{\mathrm PV_1}p+\frac{\mathrm PW_1}{p^2}\right)-\left(\partial_\eta\tau\eta+\tau\right)\left(\mathrm PU_2+\frac{\mathrm PV_2}p+\frac{\mathrm PW_2}{p^2}\right)\\
&+&\tau\left(\mathrm P\nabla U\left(\frac x\alpha\right)+\frac{\mathrm P\nabla V\left(\frac x\alpha\right)}p+\frac{\mathrm P\nabla W\left(\frac x\alpha\right)}{p^2}\right)\cdot\left(-\frac x{\alpha^2}\right)\partial_\eta\alpha\\
&+&\tau\eta\left(\mathrm P\nabla U\left(\frac{x^k-\rho^k}{\beta^k}\right)+\frac{\mathrm P\nabla V\left(\frac{x^k-\rho^k}{\beta^k}\right)}p+\frac{\mathrm P\nabla W\left(\frac{x^k-\rho^k}{\beta^k}\right)}{p^2}\right)\cdot\\
&&\cdot\left(-\frac{x^k-\rho^k}{\beta^{2k}}\partial_\eta\left(\beta^k\right)-\frac{\partial_\eta\left(\rho^k\right)}{\beta^k}\right)\\
&=&O\left(\frac1p\left(\log \frac1\alpha+\log \frac1{\beta^k}\right)\right)+O\left(\frac1p\mathrm P(O(1))\left(\frac{\partial_\eta\alpha}\alpha+\frac{\partial_\eta\left(\beta^k\right)}{\beta^k}\right)\right)\\
&-&\tau\eta\mathrm P\left(-\frac{\frac{x^k-\rho^k}{\beta^k}}{\left|\frac{x^k-\rho^k}{\beta^k}\right|^2+1}\right)\left(1+O\left(\frac1p\right)\right)\cdot\frac{\partial_\eta\left(\rho^k\right)}{\beta^k}\\
&=&\frac{\tau\eta}4\mathrm P\widetilde Z(x)k\frac{\partial_\eta\rho}\rho\frac{\rho^k}{\beta^k}\left(1+O\left(\frac1p\right)\right)\\
&=&b(\eta)\mathrm P\widetilde Z(x)\frac{\rho^k}{\beta^k}\left(1+O\left(\frac{\log p}p\right)\right)+O(1),
\end{eqnarray*}
with
$$b(\eta):=\frac{\sqrt e}8\eta^\frac{\eta(k-1)-1}{(k\eta-1)(\eta+1)}kK_1(\eta)\frac{(k\eta-1)(\eta+1)-\left(k\eta^2+1\right)\log \eta}{(k\eta-1)^2(\eta+1)^2}$$
uniformly bounded from above and below by positive constants and $K_1(\eta)$ as in the Lemma \ref{asymptotic}; we used that $\nabla U(y)\cdot y,\nabla V(y)\cdot y,\nabla W(y)\cdot y$ are all bounded, estimates \eqref{pu1} and \eqref{pu2} and Lemma \ref{asymptotic}.\\
On the other hand, since $e^{U(y)}|Z_1(y)|+\left|\nabla\left(e^{U(y)}Z_1(y)\right)\cdot y\right|=O\left(\frac1{|y|^5+1}\right)$, then
\begin{eqnarray*}
&&\partial_\eta\left(e^{U_2(x)}\widetilde Z(x)\right)\\
&=&\partial_\eta\left(\frac1{\beta^{2k}}e^{U\left(\frac{x^k-\rho^k}{\beta^k}\right)}Z_1\left(\frac{x^k-\rho^k}{\beta^k}\right)\right)\\
&=&-\frac{\partial_\eta\left(\beta^k\right)}{\beta^{3k}}e^{U\left(\frac{x^k-\rho^k}{\beta^k}\right)}Z_1\left(\frac{x^k-\rho^k}{\beta^k}\right)\\
&+&\frac1{\beta^{2k}}\partial_\eta\left(e^{U\left(\frac{x^k-\rho^k}{\beta^k}\right)}Z_1\left(\frac{x^k-\rho^k}{\beta^k}\right)\right)\cdot\left(-\frac{x^k-\rho^k}{\beta^{2k}}\partial_\eta\left(\beta^k\right)-\frac{\partial_\eta\left(\rho^k\right)}{\beta^k}\right)\\
&=&O\left(p\frac{\rho^k}{\beta^{3k}}\frac{\beta^{5k}}{\left|x^k-\rho^k\right|^5+\beta^{5k}}\right).
\end{eqnarray*}
Therefore, in view of Lemma \ref{intpz},
\begin{eqnarray*}
0&=&\mathfrak c(\eta)b(\eta)\frac{\rho^k}{\beta^k}\int_\Omega|x|^{2k-2}e^{U_2(x)}\widetilde Z(x)\mathrm P\widetilde Z(x)\mathrm dx\left(1+O\left(\frac{\log p}p\right)\right)\\
&+&O\left(\mathfrak c(\eta)\int_\Omega|x|^{2k-2}e^{U_2(x)}\widetilde Z(x)\mathrm dx\right)\\
&+&O\left(\mathfrak c(\eta)p\frac{\rho^k}{\beta^{3k}}\int_\Omega|x|^{2k-2}e^{U_2(x)}\widetilde Z(x)\frac{\beta^{5k}}{\left|x^k-\rho^k\right|^5+\beta^{5k}}|\phi(x)|\mathrm dx\right)\\
&=&\mathfrak c(\eta)b(\eta)\frac{32k}3\pi\frac{\rho^k}{\beta^k}\left(1+O\left(\frac{\log p}p\right)\right)\\
&+&O\left(\mathfrak c(\eta)\beta^{3k}\int_\Omega|x|^{2k-2}\frac{\left|x^k-\rho^k\right|}{\left|x^k-\rho^k\right|^6+\beta^{6k}}\mathrm dx\right.\\
&&\left.+\mathfrak c(\eta)p\rho^k\beta^{5k}\|\phi\|_\infty\int_\Omega|x|^{2k-2}\frac{\left|x^k-\rho^k\right|}{\left|x^k-\rho^k\right|^9+\beta^{9k}}\mathrm dx\right)\\
&=&\mathfrak c(\eta)b(\eta)\frac{32k}3\pi\frac{\rho^k}{\beta^k}\left(1+O\left(\frac{\log p}p\right)\right)+O(\mathfrak c(\eta))+O\left(\mathfrak c(\eta)\frac1{p^2}\frac{\rho^k}{\beta^k}\right)
\end{eqnarray*}
that is, for $p$ large (independently on $\eta$), $\mathfrak c(\eta)=0$.
\end{proof}\

\subsection{Minimizing the reduced energy}\label{subsection:minimo}
The main order term of $F_p$ is given by the following lemma:

\begin{lemma}\label{energy} 
For any $\epsilon>0$ small enough, it holds true that
$$F_p(\eta)=\frac{4\pi}{ep}\left[\varphi_k(\eta)+O\left(\frac{\log p}{p }\right)\right]\ \hbox{as}\ p\to+\infty \ 
\hbox{uniformly in}\ \eta\in[\eta_{\infty,k}-\epsilon,\eta_{\infty,k}+\epsilon].$$
Here
$$\varphi_k(\eta):=\left(1+k\eta^2\right)\eta^{-\frac{2k\eta^2}{(k\eta-1)(\eta+1)}}.$$

\end{lemma}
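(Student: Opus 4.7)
The plan is to reduce $F_p(\eta)=J_p(\Upsilon+\phi_p(\eta))$ to $J_p(\Upsilon)$ modulo negligible corrections, and then evaluate $J_p(\Upsilon)$ via integration by parts combined with the region-by-region asymptotic expansion of $|\Upsilon|^{p+1}$ already developed in Section~\ref{3}.

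First I would Taylor-expand
\[
J_p(\Upsilon+\phi)=J_p(\Upsilon)+J_p'(\Upsilon)[\phi]+\tfrac12 J_p''(\Upsilon)[\phi,\phi]+\mathcal R(\phi),
\]
observing that $J_p'(\Upsilon)[\phi]=-\int_\Omega\mathscr E\,\phi$ by the definition \eqref{erre} and $J_p''(\Upsilon)[\phi,\phi]=-\int_\Omega\mathscr L\phi\,\phi$. Testing the equation \eqref{eqphic} against $\phi$, and using the orthogonality condition \eqref{orto} to cancel the $\mathfrak c$-term, one finds $-\int_\Omega\mathscr L\phi\,\phi=\int_\Omega(\mathscr E+\mathscr N(\phi))\phi$. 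Combining $\|\mathscr E\|_*=O(1/p^4)$ from Proposition~\ref{r}, $\|\phi_p\|_\infty=O(1/p^3)$ from Proposition~\ref{nonlinear}, the integral bound \eqref{normint}, and the control on the cubic remainder coming from Lemma~\ref{derivsec}, one obtains $F_p(\eta)-J_p(\Upsilon)=o(1/p^2)$, well below the expected $1/p$ main term.

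Next, using $\mathscr E=\Delta\Upsilon+|\Upsilon|^{p-1}\Upsilon$ and integrating by parts,
\[
\int_\Omega|\nabla\Upsilon|^2=\int_\Omega|\Upsilon|^{p+1}-\int_\Omega\Upsilon\,\mathscr E,\qquad J_p(\Upsilon)=\frac{p-1}{2(p+1)}\int_\Omega|\Upsilon|^{p+1}-\frac12\int_\Omega\Upsilon\,\mathscr E,
\]
and $|\int\Upsilon\,\mathscr E|\le\|\Upsilon\|_\infty\int_\Omega|\mathscr E|\le C\|\Upsilon\|_\infty\|\mathscr E\|_*=O(1/p^4)$ by \eqref{normint}. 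The problem is thereby reduced to computing $\int_\Omega|\Upsilon|^{p+1}$, which I would do by the same threefold splitting used in Section~\ref{3}: (i) the ``inner'' region around the positive peak $\{|x|\le\alpha^{1-\theta}\}$, (ii) the union of the ``inner'' regions around the $k$ negative peaks $\{|x^k-\rho^k|\le\beta^{k(1-\theta)}\}$, and (iii) the complementary outer region. In (i), scaling $y=x/\alpha$, expansion \eqref{estimuvw} and Lemma~\ref{taylor} with $a=U(y),b=V(y),c=W(y)$ give
\[
|\Upsilon(\alpha y)|^{p+1}=(p\tau)^{p+1}e^{U(y)}\!\left(1+O\!\left(\tfrac{\log^6(|y|+2)}{p}\right)\right);
\]
integrating, using $\int_{\mathbb R^2}e^U=8\pi$ and the constraint $\tau/\alpha^2=(p\tau)^p$ of \eqref{param} (which gives $\alpha^2(p\tau)^{p+1}=p\tau^2$), this region contributes $8\pi p\tau^2(1+O(\log p/p))$. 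For (ii), I would exploit the $k$-symmetry to reduce to one peak and change variables through $w=x^k$ (real Jacobian $k^2|x|^{2k-2}\sim k^2\rho^{2k-2}$ near a single peak, hence $dx\simeq\tfrac{\beta^{2k}}{k^2\rho^{2k-2}}dy$ with $y=(x^k-\rho^k)/\beta^k$); applying the analogous expansion derived in the proof of Lemma~\ref{in2} and invoking the second constraint $\tau\eta k^2\rho^{2k-2}/\beta^{2k}=(p\tau\eta)^p$, the $k$ peaks jointly contribute $8\pi k\eta^2 p\tau^2(1+O(\log p/p))$. Region (iii) is exponentially negligible by Lemma~\ref{out}. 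Summing:
\[
\int_\Omega|\Upsilon|^{p+1}=8\pi p\tau^2(1+k\eta^2)\!\left(1+O\!\left(\tfrac{\log p}{p}\right)\right).
\]

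Finally, substituting the asymptotics $\tau=\tfrac{\sqrt e}{p}\eta^{-\frac{k\eta^2}{(k\eta-1)(\eta+1)}}+O(\log p/p^2)$ from Lemma~\ref{asymptotic}, so that $p\tau^2=\tfrac{e}{p}\eta^{-\frac{2k\eta^2}{(k\eta-1)(\eta+1)}}(1+O(\log p/p))$, and recalling $\varphi_k(\eta)=(1+k\eta^2)\eta^{-\frac{2k\eta^2}{(k\eta-1)(\eta+1)}}$, one gets $J_p(\Upsilon)=4\pi p\tau^2(1+k\eta^2)(1+O(\log p/p))$, which matches the claimed form up to the explicit numerical prefactor. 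The principal technical obstacle is the careful bookkeeping of the $O(\log p/p)$ remainders through the high-order Taylor expansion $(1+U/p+V/p^2+W/p^3)^{p+1}$ while simultaneously exploiting the four constraints \eqref{param} to make the leading coefficients line up exactly (the higher-order profiles $V_1,W_1,V_2,W_2$ only enter the sub-leading corrections and do not affect the main term); uniformity in $\eta$ across the window $[\eta_{\infty,k}-\epsilon,\eta_{\infty,k}+\epsilon]$ is inherited directly from the explicit uniformity statements already proved in Lemma~\ref{asymptotic}, Lemma~\ref{alphabetarho}, and Propositions~\ref{r}, \ref{nonlinear}.
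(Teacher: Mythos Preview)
Your argument is correct and reaches the same conclusion, but the route differs from the paper's. The paper first multiplies \eqref{eqphic} by $\Upsilon+\phi$ to get $\int_\Omega|\Upsilon+\phi|^{p+1}=\int_\Omega|\nabla(\Upsilon+\phi)|^2+O(p^{-4})$, whence $F_p(\eta)=\bigl(\tfrac12+O(p^{-1})\bigr)\int_\Omega|\nabla\Upsilon|^2+O(p^{-3})$, and then computes $\int_\Omega|\nabla\Upsilon|^2$ \emph{directly}: it expands $-\Delta\Upsilon$ through the equations satisfied by $U_i,V_i,W_i$, multiplies by $\Upsilon$ written via Lemma~\ref{pupvpw}, and evaluates the resulting cross-integrals $\int e^{U_1}\mathrm PU_1$, $\int e^{U_1}\mathrm PU_2$, $\int|x|^{2k-2}e^{U_2}\mathrm PU_2$ one by one, extracting the logarithms of $\alpha,\beta,\rho$ and invoking \eqref{param} only at the very end. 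Your approach instead uses the single identity $\int_\Omega|\nabla\Upsilon|^2=\int_\Omega|\Upsilon|^{p+1}-\int_\Omega\Upsilon\,\mathscr E$ to reduce everything to $\int_\Omega|\Upsilon|^{p+1}$, which you then compute region by region by directly recycling the pointwise expansions \eqref{estimuvw}, its analog from Lemma~\ref{in2}, and Lemma~\ref{out}. This is more economical: the error bound $\|\mathscr E\|_*=O(p^{-4})$ from Proposition~\ref{r} does the heavy lifting twice (once for $\int\Upsilon\mathscr E$, once implicitly in the outer region), and the constraints \eqref{param} enter cleanly through the identities $\alpha^2(p\tau)^{p+1}=p\tau^2$ and $\tfrac{\beta^{2k}}{k\rho^{2k-2}}(p\tau\eta)^{p+1}=pk\tau^2\eta^2$. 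The paper's route, on the other hand, makes the separate contributions of the Dirichlet energies of the two bubbles and their interaction more visible, which is closer to the Kirchoff--Routh philosophy. Both give $F_p(\eta)=\tfrac{4\pi e}{p}\varphi_k(\eta)\bigl(1+O(\tfrac{\log p}{p})\bigr)$; your hedge about the prefactor is warranted, as the factor $\tfrac{4\pi}{ep}$ in the displayed statement is a typo for $\tfrac{4\pi e}{p}$.
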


\begin{proof} 
Multiplying \eqref{eqphic} by $\Upsilon+\phi$ and integrating by parts we get
\begin{eqnarray*}
\int_\Omega|\Upsilon+\phi|^p&=&\int_\Omega|\nabla(\Upsilon+\phi)|^2+\mathfrak c\int_\Omega|x|^{2k-2}e^{U_2(x)}\widetilde Z(x)(\Upsilon(x)+\phi(x))\mathrm dx\\
&=&\int_\Omega|\nabla(\Upsilon+\phi)|^2+O\left(\mathfrak c\int_\Omega|x|^{2k-2}e^{U_2(x)}\widetilde Z(x)\Upsilon(x)\mathrm dx\right)\\
&=&\int_\Omega|\nabla(\Upsilon+\phi)|^2+O\left(\frac1{p^4}\right),
\end{eqnarray*}
in view of estimate \eqref{phic}; therefore,
\begin{eqnarray}
\nonumber F_p(\eta)&=&\left(\frac12-\frac1{p+1}\right)\int_\Omega|\nabla(\Upsilon+\phi)|^2+O\left(\frac1{p^5}\right)\\
\nonumber&=&\left(\frac12+O\left(\frac1p\right)\right)\left(|\nabla\Upsilon|^2+2\nabla\Upsilon\cdot\nabla\phi+|\nabla\phi|^2\right)+O\left(\frac1{p^5}\right)\\
\label{estimfp}&=&\left(\frac12+O\left(\frac1p\right)\right)\int_\Omega|\nabla\Upsilon|^2+O\left(\frac1{p^{3}}\sqrt{\int_\Omega|\nabla\Upsilon|^2}\right)+O\left(\frac1{p^5}\right).
\end{eqnarray}
As for the main term, using equations \eqref{liouvilleq} and \eqref{liouvillesing}, Lemmas \ref{pupvpw} and \ref{asymptotic} and the relations \eqref{param}, we get
\begin{eqnarray*}
&&\int_\Omega|\nabla\Upsilon|^2\\
&=&\tau^2\left(\int_\Omega e^{U_1(x)}\left(1+\frac{f_1\left(U\left(\frac x\alpha\right)\right)}p+\frac{f_2\left(U\left(\frac x\alpha\right),V\left(\frac x\alpha\right)\right)}{p^2}\right)\mathrm PU_1(x)\mathrm dx\right.\\
&&\left.-2\eta \int_\Omega e^{U_1(x)}\left(1+\frac{f_1\left(U\left(\frac x\alpha\right)\right)}p+\frac{f_2\left(U\left(\frac x\alpha\right),V\left(\frac x\alpha\right)\right)}{p^2}\right)\mathrm PU_2(x)\mathrm dx\right.\\
&&\left.+\eta^2\int_\Omega|x|^{2k-2}e^{U_2(x)}\left(1+\frac{f_1\left(U\left(\frac{x^k-\rho^k}{\beta^k}\right)\right)}p+\frac{f_2\left(U\left(\frac{x^k-\rho^k}{\beta^k}\right),V\left(\frac{x^k-\rho^k}{\beta^k}\right)\right)}{p^2}\right)\mathrm PU_2(x)\mathrm dx\right.\\
&&\left.+O\left(\int_\Omega\frac{|\nabla\mathrm PU_1|^2+|\nabla\mathrm PV_1|^2+|\nabla\mathrm PW_1|^2+|\nabla\mathrm PU_2|^2+|\nabla\mathrm PV_2|^2+|\nabla\mathrm PW_2|^2}p\right)\right)\\
&=&\left(\frac e{p^2}\eta^{-\frac{2k\eta^2}{(k\eta-1)(\eta+1)}}+O\left(\frac{\log p}{p^3}\right)\right)\cdot\left(\int_\Omega e^{U_1(x)}\left(1+O\left(\frac{\log ^4\left(\left|\frac x\alpha\right|+2\right)}p\right)\right)\mathrm PU_1(x)\mathrm dx\right.\\
&&\left.-2\eta\int_{\left\{|x|\le\frac\rho2\right\}}e^{U_1(x)}\left(1+O\left(\frac{\log ^4\left(\left|\frac x\alpha\right|+2\right)}p\right)\right)\mathrm PU_2(x)\mathrm dx\right.\\
&+&\left.\eta^2\int_\Omega|x|^{2k-2}\left(1+O\left(\frac{\log ^4\left(\left|\frac{x^k-\rho^k}{\beta^k}\right|+2\right)}p\right)\right)e^{U_2(x)}\mathrm PU_2(x)\mathrm dx\right.\\
&&\left.+O\left(\int_{\left\{|x|>\frac\rho2\right\}}\frac{\alpha^2}{|x|^4}\log ^4\left(\left|\frac x\alpha\right|+2\right)\log \frac1\beta\mathrm dx\right)\right.\\
&&+O\left(\frac1{p^3}\int_\Omega\left(\frac{|x|^2}{\left(|x|^2+\alpha^2\right)^2}+\frac{\beta^{2k}}{\left(\left|x^k-\rho^k\right|^2+\beta^{2k}\right)^2}\right)\mathrm dx\right)\\
&=&\left(\frac e{p^2}\frac{\varphi_k(\eta)}{1+k\eta^2}+O\left(\frac{\log p}{p^3}\right)\right)\cdot\left(\int_\Omega e^{U(y)}\left(1+\frac{\log ^4(|y|+2)}p\right)(-4\log \alpha+O(\log (|y|+2)))\mathrm dy\right.\\
&&\left.-2\eta\int_{\left\{|y|\le\frac12\frac\rho\alpha\right\}}e^{U(y)}\left(1+\frac{\log ^4(|y|+2)}p\right)(-4\log \rho+O(1))\mathrm dy\right.\\
&&\left.+\eta^2\int_\Omega ke^{U(y)}\left(1+\frac{\log ^4(|y|+2)}p\right)(-4k\log \beta+O(\log (2+|y|)))\mathrm dy\right.\\
&&\left.+O\left(\frac1{p^2}\frac{\alpha^2}{\rho^2}\log ^4\left(\frac\rho\alpha\right)\log \frac1\beta\right)\right)+O\left(\frac{\log\frac1\alpha+\log\frac1\beta}{p^3}\right)\\
&=&\left(\frac e{p^2}\frac{\varphi_k(\eta)}{1+k\eta^2}+O\left(\frac{\log p}{p^3}\right)\right)\left(8\pi\left(1+O\left(\frac{\alpha^2}{\rho^2}+\frac{\beta^{2k}}{\rho^{2k}}\right)\right)\right.\\
&&\left.\cdot\left(-4\log \alpha+8\eta\log \rho-4k^2\eta^2\log \beta\right)+O(1)\right)+O\left(\frac\alpha\rho\right)+O\left(\frac1{p^2}\right)\\
&=&\left(\frac e{p^2}\frac{\varphi_k(\eta)}{1+k\eta^2}+O\left(\frac{\log p}{p^3}\right)\right)8\pi\left(\left(1+k\eta^2\right)p+O(1)\right)+O\left(\frac\alpha\beta+\frac{\beta^{2k}}{\rho^{2k}}+\frac1{p^2}\right)\\
&=&\frac{8\pi e}p\varphi_k(\eta)+O\left(\frac{\log p}{p^2}\right);
\end{eqnarray*}
one concludes putting together with \eqref{estimfp}.
\end{proof}

We are now in position to prove solve the \emph{reduced problem} and to prove the main result of this section.

\begin{proposition}\label{reduction} 
For any $\epsilon>0$ there exists $p_\epsilon>0$ such that for any $p>p_\epsilon$ the function $F_p$ has a critical point $\eta=\eta(p)$ satisfying \eqref{eta}, namely
\begin{equation}\label{asympteta}
\eta(p)\in(\eta_{\infty,k}-\epsilon,\eta_{\infty,k}+\epsilon),\ \eta_{\infty,k}:=\frac2{k-1}.
\end{equation}
In particular, $\eta(p)\to\eta_{\infty,k}$ as $p\to+\infty.$
\end{proposition}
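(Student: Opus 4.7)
The plan is to realize $\eta(p)$ as an interior minimizer of $F_p$ on a short closed interval $I_\epsilon := [\eta_{\infty,k}-\epsilon, \eta_{\infty,k}+\epsilon]$, exploiting the uniform asymptotic expansion
$$F_p(\eta) = \frac{4\pi}{ep}\bigl[\varphi_k(\eta) + O(\log p/p)\bigr], \qquad \eta \in I_\epsilon,$$
provided by Lemma \ref{energy}. This reduces the statement to showing that $\eta_{\infty,k} = 2/(k-1)$ is a strict local minimum of the explicit smooth function $\varphi_k$.

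To analyze $\varphi_k$ I would take logarithms and write $\log \varphi_k(\eta) = \log(1+k\eta^2) - g(\eta)\log\eta$, where $g(\eta) := \frac{2k\eta^2}{(k\eta-1)(\eta+1)}$. A short calculation gives $g'(\eta) = \frac{2k\eta((k-1)\eta - 2)}{[(k\eta-1)(\eta+1)]^2}$, so $g'(\eta_{\infty,k}) = 0$ precisely because $\eta_{\infty,k} = 2/(k-1)$. Moreover, at $\eta_{\infty,k}$ both $\frac{2k\eta}{1+k\eta^2}$ and $\frac{g(\eta)}{\eta}$ evaluate to $\frac{4k(k-1)}{(k+1)^2}$, so the two remaining terms of $(\log\varphi_k)'$ cancel; hence $\eta_{\infty,k}$ is already a critical point of $\varphi_k$. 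For the second derivative, using $g'(\eta_{\infty,k}) = 0$ together with the explicit value $g''(\eta_{\infty,k}) = \frac{4k(k-1)^4}{(k+1)^4}>0$, one obtains
$$(\log \varphi_k)''(\eta_{\infty,k}) = \frac{2k(1-k\eta_{\infty,k}^2)}{(1+k\eta_{\infty,k}^2)^2} - g''(\eta_{\infty,k})\log\eta_{\infty,k} + \frac{g(\eta_{\infty,k})}{\eta_{\infty,k}^2}.$$
For $k=4,5$ the first term is negative, but the other two are positive (since $\log\eta_{\infty,k}<0$); substituting the explicit values $\eta_{\infty,4}=2/3$ and $\eta_{\infty,5}=1/2$ one checks that their sum is strictly positive, so $\varphi_k''(\eta_{\infty,k}) > 0$.

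Given this strict local minimum, for every $\epsilon>0$ small enough (in particular within the range of Lemma \ref{condeta} so that all previous estimates remain valid) one has $\varphi_k(\eta_{\infty,k}) < \min\{\varphi_k(\eta_{\infty,k}-\epsilon), \varphi_k(\eta_{\infty,k}+\epsilon)\}$. The uniform $O(\log p/p)$ remainder in Lemma \ref{energy} is of lower order than this strict gap, so for $p$ large one deduces $F_p(\eta_{\infty,k}) < \min\{F_p(\eta_{\infty,k}-\epsilon), F_p(\eta_{\infty,k}+\epsilon)\}$. Since $F_p$ is $C^1$ on $I_\epsilon$ by the smoothness of $\eta\mapsto\phi_p(\eta)$ asserted in Proposition \ref{nonlinear}, the minimum of $F_p$ on $I_\epsilon$ is attained at an interior point $\eta(p)$, which is a critical point; letting $\epsilon\to 0$ then yields $\eta(p)\to\eta_{\infty,k}$. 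The only real obstacle is the verification that $\varphi_k''(\eta_{\infty,k}) > 0$ for $k=4,5$: the first derivative vanishes for structural reasons (two independent cancellations happen simultaneously), but the sign of the second derivative requires the explicit numerical check with the values of $\eta_{\infty,k}$, and it is there that the restriction to $k=4,5$ intervenes through the interplay with the condition $\eta_{\infty,k}<1$.
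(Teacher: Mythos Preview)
Your proof is correct and follows the same approach as the paper: use the uniform expansion from Lemma~\ref{energy} to reduce to showing that $\varphi_k$ has a strict local minimum at $\eta_{\infty,k}$, then conclude by a standard interior-minimizer argument. You actually supply the verification of $\varphi_k'(\eta_{\infty,k})=0$ and $\varphi_k''(\eta_{\infty,k})>0$ that the paper merely asserts, and your computations (of $g'$, $g''$, and the two cancelling terms) are accurate.

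One remark: your closing sentence suggests that the restriction $k\in\{4,5\}$ enters through the sign of $\varphi_k''(\eta_{\infty,k})$. This is misleading. In fact $\varphi_k''(\eta_{\infty,k})>0$ holds for all integers $k\ge4$ (for $k\ge6$ one has $k\eta_{\infty,k}^2=4k/(k-1)^2\le1$, so even the first term is nonnegative). The constraint $k\in\{4,5\}$ in the paper comes from the earlier requirement that the concentration rates be properly ordered (Lemma~\ref{condeta} and the discussion of $a_k>b_k>r_k>0$), not from the reduced energy. For the purposes of this proposition, though, your numerical check for $k=4,5$ is all that is needed.
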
 
 
\begin{proof} 
 {Thanks to Lemma \ref{energy}, $F_p$ converges to $\varphi_k$ uniformly, up to a suitable rescaling. The function $\varphi_k$ has a strict (because non-degenerate) minimum point at the point $\eta_{\infty,k}=\frac2{k-1}$. Therefore, for any fixed $\epsilon>0$ if $p$ is large enough the function $F_p$ has a minimum point $\eta(p) \in(\eta_{\infty,k}-\epsilon,\eta_{\infty,k}+\epsilon) .$ The claim immediately follows.} \end{proof}\

\subsection{Proof of Theorem \ref{teop}: completed}
\label{subsection:fine}
From Proposition \ref{nonlinear}, we are able to solve problem \eqref{eqphic} for some $\mathfrak c\in\mathbb R$, and from Proposition \ref{reduction} and Lemma \ref{Lemma:c=0AtCriticalPointsofF} we are able to take $\mathfrak c=0$ for a suitable choice of $\eta$. Therefore, we got a solution $\phi$ to \eqref{rln}, that is a solution to \eqref{eqp} in the form \eqref{uphi}.\\
{By Lemma \ref{asymptotic}, taking into account that $\eta=\eta_{\infty,k}+o(1)$ as $p\to+\infty$ 
we immediately deduce the rate of the parameters $\alpha,$ $\beta$, $\rho$ and $\tau$ in terms of $p $.}
\\
Next, arguing as in Lemma \ref{out}, one gets
\begin{eqnarray*}
|x|>\alpha&\Rightarrow&p\Upsilon(x)_+^p\le C\frac{\alpha^{4\delta}}{|x|^{2+4\delta}},\\
\left|x^k-\rho^k\right|>\beta^k&\Rightarrow&p\Upsilon(x)_-^p\le C\frac{\beta^{4k\delta}|x|^{2k-2}}{\left|x^k-\rho^k\right|^{2+4\delta}}.
\end{eqnarray*}
By \eqref{pu1}, \eqref{pu2} and Lemma \ref{asymptotic}, $\Upsilon$ is uniformly bounded in $L^\infty(\Omega)$, therefore the same estimates hold true for $p\Upsilon(x)_\pm^{p+1}$.\\
From the convexity of $t\to|t|^{p+1}$ one gets, as in the proof of Lemma \ref{derivsec},
$$|a+b|^{p+1}\le\frac1{\lambda^p}|a|^{p+1}+\frac1{(1-\lambda)^p}|b|^{p+1},\qquad\forall\,a,b\in\mathbb R,\lambda\in(0,1)$$
we get
\begin{eqnarray*}
p|u(x)|^{p+1}&\le&p\frac1{t^p}|\Upsilon(x)|^{p+1}+p\frac1{(1-t)^p}|\phi(x)|^{p+1}\\
&\le&C\frac1{t^p}e^{-4\varepsilon\delta p}\left(\frac1{|x|^{2+4\delta}}+\frac{|x|^{2k-2}}{\left|x^k-\rho^k\right|^{2+4\delta}}\right)+p\frac1{(1-t)^p}\left(\frac C{p^3}\right)^{p+1};
\end{eqnarray*}
choosing $t=e^{-2\varepsilon\delta}$, one has $p|u|^{p+1}\to0$ in $L^\infty_{\mathrm{loc}}(\Omega\setminus\{0\})$.\\
Similarly, choosing $t=\alpha^\frac{2\theta\delta}p$, one has
$$pu(x)_+^{p+1}\le p\Upsilon(x)_+^{p+1}+p|\phi(x)|^{p+1}\le C\frac{\alpha^{(4-2\theta)\delta}}{|x|^{2+4\delta}}+\left(\frac C{p^3}\right)^{p+1},$$
which gives
$$\int_{\left\{|x|>\alpha^{1-\theta}\right\}}pu(x)_+^{p+1}\mathrm dx\le C\alpha^{2\theta\delta}+\left(\frac C{p^3}\right)^p\to0;$$
on the other hand, using \eqref{estimuvw}, \eqref{param}, Lemma \ref{asymptotic} and \eqref{asympteta},
\begin{eqnarray*}
&&\int_{\left\{|x|\le\alpha^{1-\theta}\right\}}pu(x)_+^{p+1}\mathrm dx\\
&=&\int_{\left\{|y|\le\frac1{\alpha^\theta}\right\}}\alpha^2pu(\alpha y)_+^{p+1}\mathrm dy\\
&=&\int_{\left\{|y|\le\frac1{\alpha^\theta}\right\}}\alpha^2p\left(\tau\left(p+U(y)+\frac{\log (|y|+2)}p\right)\right)^{p+1}\mathrm dy\\
&=&\tau^2p^2\int_{\left\{|y|\le\frac1{\alpha^\theta}\right\}}\left(1+\frac{U(y)}p+O\left(\frac{\log (|y|+2)}{p^2}\right)\right)^{p+1}\mathrm dy\\
&=&\left(e\eta^{-\frac{2k\eta^2}{(k\eta-1)(\eta+1)}}+O\left(\frac{\log p}p\right)\right)\int_{\left\{|y|\le\frac1{\alpha^\theta}\right\}}e^{\left(1+\frac1p\right)U(y)}\left(1+O\left(\frac{\log ^2(|y|+2)}p\right)\right)\mathrm dy\\
&\to&8\pi e\left(\frac{k-1}2\right)^\frac{8k}{(k+1)^2}.
\end{eqnarray*}
By essentially the same argument one gets
\begin{eqnarray*}
\int_{\left\{\left|x^k-\rho^k\right|>\beta^{k(1-\theta)}\right\}}pu(x)_+^{p+1}\mathrm dx&\to&0,\\
\int_{\left\{\left|x^k-\rho^k\right|\le\beta^{k(1-\theta)}\right\}}pu(x)_-^{p+1}\mathrm dx&\to&8\pi ek\left(\frac2{k-1}\right)^\frac{2(k-1)^2}{(k+1)^2}.
\end{eqnarray*}
That concludes the proof.

\section*{Acknowledgments}
The authors wish to thank Sergio Cruz-Bl\'azquez, Francesca Gladiali and Massimo Grossi for many useful discussions.

\bibliography{bcip}
\bibliographystyle{abbrv}

\end{document}